\theoremstyle{plain}
\newtheorem{theorem}{Theorem}
\newtheorem{definition}[theorem]{Definition}
\newtheorem{lemma}[theorem]{Lemma}
\newtheorem{proposition}[theorem]{Proposition}
\newtheorem{example}[theorem]{Example}
\newtheorem{remark}[theorem]{Remark}
\newcounter{mycount}
\newenvironment{numlist}{\begin{list}{\arabic{mycount}.}%
   {\usecounter{mycount}\labelwidth=1cm\itemsep 0pt}}{\end{list}}
\newenvironment{letlist}{\begin{list}{\rm(\alph{mycount})}%
   {\usecounter{mycount}\labelwidth=1cm\itemsep 0pt}}{\end{list}}
\numberwithin{equation}{section}
\numberwithin{theorem}{section}
\numberwithin{figure}{section}
\newcommand\HH{{\mathbb H}}
\newcommand\RR{{\mathbb R}}
\newcommand\PP{{\mathbb P}}
\newcommand\qq{\qquad}
\newcommand\q{\quad}
\newcommand\si{\sigma}
\newcommand\be{\beta}
\newcommand\al{\alpha}
\newcommand\Si{\Sigma}
\newcommand\g{\gamma}
\newcommand\Om{\Omega}
\newcommand\De{\Delta}
\newcommand\ol{\overline}
\newcommand\FF{{\mathbb F}}
\newcommand\sG{{\mathcal G}}
\newcommand\CC{{\mathbb C}}
\newcommand\sB{{\mathcal B}}
\newcommand\NN{{\mathbb N}}
\newcommand\ZZ{{\mathbb Z}}
\newcommand\VV{{\mathbb V}}
\newcommand\EE{{\mathbb E}}
\newcommand\sF{{\mathcal F}}
\newcommand\La{\Lambda}
\newcommand\la{\lambda}
\newcommand\eps{\epsilon}
\newcommand\ot{1-2\ }
\newcommand\resp{respectively}
\newcommand\deqd{\doteqdot}
\newcommand\lra{\leftrightarrow}
\newcommand\oo{\infty}
\newcommand\wt{\widetilde}
\newcommand\TT{{\mathbb T}}
\renewcommand\th{\theta}
\newcommand\de{\delta}
\newcommand\Pf{\mathrm{Pf}\, }
\newcommand\rv{\mathrm{v}}
\newcommand\re{\mathrm{e}}
\newcommand\bec{\be_{\mathrm{c}}}
\newcommand\es{\varnothing}
\newcommand\m{\mathrm{mix}}
\newcommand\e{\mathrm{ev}}
\newcommand\I{\mathrm{I}}
\newcommand\HnD{\HH_{n,\De}}
\newcommand\wh{\widehat}
\newcommand\sgn{\mathrm{sgn}}
\newcommand\mo{\mu_\oo}
\renewcommand\ell{l}
\newcommand\tr{\mathrm{Tr}}
\newcommand\Log{\mathrm{Log}}
\renewcommand\mod{{\ \textrm{mod}\ }}
\newcommand\ZNI{Z_n(I)}
\newcommand\sA{{\mathcal A}}
\title{Critical surface of the \ot model}
\author{Geoffrey R.\ Grimmett}
\address{Statistical Laboratory, Centre for
Mathematical Sciences, Cambridge University, Wilberforce Road,
Cambridge CB3 0WB, UK} 
\email{g.r.grimmett@statslab.cam.ac.uk}
\urladdr{\url{http://www.statslab.cam.ac.uk/~grg/}}
\author{Zhongyang Li}
\address{Department of Mathematics,
University of Connecticut,
Storrs, Connecticut 06269-3009, USA} 
\email{zhongyang.li@uconn.edu}
\urladdr{\url{http://www.math.uconn.edu/~zhongyang/}}
\begin{document}

\begin{abstract}
The \ot model on the hexagonal lattice is a model of statistical mechanics in which
each vertex is constrained to have degree either $1$ or $2$. There are three 
edge-directions, and three corresponding parameters $a$, $b$, $c$. It is
proved that, when $a \ge b \ge c > 0$, the  surface
given by $\sqrt a = \sqrt b + \sqrt c$ is critical. The proof hinges upon 
a representation of the partition function in terms of that of
a certain dimer model.  This dimer model may be studied
via the Pfaffian representation of Fisher, Kasteleyn, and Temperley. 
It is proved, in addition, that the two-edge correlation function converges exponentially 
fast with distance
when $\sqrt a \ne \sqrt b + \sqrt c$.
Many of the results may be extended to periodic models.
\end{abstract}

\date{28 June 2015, revised 1 June 2016, 17 February 2017}
\keywords{\ot model,  Ising model, dimer model, perfect matching, Kasteleyn matrix.}
\subjclass[2010]{82B20, 60K35, 05C70}

\maketitle

\section{Introduction and background}\label{sec:intro}

The \ot model on the hexagonal lattice was introduced by Schwartz and Bruck \cite{SB08}
as an intermediary in the calculation of the capacity of a constrained coding system.
They expressed the capacity via holographic reductions (see \cite{Val}) in terms
of the number of perfect matchings (or dimer configurations), and the latter may be
studied via the Pfaffian method of Fisher, Kasteleyn, and Temperley
\cite{F61,Kast61,TF61}. The \ot model may  be viewed as a model of statistical
mechanics of independent interest, and it is related to the Ising model and the dimer model. 
In the current paper, we study the
\ot model within this context, and we establish the exact form of
the associated critical curve.

A \ot configuration on the hexagonal lattice $\HH=(\VV,\EE)$ 
is a subset $F$ of edges such that every
vertex is incident with either one or two edges of $F$. There are three real parameters $a,b,c>0$,
which are associated with the three classes of edges of $\HH$.
The weight of a configuration on a finite region is the product over vertices $v$ of
one of $a,b,c$ chosen according to the edge-configuration at $v$. (See Figure
\ref{fig:sign}.) 

Through a sequence of transformations, the \ot model turns out to be linked to
an enhanced Ising model, a polygon model, and a dimer model. These connections are pursued
here, and in the linked paper \cite{GL7}. 
The main result (Theorem \ref{thm:main}) states in effect that, when $a\ge b, c>0$, the
surface given by $\sqrt a = \sqrt b + \sqrt c$ is critical. This is proved
by an analysis of the behaviour of the two-edge correlation function
$\langle \si_e\si_f\rangle$ as $|e-f|\to\oo$. The model is called \emph{uniform} if
$a=b=c=1$, and thus the uniform model is not critical in the above sense.

There has been major progress in recent years in the study of two-dimensional Ising
models via rhombic tilings and discrete holomorphic observables (see, for example, 
\cite{BdT12, ChelkS2, ChelkS, Ken04}).
There is a rhombic representation of the critical polygon model 
associated with the \ot model, and an associated
discrete holomorphic function, but this is not explored here. 

Certain properties of the underlying hexagonal lattice are utilized heavily in this work, such as
trivalence, planarity, and support of a $\ZZ^2$ action. It may be possible
to extend many of the results of this paper to certain other graphs with such properties, including
the Archimedean lattice  $(3, 12^2)$ and the square/octagon lattice $(4, 8^2)$.
Further extensions are possible to periodic models on hexagonal and other lattices. 
(See Remarks \ref{rem:-1}, \ref{rem:alt} and Section \ref{sec:lis2}.) 

It was shown already in \cite{ZL2} that a (geometric) phase transition exists for the \ot model on $\HH$. 
An \emph{$a$-cluster} is a connected set of vertices each having local weight $a$ (as above).
It was shown that there exists, a.s.\ with respect to any 
translation-invariant Gibbs measure,
no infinite path  of present edges. In contrast, for given $b$, $c$, there exists
no infinite $a$-cluster for small $a$, whereas such a cluster exists for large $a$.
The a.s.\ uniqueness of infinite `homogeneous' clusters was proved in \cite{ZL3}.

This paper is concentrated on the \ot model and its dimer representation.
A related representation involves the  polygon model on $\HH$, and 
the phase transition of the latter model is the subject of the 
linked paper \cite{GL7}. The polygon
representation is related to the high temperature
expansion of the Ising model, and results in an inhomogeneous
model that may  regarded as an extension of the $O(n)$ model with $n=1$;
see \cite{DPSS} for a recent reference to the $O(n)$ model.

The structure of the current work is as follows. The precise formulation
of the \ot model appears in Section \ref{sec:model}, and the main theorem
(Theorem \ref{thm:main}) is presented in Section \ref{sec:mainthm}.

The \ot model is coupled with an Ising model in Section \ref{partf}, in a manner
not dissimilar to the Edwards--Sokal coupling of the random-cluster model
(see \cite[Sect.\ 1.4]{G-RCM}). 
It may be transformed into a dimer model (see \cite{ZL2}) as described in
Section \ref{sec:dimer}.
In Section \ref{sec:free}, we gather some conclusions about infinite-volume 
free energy and infinite-volume measures
that are new for the \ot model. Theorem \ref{thm:main}
is proved in Sections \ref{sec:morepf}--\ref{sec:pf32} by an analysis using Pfaffians,
and further in Sections \ref{sec:pf-1} and \ref{sec:eecad}.
Section \ref{sec:periodic} is devoted to extensions of
the above results to periodic 1-2 and Ising models to which the
Kac--Ward approach of \cite{Lis} does not appear to apply.

\section{The \ot model}\label{sec:model}

Let $G=(V,E)$ be a finite graph. A \emph{\ot configuration} on $G$ is
a subset $F\subseteq E$ such that every $v \in V$ is incident to either one or two members
of $F$. The subset $F$ may be expressed as a vector in the space $\Si=\{-1,+1\}^E$
where $-1$ represents an absent edge and $+1$ a present edge. Thus the
space of \ot configurations may be viewed as the subset of $\Si$ containing
all vectors $\si$ such that
$$
\sum_{e\ni v} \si'_e \in \{1,2\}, \qq v \in V,
$$
where
\begin{equation}\label{eq:sigmap}
\si'(e) = \tfrac12(1+\si(e)).
\end{equation}
(In Section \ref{ssec1}, we will write $\Si^\re$ for $\Si$, in order
to distinguish it from a space of vertex-spins to be denoted $\Si^\rv$.)

\begin{figure}[htbp]
\centerline{\includegraphics*[width=0.45\hsize]{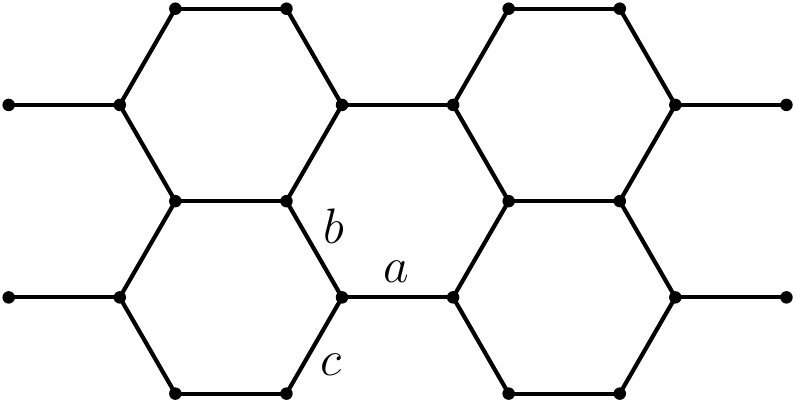}}
   \caption{An embedding of the hexagonal lattice. Horizontal edges
   are said to be of type $a$, NW edges of type $b$, and NE edges of type $c$.}
   \label{fig:hex}
\end{figure}

Suppose now that $G$ is a finite part of the hexagonal lattice $\HH$, suitably embedded in $\RR^2$,
see Figure \ref{fig:hex}.
The embedding is such that each edge may be viewed as one of: horizontal, NW, or NE. (Later we
shall consider a finite box with toroidal boundary conditions.) 
Let $a,b,c \ge 0$ be such that $(a,b,c)\ne(0,0,0)$, 
and associate these three parameters with the edges 
as indicated in the figure.  For $\si\in\Si$ and $v \in V$, let $\si|_v$ be
the sub-configuration of $\si$ on the three edges incident to $v$. 
There are $2^3=8$ possible local configurations,
which we encode as words of length three in the alphabet with letters $\{0,1\}$.
That is, for $v \in V$,
we observe the states $\si(e_{v,a}),\si(e_{v,b}), \si(e_{v,c})$,
where  
$e_{v,a}$, $e_{v,b}$, $e_{v,c}$ are the edges of type $a$, $b$, $c$ (\resp) incident to $v$.
The corresponding \emph{signature} $s_v$ is the word $\si'(e_{v,c})\si'(e_{v,b})\si'(e_{v,a})$ of length $3$,
where $\si'$ is given in \eqref{eq:sigmap}.
That is, the signature of $v$ is given as in Figure \ref{fig:sign}, together with the local weight
$w(\si|_v)$ associated with each of the eight possible signatures.

The hexagonal lattice $\HH$ is, of course, bipartite, and we colour the two
vertex-classes \emph{black} and \emph{white}. The upper diagrams of Figure
\ref{fig:sign} are for black vertices, and the lower for white vertices.

\begin{figure}[htbp]
\centerline{\includegraphics*[width=0.98\hsize]{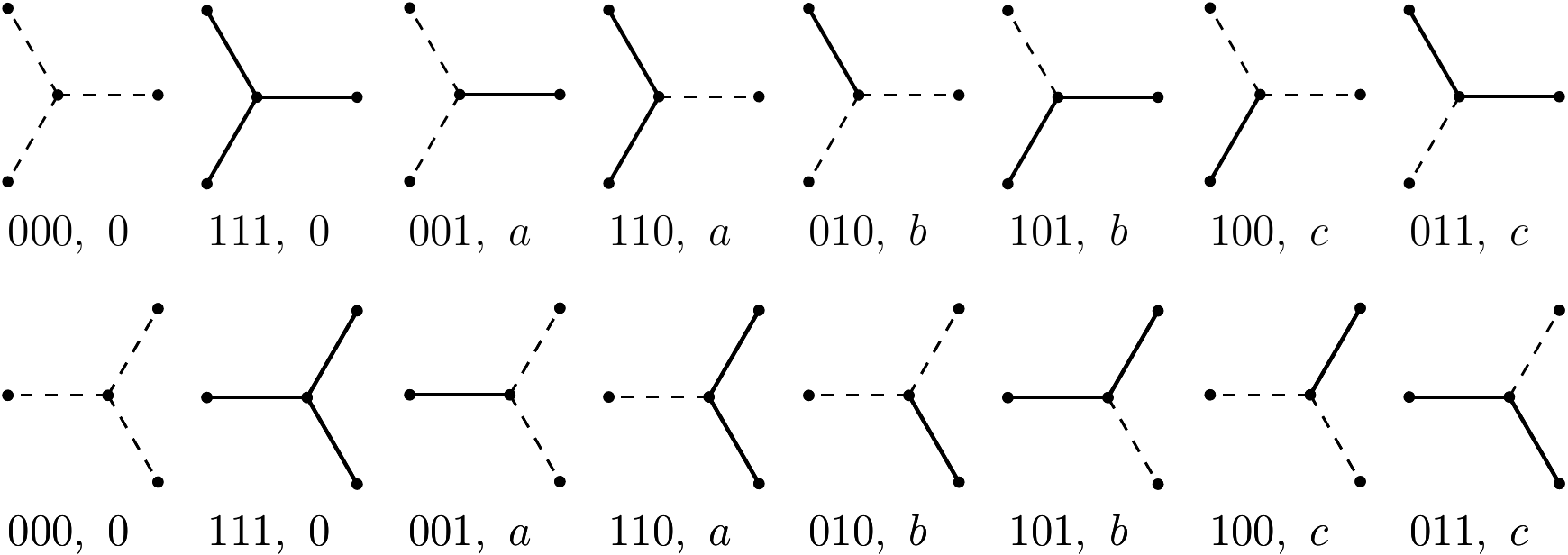}}
   \caption{The eight possible local configurations $\si|_v$ at a vertex $v$  in the two
   cases of \emph{black} and \emph{white} vertices. The signature
   of each is given, and also the local weight $w(\si|_v)$ associated with each instance.}
   \label{fig:sign}
\end{figure}

To the vector $\si\in\Si$, we assign the weight 
\begin{equation}\label{eq:pf-2}
w(\si) = \prod_{v\in V} w(\si|_v).
\end{equation}
These weights give rise to the partition
function
\begin{equation}\label{eq:pf-1}
Z=\sum_{\si\in\Si} w(\si),
\end{equation}
which leads in turn to the probability measure  
\begin{equation}\label{eq:pm}
\mu(\si) = \frac1Z  w(\si), \qq\si\in\Si.
\end{equation}
It is easily seen that the measure $\mu$ is invariant under
the mapping $(a,b,c)\mapsto (ka,kb,kc)$ with $k >0$.
It is therefore natural to re-parametrize the \ot model by
\begin{equation}\label{eq:ratio}
(a',b',c')=\frac{(a,b,c)}{\|(a,b,c)\|_2}.
\end{equation}

We will work mostly with a finite subgraph of $\HH$ subject to toroidal boundary conditions.
Let $n \ge 1$, and let $\tau_1$, $\tau_2$ be the two shifts of $\HH$, illustrated in Figure \ref{fig:hex0},
that map an elementary hexagon to the next hexagon in the given directions.
The pair $(\tau_1,\tau_2)$ generates a $\ZZ^2$ action on $\HH$, and we write $\HH_n$ for
the quotient graph of $\HH$ under the subgroup of $\ZZ^2$ generated 
by $\tau_1^n$ and $\tau_2^n$.  The resulting $\HH_n$ is illustrated in Figure
\ref{fig:hex0}, and may be viewed as a finite subgraph of $\HH$ subject to toroidal
boundary conditions.

\begin{figure}[htbp]
\centering
\scalebox{1}[1]{\includegraphics*[width=0.6\hsize]{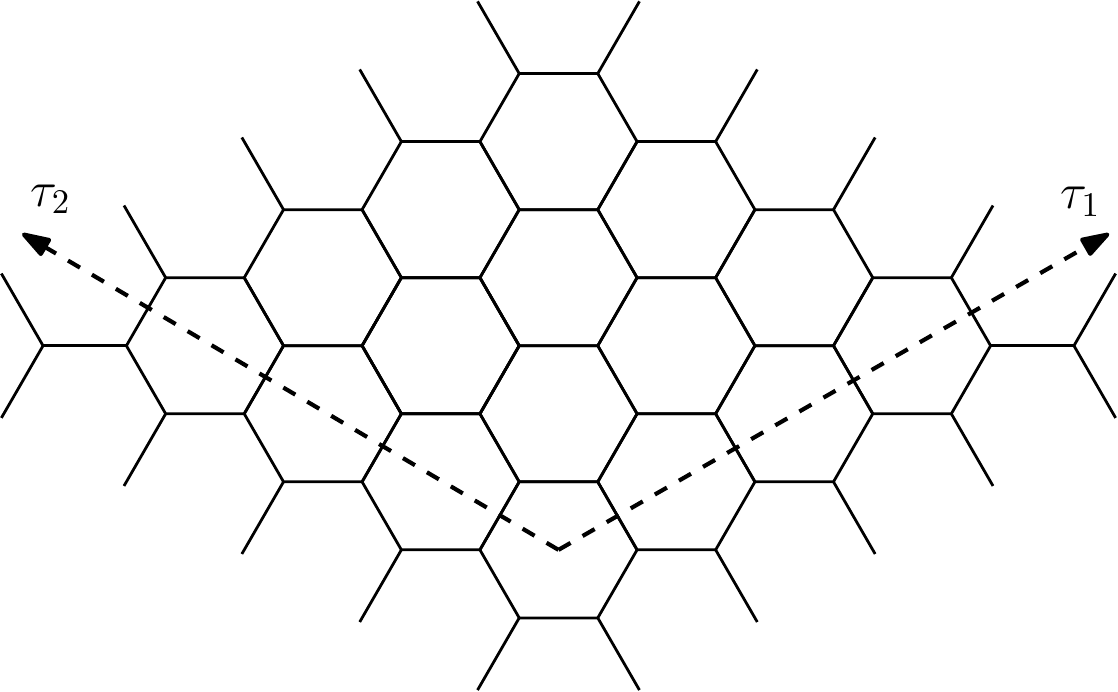}}
\caption{The graph $\HH_n$ is an $n\times n$ `diamond' wrapped onto a torus,
as illustrated here with $n=4$.}\label{fig:hex0}
\end{figure}

Our purpose in this paper is to study the \ot measure  \eqref{eq:pm} on $\HH_n$
in the infinite-volume limit as $n\to\oo$, and to identify its critical surface.
As an indicator of phase transition, we shall use the two-point function
$\langle \si_e\si_f\rangle_n$, where $e$, 
$f$ are two edges and $\langle \cdot\rangle_n$ denotes expectation.

We do not explore in detail the nature 
and multiplicity of infinite-volume measures in this paper.
There are certain complexities in such issues arising from the absence of
a correlation inequality, and some partial results along these lines may be found in \cite[Thm 0.1]{ZL2}.
These results are developed in Section \ref{sec:free}, where
the main result of current value is the existence of the 
infinite-volume limit of the toroidal \ot measure, see Theorem \ref{es}. 

\section{Main results}\label{sec:mainthm}

Consider the \ot model on 
$\HH_n$ with parameters $a,b,c >0$. 
We write $e=\langle x,y\rangle$ for the edge $e$ with endpoints $x$, $y$, 
and we use $\langle X\rangle_n$ to denote expectation of the random variable $X$ 
with respect to the probability measure of \eqref{eq:pm} on $\HH_n$. 
We shall make use of a measure of distance $|e-f|$ between $e$ and $f$, and it is largely immaterial
which measure we take. For definiteness, consider $\HH$ embedded in $\RR^2$ in the manner of Figure
\ref{fig:hex0}, with unit edge-lengths, and let $|e-f|$ be the Euclidean distance between their midpoints. 

We shall sometimes require the following 
geometric condition on two NW edges $e,f \in \EE$: 
\begin{equation}\label{eq:condition0}
\begin{aligned}
&\text{there exists a path $\pi=\pi(e,f)$ of $\HH_n$ from $e$ to $f$}\\
& \text{using only horizontal and NW half-edges}.
\end{aligned}
\end{equation}

\begin{theorem}\label{thm:main}
Let $a, b, c >0$, and $e,f\in\EE$.
\begin{letlist}
\item 
The limit $\langle \si_e\si_f\rangle=\lim_{n\to\oo}\langle \si_e\si_f\rangle_n$ exists.
\item \emph{Subcritical case.}
Let $a \ge b >0$ and $\sqrt a - \sqrt b < \sqrt c< \sqrt a + \sqrt b$. There exists
$\al(a,b,c)>0$ such that
\begin{equation}\label{eq:expfast}
|\langle \si_e\si_f\rangle| \le e^{-\al|e-f|}, \qq e,f\in\EE.
\end{equation}
\item \emph{Supercritical case.}
Let $a \ge b>0$, and let $e$, $f$ be NW edges satisfying \eqref{eq:condition0}. 
For almost every $c>0$ satisfying either $\sqrt a > \sqrt b + \sqrt c$ or $\sqrt{c}>\sqrt{a}+\sqrt{b}$,  we have that
$\lim_{|e-f|\to\oo}\langle \si_e\si_f\rangle^2$ exists and is non-zero.
The convergence is exponentially fast in the distance $|e-f|$.
\end{letlist}
\end{theorem}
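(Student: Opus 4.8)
\section*{Proof proposal for Theorem \ref{thm:main}}

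The plan is to reduce all three statements to the dimer model described in Section \ref{sec:dimer}, and then to exploit the Pfaffian (Fisher--Kasteleyn--Temperley) representation of dimer partition functions and correlations on the torus $\HH_n$. First I would recall the coupling of Section \ref{partf}: the \ot weight $w(\si)$ is rewritten, via the auxiliary Ising spins on $\Si^\rv$, so that $Z$ and the two-point quantities $\langle\si_e\si_f\rangle_n$ become ratios of dimer partition functions (or sums of a bounded number of such, coming from the four Kasteleyn orientations on the torus). The key object is the $2N\times 2N$ Kasteleyn matrix $K=K(a,b,c)$ of the associated dimer graph, whose entries are linear in $a,b,c$; after Fourier transform under the $\ZZ^2$ action, $K$ becomes a block-diagonal family of small matrices $K(z,w)$ indexed by characters $(z,w)$ on the unit torus, and $\det K(z,w)$ is an explicit Laurent polynomial $P(z,w)$ — the spectral (characteristic) curve of the model. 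The location of the phase transition is governed by whether $P$ vanishes on the unit torus $\{|z|=|w|=1\}$, and a direct computation should show that the real zero set of $P$ on the unit torus is nonempty precisely on the surface $\sqrt a=\sqrt b+\sqrt c$ (together with its reflections $\sqrt b=\sqrt a+\sqrt c$, $\sqrt c=\sqrt a+\sqrt b$). This is the heart of the matter and I expect it to be the main computational input.

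For part (a), existence of the limit, I would use the standard argument that the free energy per site $\frac1{n^2}\log Z_n$ converges, together with the fact (to be taken from Section \ref{sec:free}, e.g.\ Theorem \ref{es}) that the toroidal measures converge weakly; since $\si_e\si_f$ is a local bounded function, $\langle\si_e\si_f\rangle_n\to\langle\si_e\si_f\rangle$. Alternatively, and more in keeping with the Pfaffian method, one writes $\langle\si_e\si_f\rangle_n$ via inverse Kasteleyn entries $K^{-1}(x,y)$ and shows these converge as $n\to\oo$ to the infinite-volume inverse obtained by the contour integral $\frac1{(2\pi i)^2}\oint\oint (K(z,w))^{-1}_{xy}\,z^{-\cdots}w^{-\cdots}\,\frac{dz}{z}\frac{dw}{w}$, which is legitimate whenever the denominator $P(z,w)$ does not vanish on the unit torus; at the exceptional surface one needs the more delicate analysis of the later sections.

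For part (b), the subcritical case $\sqrt a-\sqrt b<\sqrt c<\sqrt a+\sqrt b$ (so strictly off the critical surface and its reflections), the spectral curve $P(z,w)$ has no zero on the unit torus, hence the infinite-volume inverse Kasteleyn entries $K^{-1}(x,y)$ decay exponentially in $|x-y|$: one shifts the contour of integration slightly off $|z|=|w|=1$, picking up a factor $\rho^{-|x-y|}$ with $\rho>1$. Expanding $\langle\si_e\si_f\rangle_n$ as a finite-size Pfaffian minor built from these entries, the leading $|e-f|$-independent terms cancel (this cancellation is exactly what the coupling is designed to produce), leaving a quantity bounded by $e^{-\al|e-f|}$ for some $\al>0$ depending on the distance from $(a,b,c)$ to the critical surface. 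The routine-but-necessary bookkeeping here is controlling the combinatorial prefactor in the Pfaffian expansion uniformly in $n$, and then letting $n\to\oo$.

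For part (c), the supercritical case, the situation is the genuinely two-dimensional-ordered one: now $\sqrt a>\sqrt b+\sqrt c$ (or the symmetric $\sqrt c>\sqrt a+\sqrt b$), and the relevant dimer model is in a frozen/ordered phase in the direction singled out by the NW edges. Here the geometric condition \eqref{eq:condition0} is used to express $\langle\si_e\si_f\rangle_n$ through a product over the path $\pi(e,f)$, and squaring removes a sign/orientation ambiguity inherent to the torus Pfaffian construction; one shows $\langle\si_e\si_f\rangle_n^2$ is, up to exponentially small corrections, the square of a ratio of two dimer partition functions with prescribed monomer/defect pairs at $e$ and $f$, and this ratio tends to a nonzero constant as $|e-f|\to\oo$ because the spectral curve now has a real node on the unit torus that is \emph{non-degenerate} for almost every $c$ (the ``almost every'' handling the measure-zero set of $c$ where the node degenerates or extra cancellations occur). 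The exponential rate of convergence again comes from contour deformation around the node. The main obstacle, and where I would spend most effort, is precisely this step: showing the limiting ratio is nonzero and identifying the exceptional null set of $c$ — this requires a careful local analysis of $P(z,w)$ near its real zero on the unit torus and an argument (e.g.\ via analyticity in $c$) that non-vanishing of the limit fails only for $c$ in a set of measure zero.
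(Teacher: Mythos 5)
Your outline of part (a) and of the reduction to the dimer/Kasteleyn framework matches the paper, and your identification of the critical surface via the zeros of $P(z,w)$ on the unit torus is correct (Proposition \ref{prop:Pzero}). But both of your phase-dependent arguments rest on a misreading of the geometry. For part (c): when $\sqrt a>\sqrt b+\sqrt c$ (or $\sqrt c>\sqrt a+\sqrt b$) the spectral curve does \emph{not} intersect the unit torus at all --- the intersection is nonempty only \emph{on} the critical surface, and even there it is a single real point of multiplicity $2$. There is no ``real node on the unit torus'' in the supercritical phase, so there is nothing to deform a contour around, and your proposed mechanism for both the nonzero limit and the exponential rate evaporates. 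What the paper actually does is represent $\langle\si_e\si_f\rangle$ as the Pfaffian of a truncated block Toeplitz matrix $Y_{2k}(1)+2cK_{V_\pi}^{-1}$ indexed by the $2k$ bisector edges along the path $\pi(e,f)$, apply Widom's theorem to get $\La=\lim_k\det T_k(\psi)=E(\psi)$, observe that $\La$ is analytic in $c$ off the two surfaces, and then pin down non-vanishing from the ground-state value $\La=1$ at $c=0$ (and at $a=b=0$): a non-trivial analytic function has only isolated zeros, and that isolated set is the ``almost every $c$'' exceptional set. The exponential rate is a separate theorem (Theorem \ref{ms8}) on the convergence rate of truncated block Toeplitz determinants with smooth symbol, not a saddle/node analysis.

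The same misreading undermines your part (b). Exponential decay of the infinite-volume inverse Kasteleyn entries holds whenever $P$ has no zero on $\TT^2$, i.e.\ in \emph{both} the subcritical and supercritical regimes; yet in the supercritical regime $\langle\si_e\si_f\rangle^2$ tends to a nonzero limit. So ``shift the contour, the entries decay, the leading terms cancel'' cannot by itself yield \eqref{eq:expfast}: the correlation is a Pfaffian of a matrix whose \emph{size grows} with $|e-f|$ and whose diagonal blocks $Y_1(1)$ do not decay, so decay of individual entries says nothing about the determinant's limit being zero. An additional input is needed to distinguish the phases, and the paper supplies it in one of two ways: (i) the Kac--Ward criterion of \cite{Lis} applied to the coupled (complex-weight) Ising model of Section \ref{ssec1}, checking that the three angles with tangents $|\eps_g^2|$ sum to less than $\tfrac12\pi$ exactly when $\sqrt a<\sqrt b+\sqrt c$ (plus a separate polygon-expansion argument for the degenerate case $a=b+c$); or (ii) in Section \ref{sec:eecad}, proving via the Toeplitz machinery only that the limit $\La$ \emph{exists} with exponential rate, and then showing $\La=0$ throughout the subcritical region by establishing it first on the smaller high-temperature region $a^2<b^2+c^2$ (a genuinely ferromagnetic Ising argument, Remark \ref{rem:subcrit}) and extending by analyticity. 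Your proposal contains neither ingredient, and without one of them the claimed cancellation is unsubstantiated --- indeed false as a general consequence of the spectral curve missing the torus.
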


The two-edge function $\langle \si_e\si_f\rangle$ behaves 
(when $a\ge b > 0$) in a qualitatively different
manner depending on whether or not $\sqrt a-\sqrt b < \sqrt c<\sqrt a+\sqrt b$. 
Here is a motivation for condition \eqref{eq:condition0}.
Consider the `ground states' when either $c=0$ or $a=b=0$. By examination  of
the different cases in Figure \ref{fig:sign}, we may see,
subject to \eqref{eq:condition0}, that 
\begin{equation}\label{eq:extreme}
\langle \si_e\si_f\rangle = 1\qq\text{if either}\q a,b>0,\ c=0,\q \text{or}\q a=b= 0,\ c>0.
\end{equation}
The result of part (c) will follow from this by an argument using analyticity
(and, moreover, the set of $c$ at which the conclusion of (c) fails is
a union of isolated points). 
Part (c) holds with $e$, $f$ assumed to be horizontal rather than NW.

Theorem \ref{thm:main} is not of itself 
a complete picture of the location of critical phenomena
of the \ot model, since the conditions on the parameters in part (c) are allied to the
direction of the vector from $e$ to $f$. 
(The direction NW is privileged in the above theorem. Similar results hold for
the other two lattice directions with suitable permutations of the parameters.) 
We have not ruled out the theoretical possibility of 
further critical surfaces in the parameter-space $[0,\oo)^3$.

\begin{remark}\label{rem:-1}
The quickest proof of Theorem \ref{thm:main}(b), the subcritical case,
 (given in Section \ref{sec:pf-1}) 
is based on a result of \cite{Lis} that imposes a condition on the
parameters of edges incident to a vertex $v$, uniformly in $v$. This
condition is
satisfied in the current setting (see Section \ref{sec:pf-1}).  In
the more general setting of certain periodic but non-constant families of parameters,
or possibly of the 1-2 model on other graphs such as the square/octagon lattice, 
much of Theorem \ref{thm:main} remains true, but the condition of \cite{Lis} does
not generally hold.  In  order to overcome this lacuna for more general systems,
we present a further proof of Theorem \ref{thm:main}(b) in Section \ref{sec:eecad}
(in the more general form of Theorem \ref{thm:main4})
using the dimer-related techniques of the proof of Theorem \ref{thm:main}.  
Such results may be extended in part to more general periodic settings, see
Section \ref{sec:lis2}.
\end{remark}

The proof of Theorem \ref{thm:main} utilizes 
a sequence of transformations between the \ot model
and the Ising and dimer models, as described in the
forthcoming sections. Theorem \ref{thm:main}(b) is proved in Section \ref{sec:pf-1}.
Most of the remaining proof is found 
in Section \ref{sec:morepf}, with the exponential rate of part (c)
proved in Section \ref{sec:pf32}. The last is proved via
a general result concerning the convergence rate of the determinants
of large truncated block Toeplitz matrices to their limit when the symbol is a smooth matrix-valued
function on the unit circle.

\section{Spin representations of the \ot model}\label{partf}

Two spin representations of the \ot model are presented here.
In the first, the \ot partition function is rewritten in terms
of edge-spins. The second is reminiscent of the random-cluster
representation of the Potts model. A further set of
spin-variables are introduced at the vertices of the graph,
together with an Ising-type partition function.  

\subsection{The \ot model as a spin system}

Let $\HH_n=(V_n,E_n)$ be the quotient hexagonal lattice embedded in the torus
in the manner of Figure \ref{fig:hex0}. 
Let $\Si_n=\{-1,+1\}^{E_n}$, where $-1$ (\resp, $+1$) represents an absent edge  (\resp, present edge). 

For $\si\in\Si_n$ and $v\in V_n$, let $\si_{v,a}$, $\si_{v,b}$, $\si_{v,c}$ denote the spins on the incident $a$-edge, $b$-edge,
$c$-edge of $v$. 
Two partition functions $Z$, $Z'$ 
generate the same measure whenever they differ only in a multiplicative factor 
(that is, their weight functions satisfy $w(\si)=cw'(\si)$
for  some $c \ne 0$ and all $\si\in\Si$), in which case we write $Z \deqd Z'$.  
We represent the \ot model as a spin system as follows.

\begin{proposition}\label{prop:12}
Let $a,b,c\ge 0$ such that $(a,b,c)\ne (0,0,0)$.
The \ot model with parameters $a$, $b$, $c$ on $\HH_n$ has partition function $Z_n$
satisfying $Z_n \deqd Z_n'$ where
\begin{equation}\label{pft}
Z_n':=\sum_{\si\in\Si_n}\,\prod_{v\in V_n}
\bigl(1+A\si_{v,b}\si_{v,c}+B\si_{v,a}\si_{v,c}+C\si_{v,a}\si_{v,b}\bigr),
\end{equation}
and
\begin{align}
A=\frac{a-b-c}{a+b+c},\q 
B=\frac{b-a-c}{a+b+c},\q 
C=\frac{c-a-b}{a+b+c}.\label{abc}
\end{align} 
\end{proposition}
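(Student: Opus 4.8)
The plan is to verify the identity vertex-by-vertex: since both the 1-2 weight $w(\si)=\prod_v w(\si|_v)$ and the proposed weight $\prod_v(1+A\si_{v,b}\si_{v,c}+B\si_{v,a}\si_{v,c}+C\si_{v,a}\si_{v,b})$ are products over vertices, and each factor depends only on the three edge-spins at that vertex, it suffices to show that for each vertex $v$ the local factor $1+A\si_{v,b}\si_{v,c}+B\si_{v,a}\si_{v,c}+C\si_{v,a}\si_{v,b}$ equals $\kappa\, w(\si|_v)$ for some constant $\kappa$ independent of $\si|_v$ (and independent of $v$, so that the global constant is $\kappa^{|V_n|}$, giving $Z_n' = \kappa^{|V_n|} Z_n$ and hence $Z_n'\deqd Z_n$). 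Here one must be slightly careful: the edge-spins are shared between adjacent vertices, but that is irrelevant for a purely algebraic identity of the two weight functions on $\Si_n$, since we only need the two integrands to agree (up to a fixed multiplicative constant) as functions of $\si$.

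First I would tabulate, from Figure \ref{fig:sign}, the eight local weights $w(\si|_v)$ as a function of the triple $(\si'(e_{v,a}),\si'(e_{v,b}),\si'(e_{v,c}))\in\{0,1\}^3$. The 1-2 constraint forbids the signatures $000$ (degree $0$) and $111$ (degree $3$), so $w(\si|_v)=0$ on those two configurations; on the remaining six, $w$ takes the value $a$, $b$, or $c$ according to which edge is the "odd one out" — reading off the figure, the degree-$1$ configurations $100,010,001$ and the degree-$2$ configurations $011,101,110$ get weights determined by the single distinguished edge. Next I would expand the candidate local factor $g(\si|_v):=1+A\si_{v,b}\si_{v,c}+B\si_{v,a}\si_{v,c}+C\si_{v,a}\si_{v,b}$, rewriting each $\si_{v,\bullet}=2\si'(e_{v,\bullet})-1\in\{\pm1\}$, and check that $g$ vanishes exactly on $000$ and $111$. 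Indeed $g$ is a multiaffine function of three $\pm1$ variables with no linear or cubic terms, so it is symmetric under global sign flip $\si|_v\mapsto-\si|_v$; requiring it to vanish at the all-plus configuration forces $1+A+B+C=0$, which one checks is exactly satisfied by \eqref{abc}, and then by symmetry it also vanishes at all-minus.

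The remaining, purely computational, step is to evaluate $g$ on the six admissible configurations and confirm the ratios match $a:b:c$ appropriately. For instance, on the configuration where only the $a$-edge is present (so $\si_{v,a}=+1$, $\si_{v,b}=\si_{v,c}=-1$), one gets $g=1+A-B-C$; using $1+A+B+C=0$ this equals $2A-2C$...

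— actually it is cleaner to note $g(\si|_v) = 1+A\si_b\si_c+B\si_a\si_c+C\si_a\si_b$ and, since exactly one of the degree-$1$/degree-$2$ pairs is selected by the "distinguished edge", to compute the four distinct values $1+A+B+C$, $1+A-B-C$, $1-A+B-C$, $1-A-B+C$ (the last three each occurring twice, for a present/absent pair), and verify they are proportional to $0$, $2a/(a{+}b{+}c)$, $2b/(a{+}b{+}c)$, $2c/(a{+}b{+}c)$ respectively, with common constant $\kappa=2/(a+b+c)$. Substituting \eqref{abc} makes each of these an elementary fraction identity. I expect no genuine obstacle here; the only thing requiring care is bookkeeping — correctly reading the weights and signatures off Figure \ref{fig:sign} (in particular the black/white distinction, which should drop out by the sign-flip symmetry of $g$) and matching the distinguished-edge convention to the roles of $A$, $B$, $C$. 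Finally, I would remark that the degenerate cases where some of $a,b,c$ vanish (but not all) are covered by the same computation, since $g$ remains well-defined and the constant $\kappa=2/(a+b+c)$ is still nonzero.
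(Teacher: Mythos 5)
Your proposal is correct and is essentially the paper's own proof: one checks that the local factor $1+A\si_{v,b}\si_{v,c}+B\si_{v,a}\si_{v,c}+C\si_{v,a}\si_{v,b}$ vanishes on the two forbidden local configurations (equivalently $1+A+B+C=0$) and equals a configuration-independent constant times the local weight $a$, $b$, or $c$ on the six admissible ones, e.g.\ $1+A-B-C=4a/(a+b+c)$. The only quibble is bookkeeping of the constant: the per-vertex factor is $4/(a+b+c)$ rather than your stated $\kappa=2/(a+b+c)$ (and the aborted intermediate ``$2A-2C$'' should be $-2B-2C$), but this is immaterial for the relation $Z_n\deqd Z_n'$.
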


\begin{proof}
By examination of \eqref{pft}, we see that a vertex with local configuration labelled $a$
in Figure \ref{fig:sign} has weight
$$
1+A-B-C = \frac{4a}{a+b+c},
$$
with similar expressions for vertices with the other possible signatures.
This is in agreement with \eqref{eq:pf-2}--\eqref{eq:pf-1}, and the claim follows.
\end{proof}

\subsection{Coupled Ising representation}\label{ssec1}

Let $A\HH_n=(A V_n, A E_n)$ be the graph derived 
from $\HH_n=(V_n,E_n)$ by adding
a vertex at the midpoint of each edge in $E_n$. Let $M E_n=\{M e: e \in E_n\}$ be the set
of such midpoints, and $A V_n = V_n \cup M E_n$.  The edges $A E_n$
are precisely the half-edges of $E_n$, each being of the form $\langle v, Me\rangle$
for some $v \in V_n$ and incident edge $e \in E_n$.  

We introduce an Ising-type model on the graph $A\HH_n$. The marginal
of the model on midpoints $ME_n$ is a \ot model, and the marginal on $V_n$ is an Ising model.
This enhanced Ising model is reminiscent of the coupling
of the Potts and random-cluster measures, see \cite[Sect.\ 1.4]{G-RCM}.
It is constructed initially via a weight function on configuration space,
and via the associated partition function. The weights may be complex-valued, and thus
there  does not always exist an associated probability measure. 

The better to distinguish between
$V_n$ and $M E_n$, we set $\Si_n^\re=\{-1,+1\}^{ME_n}$ as before, and $\Si_n^\rv =\{-1,+1\}^{V_n}$.
An edge $e\in E_n$ is identified with the element of 
$ME_n$ at its centre.
A spin-vector is a pair $(\si^\re,\si^\rv)\in \Si^\re \times \Si^\rv$ with 
$\si^\re=(\si_{v,s}: v\in V_n,\ s=a,b,c)$ and
$\si^\rv=(\si_v: v \in V_n)$,
to which we allocate the (possibly negative, or even complex)  weight 
\begin{equation}\label{eq:ss}
\prod_{v\in V_n}(1+\eps_a\si_v\si_{v,a})(1+\eps_b\si_v\si_{v,b})(1+\eps_c\si_v\si_{v,c}),
\end{equation}
where $\eps_a, \eps_b, \eps_c\in\CC$ are 
constants associated with horizontal, NW, and NE edges, respectively, 
and $\si_{v,a},\si_{v,b},\si_{v,c}$ denote the spins on midpoints of the corresponding 
edges incident to $v\in V_n$. If $u$ and $v$ are endpoints of the same edge 
$\langle u, v \rangle$ of $\HH_n$, then $\si_{u,a}=\si_{v,a}$. 
In \eqref{eq:ss}, each factor $1+\eps_s \si_v\si_{v,s}$ ($s=a,b,c$) 
corresponds to a half-edge of $\HH _n$.
Recalling that
\begin{equation}\label{eq:tanh}
e^{x\si_1\si_2}=(1+\si_1\si_2\tanh x)\cosh x, \qq x \in \RR,\ \si_1\si_2=\pm1,
\end{equation}
the above spin system is  
a ferromagnetic Ising model on $A\HH _n$ when $\eps_a,\eps_b,\eps_c\in(0,1)$.

\subsection{Marginal on the midpoints $ME_n$}\label{ssec2}

The partition function  of \eqref{eq:ss} is
\begin{equation}\label{eq:pfss}
\ZNI:=\sum_{\si^\re\in\Si_n^\re}\, \sum_{\si^\rv \in \Si_n^\rv}\, \prod_{v\in V_n}(1+\eps_a\si_v\si_{v,a})(1+\eps_b\si_v\si_{v,b})(1+\eps_c\si_v\si_{v,c}).
\end{equation}
(The notation $\ZNI$ is chosen for consistency with the polygon partition
function $Z_n(P)$ used in this article and imported from \cite{GL7}.)
The product, when expanded, is a sum of monomials in which each $\si_v$ has a power
between $0$ and $3$. On summing over $\si^\rv$, only terms with even powers
of the site-spins $\si_v$ survive, and furthermore $\si_v^2=1$, so that
\begin{equation*}
\ZNI=2^{|V_n|} \sum_{\si^\re\in\Si_n^\re}\,\prod_{v\in V_n}
\bigl(1+\eps_b\eps_c\si_{v,b}\si_{v,c}+\eps_a\eps_c\si_{v,a}\si_{v,c}+\eps_a\eps_b\si_{v,a}\si_{v,b}
\bigr).
\end{equation*}

Let $a,b,c>0$ be such that $ABC\ne 0$ where $A$, $B$, $C$ are given by \eqref{abc},
and let
\begin{equation}
\eps_a=\sqrt{\frac{BC}{A}},
\q \eps_b=\sqrt{\frac{AC}{B}},
\q\eps_c=\sqrt{\frac{AB}{C}}.
\label{ce}
\end{equation}
By \eqref{pft}, 
\begin{equation}\label{eq:new=}
\ZNI =2^{|V_n|}  Z_n',
\end{equation}
whence the marginal model of \eqref{eq:ss} on the 
midpoints of edges of $\HH _n$, subject to \eqref{ce},  is simply the \ot model
with parameters $a$, $b$, $c$. 

\subsection{Marginal on the vertices $V_n$}\label{ssec3}
This time we perform the sum over $\si^\re$ in \eqref{eq:pfss}.
Let $g=\langle u,v\rangle\in E_n$ be an edge with weight $\eps_g$.  We have
\begin{align}
\sum_{\si_g=\pm 1}\left(1+\eps_g\si_u\si_g\right)\left(1+\eps_g\si_v\si_g\right)
&=2\left(1+\eps_g^2\si_u\si_v\right),\label{ed}\\
\sum_{\si_g=\pm 1}\si_g\left(1+\eps_g\si_u\si_g\right)\left(1+\eps_g\si_v\si_g\right)
&=2\eps_g(\si_u+\si_v).\label{ed2}
\end{align}
By \eqref{eq:pfss} and \eqref{ed},
\begin{equation}\label{eq:pfss2}
\ZNI= 2^{|E_n|}\sum_{\si^\rv \in \Si_n^\rv} \prod_{g=\langle u,v\rangle\in E_n}
\left(1+\eps_g^2\si_u\si_v\right).
\end{equation}
By \eqref{eq:tanh}, this is the partition function of an Ising model on $\HH_n$ 
with (possibly complex) weights.

Let $e=\langle u,v\rangle$, $f=\langle x,y\rangle$ be distinct edges in $E_n$. 
Motivated by Section \ref{ssec2} and the discussion of the two-edge correlation 
$\langle \si_e\si_f\rangle_n$ of the \ot model,
we define
\begin{equation}\label{eq:pfss3}
\si(e,f) = \frac1{\ZNI } 
\sum_{\si^\re\in\Si_n^\re} \sum_{\si^\rv \in \Si_n^\rv} \si_e\si_f\prod_{v\in V_n}(1+\eps_a\si_v\si_{v,a})(1+\eps_b\si_v\si_{v,b})(1+\eps_c\si_v\si_{v,c}).
\end{equation}
By \eqref{ed2}--\eqref{eq:pfss2}, this equals
\begin{align}\label{eq:corr4}
&\frac1{\ZNI }2^{|E_n|}\sum_{\si^\rv \in \Si_n^\rv} 
\frac{\eps_e(\si_u+\si_v)\eps_f(\si_x+\si_y)}{\left(1+\eps_e^2\si_u\si_v\right)\left(1+\eps_f^2\si_x\si_y\right)}
\prod_{g=\langle u,v\rangle\in E_n}\left(1+\eps_g^2\si_u\si_v\right)\\
&\hskip2cm = \sum_{\si^\rv\in\Si_n^\rv} D_{e,f}(\si^\rv) w(\si^v)
\biggl/ \sum_{\si^\rv\in\Si_n^\rv}  w(\si^\rv),
\nonumber
\end{align}
where
\begin{align*}
w(\si^\rv) &= \prod_{g=\langle u,v\rangle\in E_n}\left(1+\eps_g^2\si_u\si_v\right),\\
D_{e,f}(\si^\rv) &= 
\frac{\eps_e(\si_u+\si_v)\eps_f(\si_x+\si_y)}
{\left(1+\eps_e^2\si_u\si_v\right)\left(1+\eps_f^2\si_x\si_y\right)}, \qq 
e=\langle u,v\rangle, \ f=\langle x,y\rangle.
\end{align*}
We interpret $D_{e,f}(\si^\rv)$ as $0$ when its denominator is $0$.
Since $\si_1+\si_2=0$ when $\si_1\si_2=-1$,  we may write
\begin{equation}
D_{e,f}(\si^\rv) = 
\frac{\eps_e(\si_u+\si_v)\eps_f(\si_x+\si_y)}
{(1+\eps_e^2)(1+\eps_f^2)}.\label{eecis0}
\end{equation}

If the weights $w(\si^\rv)$ are positive (which they are not in general), 
the ratio on the right side of \eqref{eq:corr4} may be interpreted as an expectation. 
This observation will be used in Section \ref{sec:pf-1}.

By inspection of \eqref{eq:pfss2}, if $\eps_g^2=\pm 1$ 
for some $g\in\{a,b,c\}$, then zero mass
is placed on configurations $\si$ for which there exists an edge $\langle u,v\rangle$
of type $g$ with $\si_u\si_v=\mp 1$. 
We turn to the special case of \eqref{ce} and \eqref{abc} with $ABC\ne 0$.
Then 
\begin{equation}\label{eq:eps=}
\eps_a^2=\begin{cases}
-1 &\text{if and only if } a^2=b^2+c^2,\\
1 &\text{if and only if } bc=0,\\
\end{cases}
\end{equation}
and similarly for $\eps_b$, $\eps_c$.
Note in this case that $\si(e,f)=\langle \si_e\si_f\rangle_n$, the two-edge function
for the associated \ot model.

\subsection{Proof of Theorem \ref{thm:main}(b)}\label{sec:pf-1}
Let $e=\langle u,v\rangle$, $f=\langle x,y\rangle$ be distinct edges in $E_n$ such that
$u$, $x$ are white and $v$, $y$ are black. By \eqref{eq:pfss3}--\eqref{eecis0},
\begin{equation}
\langle\si_e\si_f\rangle_n=
\big\langle D_{e,f}(\si^\rv)\big\rangle^\I_n,
\label{eeci}
\end{equation}
where $\langle \cdot \rangle_n^\I$ denotes expectation in the  Ising model
of \eqref{eq:pfss2}. 
Recall that this Ising model may have complex weights.

By \cite[Cor.\ 2.5]{Lis} and known results for the Kac--Ward operator
(see \cite{CCK,KLM,LisF}), we have that
$\langle\si_e\si_f\rangle:=\lim_{n\to\oo}\langle\si_e\si_f\rangle_n\to 0$
exponentially fast as $|e-f|\to\oo$, so long as the three acute angles with tangents
$|\eps_g^2|$, $g=a,b,c$, have sum $\th$ satisfying $\th<\frac12\pi$.

It suffices to assume that 
$a \ge b,c >0$ and $\sqrt a < \sqrt b + \sqrt c$. Suppose first that, in addition,
\begin{equation}\label{eq:not0}
a \ne b+c.
\end{equation}
Let $A$, $B$, $C$ be given by 
\eqref{abc} and \eqref{ce}, so that $A \in (-1,1)\setminus\{0\}$ and $B,C<0$. 
Note that the $\eps_g$ of \eqref{ce} are purely imaginary if $A<0$, 
and real otherwise. Now,
\begin{equation}\label{eq:iso}
\tan \th=|ABC|\frac{A^{-2}+B^{-2}+C^{-2}}{1-A^2-B^2-C^2},
\end{equation}
which is finite under \eqref{eq:not0} and strictly positive if 
\begin{equation}\label{eq:sum2}
A^2+B^2+C^2<1.
\end{equation}
Using \eqref{abc}, it is a short calculation to see that 
\eqref{eq:sum2} holds if 
$a^2+b^2+c^2-2ab-2bc-2ca < 0$, which is indeed valid when
$\sqrt a < \sqrt b + \sqrt c$. (See also the proof of Proposition \ref{prop:Pzero}.)
This establishes \eqref{eq:expfast} subject to \eqref{eq:not0}.

Suppose finally that $a=b+c$, so that $A=0$ and $B,C<0$, $B+C=-1$.
It is useful to represent the 1-2 model as a polygon model, via its high-temperature expansion.
As explained in \cite{GL7}, the two-edge function satisfies 
$$
\langle\si_e\si_f\rangle_n=\frac{Z_{n,e\lra f}}{Z_n(P)},
$$
where $Z_{n, e\lra f}$ and $Z_n(P)$ are given at \cite[eqns (2.3), (2.7)]{GL7}
with
$$
\eps_b\eps_c=A, \q \eps_a\eps_c=B, \q \eps_a\eps_b=C.
$$

For a polygon configuration $\pi$ (that is, a set of edges such that every vertex
has even degree), a vertex of $\HH_n$ is said to be of type $ab$ if it is incident to
two edges with types $a$ and $b$ (and similarly for $ac$ and $bc$).
Since each vertex in the polygon model has even degree, and $A=0$, no vertex of $\HH_n$
has type $bc$. Therefore, any polygon configuration with non-zero weight in $Z_n(P)$
is a disjoint union of cycles comprising 
$ac$-type and $ab$-type vertices. 
The vertices on such a cycle form consecutive pairs with the same type,
and each such pair contributes weight either $B^2$ or $C^2$. 
It follows that $Z_n(P)$ is a sum of positive weights. 

Suppose first that $e$ and $f$ are $a$-type (horizontal) edges.
Let $\pi'$ be a path between the midpoints
of  $e$ and $f$ that contributes a non-zero weight to $Z_{n,e\lra f}$,
and let $h$ be the number of its $a$-type edges (with each $a$-type
half-edge contributing $\frac12$). Then $\pi'$
contains exactly $2h$ vertices of $\HH_n$, 
which appear in consecutive pairs with the same type (either $ab$ or $ac$).
The product of the weights of the vertices of $\pi'$ is $B^{2v}C^{2(h-v)}$ ($>0$), 
where
$v$ is the number of consecutive pairs with type $ac$. We denote
by $T(h,v)$ the set of all such $\pi'$.

Since  the removal of $\pi'$ gives a configuration contributing to $Z_n(P)$,
and in addition $D:=B^2+C^2<1$,
\begin{align*}
0\le \langle\si_e\si_f\rangle_n &= \frac{Z_{n,e\lra f}}{Z_n(P)}
\le\frac1{Z_n(P)} \sum_{h,v} \,\sum_{\pi'\in T(h,v)} B^{2v}C^{2(h-v)}Z_n(P)\\
&\le  
2\sum_{h=H}^\oo \sum_{v=0}^h \binom hv B^{2v}C^{2(h-v)}
= \frac{2 D^H}{1-D},
\end{align*}
where $H=\inf\{h: T(h,v)\ne\es\text{ for some }v\}$. 
Since $H \ge c|e-f|$ for some $c>0$, the claim follows for horizontal $e$, $f$. 

If either $e$ or $f$ is not horizontal, an extra term appears 
at one or both of the ends of $\pi'$, and such a term contributes
a factor bounded by $|\max\{B,C\}|<1$.

\begin{remark}\label{rem:subcrit}
The conclusion \eqref{eq:expfast} of Theorem \ref{thm:main}(b) 
may be proved as follows 
subject to the more restrictive condition $a^2<b^2+c^2$.
Under this condition, we have that $A,B,C < 0$.
The graph $\HH _n$ is bipartite with vertex-classes coloured black and white
(see the discussion around Figure \ref{fig:sign}).
We  now reverse the signs of the spins of black vertices, thereby obtaining a ferromagnetic
Ising model. It is easily checked that this is a  high-temperature model (as in \eqref{eq:iso}),
and it follows that $\langle \si_e\si_f\rangle\to 0$ exponentially fast as
$|e-f|\to\oo$.   
\end{remark}

\begin{remark}\label{rem:alt}
A further proof of parts of Theorem \ref{thm:main} is presented in Section \ref{sec:eecad}.
This proof is based on `dimer' rather than `Ising' methods, and may be extended
to periodic 1-2 models which appear to be currently beyond the Kac--Ward techniques 
of \cite{Lis}. See Section \ref{sec:periodic}.
\end{remark}

\section{Dimer representation of the \ot model}
\label{sec:dimer}

\subsection{The decorated dimer model}\label{ssec:dec}

Let $\HnD=(V_{n,\De},E_{n,\De})$ 
be the decorated toroidal graph derived from $\HH_n$ and illustrated 
on the right of Figure \ref{fig:12con}.
It was shown in \cite{ZL2} that there is a correspondence between 
\ot configurations on $\HH_n$ and dimer configurations on $\HnD$. 
This correspondence is summarized in the figure caption, and a more detailed description follows.

\begin{figure}[htbp]
\centerline{\includegraphics[width=0.9\textwidth]{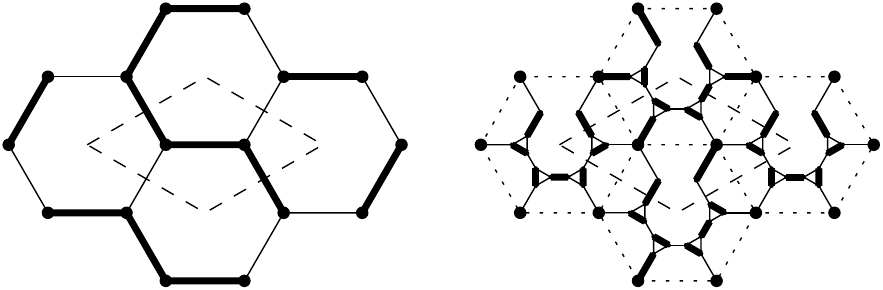}}
\caption{Part of a \ot configuration on $\HH_n$, and the corresponding dimer (sub)configuration
on $\HH_{n,\De}$.
When two edges with a common vertex of $\HH_n$ have the same state in the \ot model,
the corresponding `bisector edge' is present in the dimer configuration.
The states of the bisector edges determine the dimer configuration on the rest of $\HH_{n,\De}$.
The edges of $\HH_{n,\De}$ are allocated weights consistently with the 
\ot weights of Figure \ref{fig:sign}. The central lozenge of the right-hand figure is expanded in Figure \ref{fig:ofd0}.}
\label{fig:12con}
\end{figure}

Let $\si$ be a \ot configuration on $\HH_n$, and let $v \in V_n$ ($\subseteq V_{n,\De}$). 
The vertex $v$ has three incident edges in $\HH_{n,\Delta}$, which are bisectors of the three angles 
of $\HH_n$ at $v$. Such a \emph{bisector edge} is present in the dimer configuration 
on $\HnD$ if and only if the two 
edges of the corresponding angle have the same $\si$-state, that is, either both or neither are present. 
The states of the bisector edges determine the dimer configuration on the entire $\HH_{n,\Delta}$. 
Note that the \ot configurations $\si$ and $-\si$ generate the same dimer configuration, denoted $D_\si$.

To the edges of $\HnD$ we allocate weights as follows: edge $e=\langle i,j\rangle$ 
is allocated weight $w_{i,j}$ where
\begin{equation}\label{eq:weights}
w_{i,j} = 
\begin{cases} a &\text{if $e$ is a horizontal bisector edge,}\\
 b &\text{if $e$ is a NW bisector edge,}\\ 
 c &\text{if $e$ is a NE bisector edge,}\\
1 &\text{otherwise}.
\end{cases}
\end{equation}
The weight of a dimer configuration
is the product of the weights of present edges.

To each \ot configuration $\si$ on $\HH_n$, there corresponds 
thus a unique dimer configuration on $\HnD$.
The converse is more complicated, and we preface the following discussion 
with the introduction of the planar graph $\HH_n'$, derived from $\HH_n$
by a process of `unwrapping'  the torus.  

Let 
$\HH_n'$ be the planar graph obtained from $\HH_n$ by cutting through 
the two homology cycles $\g_x$ and $\g_y$ of the torus, as illustrated in Figure \ref{hex0}. 
That is, $\HH_n'$ may be viewed as the set of edges that
intersect the region marked in Figure \ref{hex0} (in which $n=4$ and the central edge
is labelled $\langle u,v \rangle$). 
We may consider $\HH_n'$ as a \lq partial-graph' 
$\HH'_n=(V_n,\wt{E}_n,H_n)$, where $V_n$ is the vertex set, $\wt{E}_n$ is 
the `internal' edge set, and $H_n$ is the set of half-edges having one 
endpoint in $V_n$ and one outside $V_n$. 
We write $H_x^1$ and $H_x^2$ (\resp,
$H_y^1$, $H_y^2$) for the sets of half-edges
that cross the upper left and lower right sides 
(\resp, upper right and lower left sides) of the diamond of Figure \ref{hex0}.
Let $H_u=H_u^1\cup H_u^2$ for $u=x,y$. 
 
\begin{figure}[htbp]
\centering
\includegraphics*[width=0.6\hsize]{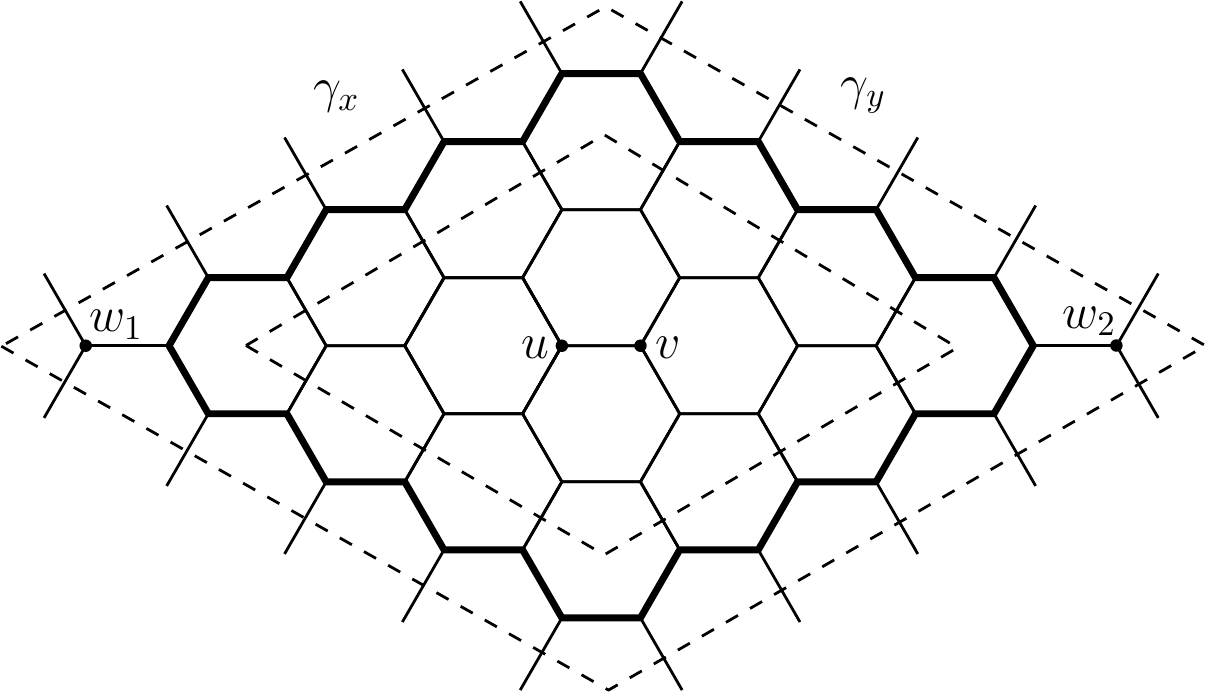}
\caption{The $n\times n$ `diamond' of $\HH$, with $n=4$. 
The region $\HH'_n$ comprises all edges and half-edges that intersect the larger diamond.
The annulus between the given boundaries comprises a cycle $C_n$
(drawn above in bold),
and two further edges incident with the $w_i$.}\label{hex0}
\end{figure}

A \ot  configuration on $\HH'_n$ is a subset of edges and half-edges
such that, for $v \in V_n$,  the total number of edges and half-edges that are incident
to $v$ is either 1 or 2. 
It is explained in \cite[p.\ 4]{ZL2} that 
dimer configurations on $\HnD$ are in one-to-two correspondence to \ot  
configurations on $\HH_n'$ satisfying any of the following  (pairwise exclusive) conditions:
\begin{enumerate}
\item [(ss)] for  $e\in H_x\cup H_y$, the two corresponding half-edges $e^1\in H_x^1\cup H_y^1$, 
$e^2\in H_x^2\cup H_y^2$  have the same state (either both are present 
or neither is present); 
\item [(os)] for  $e\in H_x$, the two corresponding half-edges $e^1\in H_x^1$, $e^2\in H_x^2$ 
have the opposite states (exactly one of them is present); for $e\in H_y$, 
the two corresponding half-edges $e^1\in H_y^1$, $e^2\in H_y^2$ have the 
same state;
\item [(so)] for  $e\in H_x$, the two corresponding half-edges $e^1\in H_x^1$, $e^2\in H_x^2$ 
have the same state; for  $e\in H_y$, the two corresponding half-edges 
$e^1\in H_y^1$, $e^2\in H_y^2$ have the opposite states;
\item [(oo)] for  $e\in H_x\cup H_y$, the two corresponding half-edges $e^1\in H_x^1\cup H_y^1$, 
$e^2\in H_x^2\cup H_y^2$ have the opposite states.
\end{enumerate}
We refer to the above as the \emph{mixed boundary condition} on $\HH_n'$.

The above mixed boundary condition is more permissive than the 
periodic condition that gives rise to \ot configurations on the toroidal graph $\HH_n$,
although the difference turns out to be invisible in the infinite-volume limit
(see Theorem \ref{es}).

\subsection{The spectral curve of the dimer model}\label{ssec:spc}
 We turn now to the spectral curve of the above weighted dimer model on $\HnD$. 
The reader is referred to \cite{ZL-sc}\ for relevant background, and to \cite[Sect.\ 3]{ZL2}
for further details of the following summary.

The fundamental domain of $\HnD$ is the central
lozenge of Figure \ref{fig:12con}, as expanded in Figure \ref{fig:ofd0}. The edges
of $\HnD$ are oriented as in the latter figure. It is easily checked that this
orientation is \lq clockwise odd', in the sense that any face of $\HnD$, 
when traversed clockwise, contains
an odd number of edges oriented in the corresponding direction.
The fundamental domain has $16$ vertices, and its weighted adjacency matrix
(or `Kasteleyn matrix') is the  $16\times 16$ matrix $\mathbf B=(b_{i,j})$ with
$$
b_{i,j} = \begin{cases} w_{i,j} &\text{if $\langle i,j\rangle$ is oriented from $i$ to $j$},\\
 -w_{i,j} &\text{if $\langle i,j\rangle$ is oriented from $j$ to $i$},\\
 0 &\text{if there is no edge between $i$ and $j$},
\end{cases}
$$
where $w_{i,j}$ is given by \eqref{eq:weights}.
From $\mathbf B$ we obtain a \emph{modified} adjacency (or `Kasteleyn')
matrix $\mathbf B(z,w)$ as follows.

\begin{figure}[hbtp]
\centerline{
\includegraphics[width=0.6\textwidth]{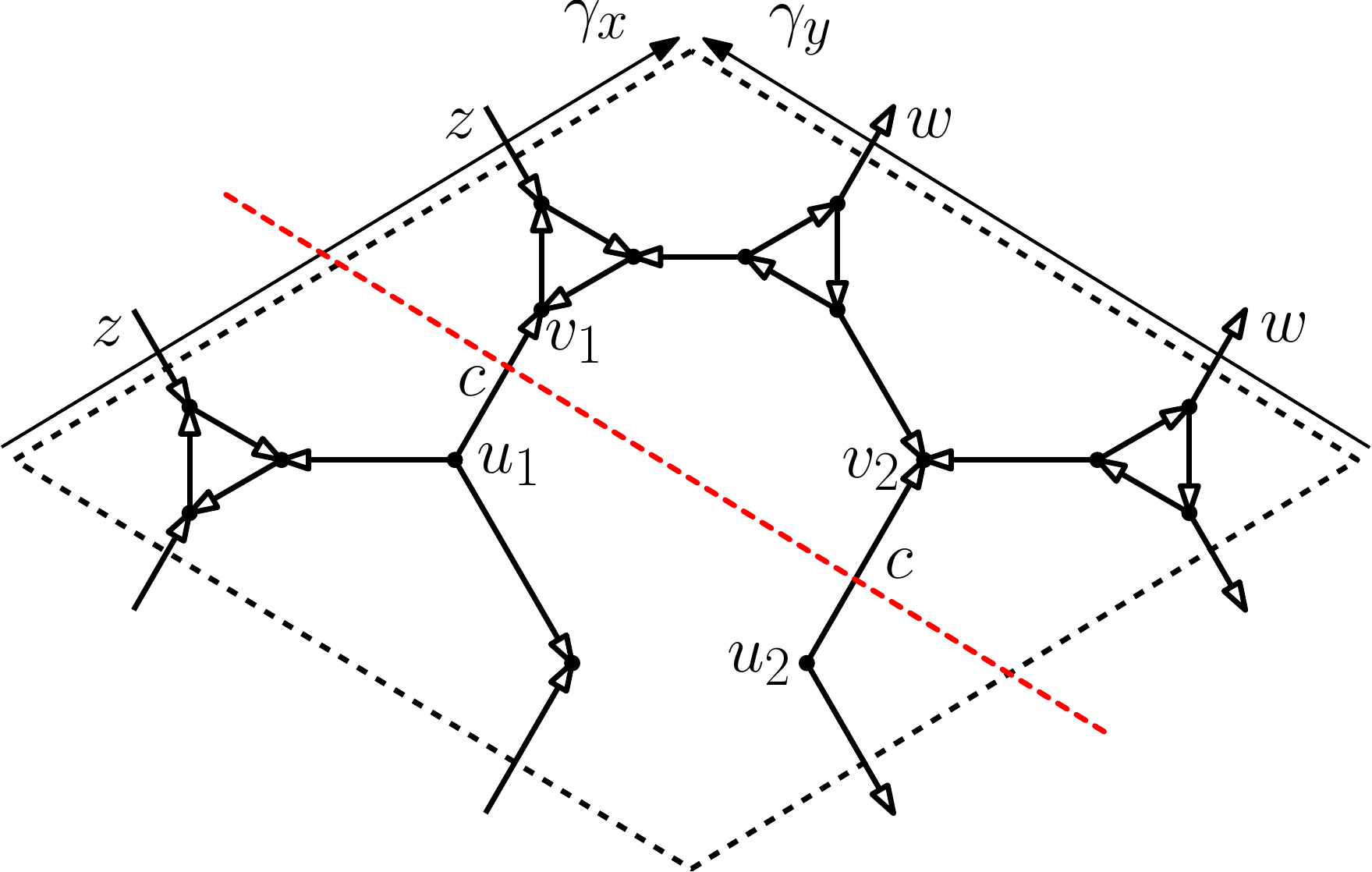}}
\caption{A single fundamental domain of the decorated graph $\HnD$
obtained from the central lozenge of Figure \ref{fig:12con}.
See that figure for an illustration of the relationship between this fundamental
domain and the original hexagonal lattice $\HH$. Note  the homology cycles
$\g_x$, $\g_y$ of the torus, and also the two weight-$c$ edges crossed by the central dashed line.}
\label{fd}\label{fig:ofd0}
\end{figure}

We may consider the graph of Figure \ref{fig:ofd0} as being embedded in a torus, that is, 
we identify the upper left boundary and the lower right boundary, and also the 
upper right boundary and the lower left boundary, as illustrated in the figure by dashed lines. 

Let $w,z \in\CC$ be non-zero.
We orient each of the four boundaries of Figure \ref{fig:ofd0} (denoted by dashed lines) from their
lower endpoint to their upper endpoint.  The `left' and `right' of an oriented 
portion of a boundary are as viewed by a person traversing in the given direction.

Each edge $\langle u,v\rangle$ crossing a  boundary corresponds to two entries in the 
weighted adjacency matrix, indexed $(u,v)$ and $(v,u)$. If the edge starting from $u$ 
and ending at $v$ crosses an  upper-left/lower-right boundary from  left to  right 
(\resp, from right to left), we modify the adjacency matrix by multiplying the entry 
$(u,v)$ by $z$ (\resp, $z^{-1}$). If the edge starting from $u$ and ending at $v$ 
crosses an upper-right/lower-left  boundary from  left to  right (\resp, 
from  right to  left), in the modified adjacency matrix, we multiply the entry by 
$w$ (\resp, $w^{-1}$). We modify the entry $(v,u)$ in the same way.
The ensuing matrix is denoted $\mathbf B(z,w)$,
for a definitive expression of which, the reader
is referred to \cite[Sect.\ 3]{ZL2}.
 
The \emph{characteristic polynomial} is given (using Mathematica or otherwise) by
\begin{align} \label{pzw}
P(z,w)=\det \mathbf B(z,w) = f(a,b,c;w,z),
\end{align}
where 
\begin{align*}
f(a,b,c;w,z)&=
a^4+b^4+c^4+6a^2b^2+6a^2c^2+6b^2c^2-2ab\left(z+\frac{1}{z}\right)\left(a^2+b^2-c^2\right)\\
 &\qq-2ac\left(w+\frac{1}{w}\right)\left(a^2+c^2-b^2\right)-2bc\left(\frac{z}{w}+\frac{w}{z}\right)
 \left(b^2+c^2-a^2\right).
 \end{align*}

The \emph{spectral curve} is the zero locus of the characteristic polynomial, that
is, the set of roots of $P(z,w)=0$. It is proved in  \cite[Lemma 3.2]{ZL2} 
that the intersection 
of $P(z,w)=0$ with the unit torus $\TT^2$ is either empty or a single real 
point $(1,1)$. Moreover, in the situation when $P(1,1)=0$,
the zero $(1,1)$ has multiplicity $2$.
It will be important later to identify the conditions under which $P(1,1)=0$.

\begin{proposition}\label{prop:Pzero} 
Let $a,b>0$ and $c\geq 0$. 
\begin{letlist}
\item  If any of the following hold,
$$
\mathrm{(i)}\q\sqrt{a}=\sqrt{b}+\sqrt{c},\q
\mathrm{(ii)}\q\sqrt{b}=\sqrt{c}+\sqrt{a},\q
\mathrm{(iii)}\q\sqrt{c}=\sqrt{a}+\sqrt{b},
$$
the curve $P(z,w)=0$ intersects 
 the unit torus $\TT^2=\{(z,w):|z|=1,|w|=1\}$ at the unique point $(1,1)$.
 \item If none of {\rm (i)--(iii)} hold, the curve does not intersect the unit torus.
\end{letlist}
\end{proposition}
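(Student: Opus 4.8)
The plan is to reduce the whole statement to a single evaluation, namely the value of $P$ at the point $(1,1)$. By \cite[Lemma 3.2]{ZL2} (quoted above), the curve $P(z,w)=0$ meets the unit torus $\TT^2$ in either the empty set or the single point $(1,1)$, and it contains $(1,1)$ exactly when $P(1,1)=0$. Hence both parts follow at once once I show that $P(1,1)=0$ if and only if one of {\rm (i)--(iii)} holds: when it does, the intersection is non-empty and so equals $\{(1,1)\}$, giving part (a); when it does not, the intersection is empty, giving part (b).

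To compute $P(1,1)$ I set $z=w=1$ in \eqref{pzw}; each of $z+z^{-1}$, $w+w^{-1}$ and $z/w+w/z$ then equals $2$, so
\begin{equation*}
P(1,1)=a^4+b^4+c^4+6a^2b^2+6b^2c^2+6c^2a^2-4ab(a^2+b^2-c^2)-4ac(a^2+c^2-b^2)-4bc(b^2+c^2-a^2).
\end{equation*}
A routine symmetric-polynomial expansion (or Mathematica) shows that this equals $\bigl(2ab+2bc+2ca-a^2-b^2-c^2\bigr)^2$. Substituting $a=\al^2$, $b=\be^2$, $c=\g^2$ with $\al,\be,\g\ge 0$ and using the Heron-type identity
\begin{equation*}
2\al^2\be^2+2\be^2\g^2+2\g^2\al^2-\al^4-\be^4-\g^4=(\al+\be+\g)(-\al+\be+\g)(\al-\be+\g)(\al+\be-\g),
\end{equation*}
I obtain $P(1,1)=\bigl[(\al+\be+\g)(-\al+\be+\g)(\al-\be+\g)(\al+\be-\g)\bigr]^2$. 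Since $a,b>0$ force $\al+\be+\g>0$, this vanishes precisely when one of the remaining three factors is zero, i.e.\ when $\sqrt a=\sqrt b+\sqrt c$, or $\sqrt b=\sqrt a+\sqrt c$, or $\sqrt c=\sqrt a+\sqrt b$ --- exactly conditions {\rm (i)--(iii)}. Combined with the dichotomy above, this proves the proposition.

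The only step carrying any content is the identity $P(1,1)=(2ab+2bc+2ca-a^2-b^2-c^2)^2$, and this is a short direct expansion (both sides equal $a^4+b^4+c^4+6a^2b^2+6b^2c^2+6c^2a^2+4abc(a+b+c)-4(a^3b+ab^3+b^3c+bc^3+c^3a+ca^3)$), so I anticipate no real obstacle. If one wished to avoid citing \cite[Lemma 3.2]{ZL2}, the genuinely delicate work would instead be to restrict $f$ to $\TT^2$ via $z=e^{i\th}$, $w=e^{i\phi}$, to rewrite
\begin{equation*}
f=\bigl(2ab+2bc+2ca-a^2-b^2-c^2\bigr)^2+8ab(a^2+b^2-c^2)\sin^2\tfrac{\th}{2}+8ac(a^2+c^2-b^2)\sin^2\tfrac{\phi}{2}+8bc(b^2+c^2-a^2)\sin^2\tfrac{\th-\phi}{2},
\end{equation*}
and to prove $f\ge 0$ on $\TT^2$ with equality only at $\th=\phi=0$; the hard case --- the one I would expect to be the main obstacle for that route --- is the parameter range in which one of $a^2+b^2-c^2$, $a^2+c^2-b^2$, $b^2+c^2-a^2$ is negative, so that one of the three coefficients above is negative and the minimisation of $f$ requires a careful one-variable reduction (fix $\th-\phi$, minimise over $\phi$, then over $\th-\phi$), ultimately resting on the positivity of $aXY+bXZ+cYZ$ with $X=a^2+b^2-c^2$, $Y=a^2+c^2-b^2$, $Z=b^2+c^2-a^2$.
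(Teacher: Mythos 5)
Your proposal is correct and follows essentially the same route as the paper: both reduce the statement via \cite[Lemma 3.2]{ZL2} to the single evaluation $P(1,1)=f(a,b,c;1,1)=(a^2+b^2+c^2-2ab-2bc-2ca)^2$, which vanishes precisely when $\sqrt a\pm\sqrt b\pm\sqrt c=0$. Your explicit Heron-type factorization and the sketched self-contained alternative are just additional detail on top of the paper's (very brief) argument.
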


\begin{proof}
The intersection of $P(z,w)=0$ with $\TT^2$ can only be either empty or a single point $(1,1)$,
by \cite[Lemma 3.2]{ZL2}. Moreover, 
since
\begin{equation}\label{eq:303}
f(a,b,c;1,1) = (a^2+b^2+c^2-2ab-2bc-2ca)^2,
\end{equation}
we have that $f(a,b,c;1,1)=0$ if and only if $\sqrt a \pm \sqrt b \pm \sqrt c=0$.
\end{proof}

We note for future use that
\begin{equation}\label{eq:304}
P(1,1) = f(a,b,c;1,1)=\tfrac14\bigl[(A^2+B^2+C^2-1)(a+b+c)^2\bigr]^2,
\end{equation}
where $A$, $B$, $C$ are as in \eqref{abc}.
 
\section{Infinite-volume limits}\label{sec:free}

This paper is directed primarily at the asymptotic behaviour of the two-edge 
correlation function of the \ot model, rather than at the existence and multiplicity
of infinite-volume measures. Partial results in the latter direction are reported in this 
section. 
In Section \ref{ssec:inf}, the weak limit of the toroidal \ot measure is
proved via a relationship with the dimer model on a decorated graph.
In Section \ref{ssec:Gibbs} we prove the non-uniqueness of
Gibbs measures for the `low temperature' \ot model. 
The existence of the infinite-volume free energy is proved in Section \ref{ssec:free}.

\subsection{Toroidal limit measure}\label{ssec:inf}

The \ot model may be studied via the dimer representation of Section \ref{sec:dimer}.
The dimer convergence theorem
of \cite{ZL2} is as follows. 

\begin{theorem}\label{thm:dimer}\cite[Prop.\ 3.3]{ZL2}
Consider the dimer measure $\de_{n,\De}$  on $\HH_{n,\Delta}$ 
with parameters $a, b, c>0$.
The limit measure $\de_\De:=\lim_{n\to\oo} \de_{n,\De}$ exists and is 
translation-invariant and ergodic.
\end{theorem}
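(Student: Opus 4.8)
The plan is to follow the Kasteleyn--Pfaffian analysis of dimer models on periodic planar graphs (in the style of Cohn--Kenyon--Propp and Kenyon--Okounkov--Sheffield), exploiting the explicit characteristic polynomial $P(z,w)=\det\mathbf B(z,w)$ of \eqref{pzw} together with Proposition \ref{prop:Pzero}. First I would reduce weak convergence of $\de_{n,\De}$ to convergence of local dimer statistics. Fix the fundamental domain and the clockwise-odd orientation of Figure \ref{fig:ofd0}. The torus partition function $Z_{n,\De}$ is a signed combination $\tfrac12(\pm\Pf\mathbf B^{0,0}\pm\Pf\mathbf B^{1,0}\pm\Pf\mathbf B^{0,1}\pm\Pf\mathbf B^{1,1})$ of the Pfaffians of the four Kasteleyn matrices twisted along the two homology classes of the torus, and for any fixed finite edge-set $\{g_1,\dots,g_k\}$ of $\HnD$ the probability under $\de_{n,\De}$ that all the $g_i$ are occupied is, by the standard local-statistics formula, a $k\times k$ Pfaffian built from the entries $[(\mathbf B^{\th,\tau})^{-1}]_{x,y}$ (with $x,y$ at bounded distance) and from the weights $\Pf\mathbf B^{\th,\tau}/Z_{n,\De}$. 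So it suffices to control, as $n\to\oo$, these four weights and these inverse entries.

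Next I would diagonalise by the discrete Fourier transform. Since $\HnD$ carries a free $\ZZ_n^2$ action, each $\mathbf B^{\th,\tau}$ is conjugate to $\bigoplus_{(z,w)}\mathbf B(z,w)$, where $(z,w)$ runs over pairs with $z^n=(-1)^\th$, $w^n=(-1)^\tau$. Hence $\det\mathbf B^{\th,\tau}=\prod_{(z,w)}P(z,w)$ and $\Pf\mathbf B^{\th,\tau}=\pm\sqrt{\det\mathbf B^{\th,\tau}}$, while each inverse entry becomes a finite Riemann sum $\frac1{n^2}\sum_{(z,w)}[\mathbf B(z,w)^{-1}]_{\bullet\bullet}\,z^{p}w^{q}$ for integers $p,q$ determined by $x$ and $y$. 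As $n\to\oo$ these are Riemann sums for the double contour integral $\frac1{(2\pi i)^2}\oint\oint [\mathbf B(z,w)^{-1}]_{\bullet\bullet}\,z^{p}w^{q}\,\frac{dz}{z}\frac{dw}{w}$ over $\TT^2$. The crucial input here is Proposition \ref{prop:Pzero} together with \cite[Lemma 3.2]{ZL2}: the spectral curve $P=0$ meets $\TT^2$ either not at all, or only at the single point $(1,1)$, which is then a double zero. In the first (non-critical) case $\mathbf B(z,w)^{-1}$ is smooth on all of $\TT^2$, the Riemann sums converge to the integrals at a geometric rate, the four weights converge to an explicit $\{0,1\}$-valued vector, and weak convergence of $\de_{n,\De}$ along the full sequence follows.

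The main obstacle is the critical case $\sqrt a\pm\sqrt b\pm\sqrt c=0$, where the integrand has a singularity at $(1,1)$ coming from the double zero of $P$. Here one must show that the finitely many near-singular Fourier modes — those $(z,w)$ within distance $O(1/n)$ of $(1,1)$ — contribute negligibly to the local statistics after dividing by $Z_{n,\De}$. This is where I expect the real work to lie: one uses the Harnack (real node) structure of the spectral curve at $(1,1)$ to identify the precise nature of the singularity, and the cancellation between the four twisted sectors in the numerator and denominator of the local-statistics formula, to see that the offending terms are absorbed and the limit of the ratio still coincides with the contour integral over $\TT^2$. An alternative is to use a quasi-periodic/monotonicity comparison of the mixed boundary conditions of Section \ref{ssec:dec} against purely periodic ones to pass the limit through the singular point.

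Finally, translation invariance of $\de_\De$ is immediate, since the limiting integral formula for the inverse Kasteleyn entries, and hence for every local statistic, depends on $x$ and $y$ only through the lattice displacement $y-x$, so the limit is invariant under the $\ZZ^2$ action generated by $\tau_1,\tau_2$. Ergodicity then follows from decay of correlations: the covariance of two local events supported at lattice distance $R$ is a finite sum of products of limiting inverse-Kasteleyn entries, at least one factor of which is evaluated at separation $R$, and these entries tend to $0$ as $R\to\oo$ — exponentially fast in the non-critical case (deform the contour off $\TT^2$) and polynomially fast in the critical case (stationary-phase analysis near $(1,1)$). Averaging over translates gives the mixing property, which in particular yields ergodicity of the $\ZZ^2$ action.
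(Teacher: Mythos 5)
The paper does not actually prove Theorem \ref{thm:dimer}: it imports it verbatim from \cite[Prop.\ 3.3]{ZL2}, whose argument (following Boutillier--de Tili\`ere \cite{BdeT10} and Cohn--Kenyon--Propp \cite{ckp00}) is exactly the one you outline --- four twisted Kasteleyn Pfaffians, block-diagonalisation over the characters $z^n=(-1)^\th$, $w^n=(-1)^\tau$, Riemann sums for the entries of $K_n^{-1}$ converging to contour integrals over $\TT^2$, and the dichotomy of \cite[Lemma 3.2]{ZL2} (reproduced here as Proposition \ref{prop:Pzero}) controlling the singularity. So in structure your proposal coincides with the proof being cited. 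Two remarks. First, a small slip: the local-statistics formula for $k$ edges involves the Pfaffian of the $2k\times 2k$ submatrix of $K_n^{-1}$ indexed by the $2k$ endpoints (cf.\ Lemma \ref{pfc} and \eqref{eq:66}), not a $k\times k$ Pfaffian. Second, and more substantively, the critical case is not merely a technicality you can wave at: when $P$ has a double zero at $(1,1)$, the function $1/P(e^{i\theta},e^{i\phi})$ behaves like $|(\theta,\phi)|^{-2}$ near the origin, which is \emph{not} absolutely integrable in two variables, so the statement ``the Riemann sums converge to the integrals'' is false without using the cancellations in the Laurent numerators $Q_{i,j}$ and between the four twisted sectors. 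You correctly name the ingredients (the real-node/Harnack structure at $(1,1)$ and the sector cancellation), but your paragraph there is a plan rather than a proof; this is precisely the content of \cite[Thm 6]{BdeT10} and of the proof of \cite[Prop.\ 3.3]{ZL2}. Your derivation of translation invariance and of ergodicity via decay of the limiting inverse-Kasteleyn entries (exponential off criticality, polynomial at criticality, hence mixing) is the standard and correct conclusion.
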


Let $\mu_n^\m$ (\resp, $\mu_n$)
be the \ot probability measure on $\HH_n'$  (\resp, on the toroidal
$\HH_n$) with 
parameters $a$, $b$, $c$ and mixed boundary condition. 
By the results of \cite{ZL2} and the invariance of $\mu_n^\m$ under sign changes,
\begin{equation}
\mu_n^\m(\si)=\mu_n^\m(-\si)=\tfrac{1}{2}\de_{n,\De}(D_\si),
\label{sm}
\end{equation}
where $D_\si$ is the dimer configuration on $\HnD$
corresponding to the \ot configuration $\si$ on $\HH_n'$.
Since the topology of weak convergence may be given in terms of finite-dimensional cylinder events,
the weak convergence $\de_{n,\De}\to \de_\De$ entails the weak convergence of $\mu^\m_n$ to 
some probability measure $\mu^\m$ on $\HH$. By Theorem \ref{thm:dimer},
$\mu^\m$ is translation-invariant. It is noted at \cite[p.\ 17]{ZL2} that the ergodicity of
$\de_\De$ does not imply that of $\mu^\m$, 
and indeed there exist parameter values for which $\mu^\m$ is not ergodic, by the result of \cite[Thm 4.9]{ZL2}.
 
\begin{theorem}\label{es} Let $a,b,c>0$. The limit
$\mo:=\lim_{n\to\oo} \mu_n$ exists and satisfies $\mo=\mu^\m$.
In  particular, for edges $e$, $f$ of $\HH$,
the limit 
\begin{equation}\label{eq:limit}
\langle \si_e\si_f\rangle :=\lim_{n\to\oo}\langle \si_e\si_f\rangle_n
\end{equation}
exists.
\end{theorem}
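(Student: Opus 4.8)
\textbf{Proof proposal for Theorem \ref{es}.}

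The plan is to deduce the toroidal limit $\mo=\lim_n\mu_n$ from the mixed-boundary limit $\mu^\m$ (which already exists by Theorem \ref{thm:dimer} and the discussion around \eqref{sm}), by showing that the two measures $\mu_n$ and $\mu_n^\m$ on $\HH_n$ become indistinguishable on any fixed finite window of $\HH$ as $n\to\oo$. First I would recall that $\mu_n^\m$ is supported on \ot configurations on $\HH_n'$ satisfying one of the four boundary regimes (ss), (os), (so), (oo) of Section \ref{ssec:dec}, while $\mu_n$ (the toroidal measure) corresponds only to the periodic regime; the difference between the two lies entirely in the behaviour of configurations along the homology cycles $\g_x$, $\g_y$. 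Since a local cylinder event $E$ supported on a finite subgraph $G_0\subset\HH$ is measurable, for all large $n$ the set $G_0$ sits well inside the diamond of Figure \ref{hex0}, far from the cut. The key point is therefore that conditioning on the boundary regime perturbs the probability of $E$ by an amount that vanishes as $n\to\oo$.

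Second, I would make this quantitative via the dimer representation. By \cite[Prop.\ 3.3]{ZL2} (Theorem \ref{thm:dimer}), the dimer measure $\de_{n,\De}$ on $\HnD$ converges to $\de_\De$; moreover, the partition functions admit Pfaffian/determinant expressions in terms of the Kasteleyn matrix $\mathbf B(z,w)$ of Section \ref{ssec:spc}, with the four boundary regimes corresponding to the four choices of sign $(\pm1,\pm1)$ for $(z,w)$, i.e.\ to $\Pf\mathbf B(\pm1,\pm1)$. The toroidal dimer measure is the standard signed combination of these four Pfaffians, while $\mu_n^\m$ corresponds (via \eqref{sm}) to a different, unsigned, combination. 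By Proposition \ref{prop:Pzero} and \cite[Lemma 3.2]{ZL2}, the spectral curve meets $\TT^2$ at most at $(1,1)$, and when it does the zero has multiplicity $2$; consequently the four quantities $P(\pm1,\pm1)=\det\mathbf B(\pm1,\pm1)$ all have the same exponential growth rate in $n$, and the ratios of the corresponding (sub)partition functions, restricted to a fixed finite window, converge to the same limit. Hence the local marginals of the toroidal measure and of each single-regime measure coincide in the limit, so $\mo$ exists and equals $\mu^\m$.

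Third, once $\mo=\mu^\m$ is established as weak convergence on finite cylinder events, the existence of $\langle\si_e\si_f\rangle=\lim_n\langle\si_e\si_f\rangle_n$ in \eqref{eq:limit} is immediate: the event determining $\si_e\si_f$ depends only on the states of the two edges $e$, $f$, which is a finite cylinder event, and $\si_e\si_f$ is a bounded function, so $\langle\si_e\si_f\rangle_n=\mu_n(\si_e\si_f)\to\mu^\m(\si_e\si_f)=:\langle\si_e\si_f\rangle$. This also supplies Theorem \ref{thm:main}(a).

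\textbf{Expected main obstacle.} The delicate step is the claim that the four Pfaffians $\Pf\mathbf B(\pm1,\pm1)$ contribute with the \emph{same} exponential rate, so that no single boundary regime dominates (or is suppressed) in the window marginals. When $P(1,1)\ne0$ (the generic case of Proposition \ref{prop:Pzero}(b)) this is a clean gap/no-gap argument: all four determinants are comparable up to constants. When $P(1,1)=0$, one is on the critical surface and must use the multiplicity-$2$ statement from \cite[Lemma 3.2]{ZL2} to control the subleading behaviour and confirm that the $(1,1)$ regime does not acquire a different polynomial-in-$n$ correction that would skew the limit; handling this borderline case carefully, rather than the bulk combinatorics, is where the real work lies, and it is exactly the place where one leans on the structural results of \cite{ZL2} already quoted in Section \ref{ssec:spc}.
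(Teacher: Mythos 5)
Your proposal follows essentially the same route as the paper: both compare the toroidal and mixed-boundary measures through the dimer representation, express the relevant cylinder probabilities as ratios of (signed versus unsigned) combinations of the four Pfaffians $\Pf K_n(\pm1,\pm1)$, and invoke the argument of \cite[Thm 6]{BdeT10} to conclude that the two ratios converge to the same limit, including on the critical surface where $P(1,1)=0$ with a multiplicity-$2$ zero. Two details to tidy up: it is the toroidal (even-parity) partition function that is the \emph{unsigned} sum of Pfaffians, while the mixed-boundary one carries the sign $-\Pf K_n(1,1)$; and the passage from dimer convergence back to convergence of the \ot measures needs the extra remark that odd correlation functions vanish by the $\si\mapsto-\si$ symmetry while even ones are dimer-measurable via the parity of absent bisector edges along connecting paths (as in \eqref{eq:corr5}), which is how the paper closes the two-to-one correspondence between $\si$ and $D_\si$.
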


\begin{proof} 
Let $\Om_{n,\De}$ be the sample space of the dimer model  on $\HH_{n,\Delta}$. 
Let $\de^\e_n$ be the probability 
measure of the dimer model on $\HH_{n,\Delta}$ on the subspace $\Om^\e_{n,\De}$
of configurations with the property that, along each of the two zigzag paths of $\HH$ that
are neighbouring and parallel to $\g_x$ and $\g_y$, there are an even number of present 
bisector edges.

As explained above (see also \cite{ZL2}),
elements of $\Om_{n,\De}$ correspond to \ot model configurations on $\HH_n'$ with 
the mixed boundary condition, and of $\Om^\e_{n,\De}$ to \ot model configurations on 
the toroidal graph $\HH_n$.  We show next that
\begin{equation}\label{eq:deconv}
\de_{n,\De}^\e \to \de_\De,
\end{equation}
where  $\de_\De:=\lim_{n\to\oo}\de_{n,\De}$ is given in Theorem \ref{thm:dimer}.

Let $Z_{n,\De}$ (\resp, $Z_{n,\De}^\e$) be the partition function of $\Om_{n,\De}$ 
(\resp, $\Om_{n,\De}^\e$), and let $K_n(z,w)$ be the modified Kasteleyn matrix of $\HH_{n,\Delta}$
(see \cite{ZL2} and Section \ref{ssec:spc}).
As explained in \cite[Sect.\ 4B]{SB08}, for $z,w\in\{-1,1\}$,  
$\Pf K_n(z,w)$ is a linear combination of partition functions of dimer 
configurations of four different classes, depending on the parity of the present edges 
along the two zigag paths
winding around the torus. In particular, by \cite[Table 1, Sect.\ 4B]{SB08}, when $n$ is even, 
 \begin{equation*}
 Z_{n,\De}^\e=\tfrac{1}{4}\Bigl[\Pf K_n(1,1)+\Pf K_n(-1,1)+\Pf K_n(1,-1)+\Pf K_n(-1,-1)\Bigr].
 \end{equation*}
 Let $e_i=\langle u_i,v_i \rangle$, $1\le i \le k$, be edges of $\HH_{n,\Delta}$, and let
 $M(e_1,\dots,e_k)$ be the event that every $e_i$ is occupied by a dimer. 
 Let $w_{i}>0$ be the edge weight of $e_i$. Then 
 \begin{align*}
 &\de^\e_n(M(e_1,\dots,e_k))\\
 &\hskip1cm =\prod_{i=1}^{k}w_{e_i}\left|\frac{\Pf \wh K_n(1,1)+\Pf \wh K_n(-1,1)+
 \Pf \wh K_n(1,-1)+\Pf \wh K_n(-1,-1)}{\Pf K_n(1,1)+\Pf K_n(-1,1)+\Pf K_n(1,-1)+\Pf K_n(-1,-1)}\right|,
 \end{align*}
 where $\wh K_n$ is the submatrix of $K_n$ obtained by removing rows and columns indexed by 
 $u_1,v_1,\dots ,u_k,v_k$.
As in \cite[Thm 4]{BdeT10}, 
\begin{align}\label{eq:66}
& \de_{n,\De}(M(e_1,\dots,e_k))\\
&\hskip1cm =\prod_{i=1}^{k}w_{e_i}
\left|\frac{-\Pf \wh K_n(1,1)+\Pf \wh K_n(-1,1)+\Pf \wh K_n(1,-1)+\Pf \wh K_n(-1,-1)}{-\Pf K_n(1,1)+\Pf K_n(-1,1)+\Pf K_n(1,-1)+\Pf K_n(-1,-1)}\right|.\nonumber
\end{align}
As in the proof of \cite[Thm 6]{BdeT10},
$\de^\e_{n,\De}(M(e_1,\dots,e_k))$ and
$\de_{n,\De}(M(e_1,\dots,e_k))$ converge as $n \to\oo$ to the same complex integral.
Since the events $M(e_1,\dots,e_k)$ generate the product $\si$-field, we deduce \eqref{eq:deconv}. 

Finally, we deduce the claim of the theorem.
An \emph{even} (\resp, \emph{odd}) \emph{correlation function} is an 
expectation of the form $\langle \si_A \rangle$, with $\si_A=\prod_{e\in A}\si_e$ 
where $A$ is a finite set of edges of $\HH$ with even (\resp, odd) cardinality.
In order that $\mu_n\to \mu^\m$, it suffices that the correlation
functions of $\mu_n$ and $\mu_n^\m$ have the same limit. By invariance under sign change, the
odd correlation functions equal $0$. 

The relationship between a \ot measure $\mu$ and the corresponding dimer measure $\de$ is as follows. 
Let $e_1,\dots ,e_k$ be bisector edges of $\HH_{n,\De}$,  and let $S(e_1,\dots,e_k)$ 
be the event that every $e_i$ separates two edges of $\HH_n$ with the same \ot state.
Using the correspondence between \ot  and dimer configurations,
\begin{equation*}
\mu(S(e_1,\dots,e_k))=\de(M(e_1,\dots,e_k)).
\end{equation*}

Let $k \ge 1$, let $e_1,e_2,\dots,e_{2k}$ be distinct edges of $\HH_n$, and write
$\si_i=\si_{e_i}$. Then
\begin{equation}\label{eq:couple}
\langle \si_1\cdots \si_{2k}
 \rangle_\mu=1-2\mu(\si_1\cdots\si_{2k}=-1).
\end{equation}
For $i=2,3,\dots, 2k$, let $\pi_i$ be a self-avoiding path between the midpoints of $e_1$ and $e_i$ 
comprising edges of $\HH_n$ and two half-edges, and 
let $\sA_i$ be the event that the number of \emph{absent} 
bisector edges encountered along $\pi_i$ 
is odd. As we move along $\pi_i$ in the \ot model, the edge-state changes at
a given vertex if and only if
the corresponding bisector edge is absent.
Therefore, $\si_1\si_i=-1$ if and only if $\sA_i$ occurs, so that
\begin{equation}\label{eq:corr5}
\langle \si_1\si_i\rangle_\mu= \mu(\ol\sA_i)-\mu(\sA_i).
\end{equation}

Let $\sA$ be the event that the set $I=\{i: \sA_i \text{ occurs}\}$ has odd
cardinality. Since $I=\{i: \si_1\ne \si_i\}$, we have that
 \begin{equation}\label{eq:couple2}
\mu(\si_1\cdots\si_{2k}=-1)=\de(\sA).
\end{equation}

We return to the measures $\mu_n$ and $\mu_n^\m$.
By \eqref{eq:deconv}--\eqref{eq:couple2}, the even correlation functions of $\mu_n$ and $\mu_n^\m$ 
are convergent as $n\to\oo$, with  equal limits. It follows that $\mu_n\to \mo$
where $\mo=\mu^\m$.
\end{proof}

\subsection{Non-uniqueness of Gibbs measures}\label{ssec:Gibbs}
We show the existence of at least two Gibbs measures (that is, `phase coexistence')
for the `low temperature' \ot model on $\HH$. 
Let $\Si$ be the set of \ot configurations on the infinite lattice $\HH$,
and let $\sG=\sG(a,b,c)$ be the set of probability measures on $\Si$ that satisfy
the appropriate DLR condition. (We omit the details of DLR measures here,
instead referring the reader to the related discussions of \cite[Sect.\ 2.3]{MB09} 
and \cite[Sect.\ 4.4]{G-RCM}.)
Since $\Si$ is compact, by Prohorov's theorem \cite[Sect.\ 1.5]{Bill}, 
every sequence of probability measures on $\Si$ has
a convergent subsequence. It may be shown that any weak limit of finite-volume \ot measures 
lies in $\sG$, and hence  $\sG\ne\es$. 

\begin{theorem}\label{pc}
Let $a\geq b >0$. For almost 
every $c$ satisfying either $0<\sqrt c<\sqrt a-\sqrt b$  or
$\sqrt c > \sqrt a+ \sqrt b$, we have that
$|\sG|\ge 2$.
\end{theorem}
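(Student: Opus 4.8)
The plan is to argue by contradiction, deriving from $|\sG|=1$ a decay of correlations that is incompatible with Theorem \ref{thm:main}(c). First I would invoke Theorem \ref{es}: for $a,b,c>0$ the weak limit $\mo=\lim_{n\to\oo}\mu_n$ exists, is invariant under the global sign change $\si\mapsto-\si$, and lies in $\sG$ (cf.\ the discussion preceding the present theorem); moreover, since $\si_e\si_f$ is a bounded local function, $\langle\si_e\si_f\rangle=\lim_n\langle\si_e\si_f\rangle_n=\EE_{\mo}(\si_e\si_f)$. Assume, for contradiction, that $|\sG|=1$. Then $\sG=\{\mo\}$, so $\mo$ is the unique, and hence trivially extremal, element of the convex set $\sG$.

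Next I would use the standard dichotomy in the theory of Gibbs measures (see \cite[Sect.\ 4.4]{G-RCM}, \cite{MB09}, and the references therein): an extremal Gibbs measure is trivial on the tail $\si$-field and, equivalently, has short-range correlations, so that for every local event $A$,
\[
\lim_{\La\uparrow\HH}\ \sup_{B}\ \bigl|\mo(A\cap B)-\mo(A)\,\mo(B)\bigr|=0,
\]
the supremum being over events $B$ measurable with respect to the edge-states outside $\La$. I would apply this with $A=\{\si_e=+1\}$ for a fixed NW edge $e$, and with $B=B_f:=\{\si_f=+1\}$, where $f$ runs through NW edges satisfying \eqref{eq:condition0} with $|e-f|\to\oo$ (such $f$ exist, e.g.\ along a NW ray out of $e$, and each $B_f$ is eventually measurable outside any given finite $\La$). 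Using only the sign-invariance of $\mo$ one has $\mo(\si_e=+1)=\mo(\si_f=+1)=\tfrac12$ and $\mo(\si_e=\si_f=+1)=\tfrac14(1+\langle\si_e\si_f\rangle)$, so that $\mo(A\cap B_f)-\mo(A)\mo(B_f)=\tfrac14\langle\si_e\si_f\rangle$. Short-range correlations would then force $\langle\si_e\si_f\rangle\to0$, hence $\langle\si_e\si_f\rangle^2\to0$, as $|e-f|\to\oo$ along these pairs. But for $a\ge b>0$ and almost every $c$ with $0<\sqrt c<\sqrt a-\sqrt b$ (equivalently $\sqrt a>\sqrt b+\sqrt c$) or $\sqrt c>\sqrt a+\sqrt b$ — precisely the parameter range of Theorem \ref{thm:main}(c) — that theorem gives that $\langle\si_e\si_f\rangle^2$ tends to a non-zero limit along exactly this family of edge-pairs. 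This contradiction shows $|\sG|\ne1$; combined with $\sG\ne\es$ (established before the statement), it yields $|\sG|\ge2$ for every such $c$. A by-product worth recording is that $\mo$ is itself non-extremal for these parameter values, consistently with the non-ergodicity remark in Theorem \ref{es}.

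The hard part will be the appeal to the abstract Gibbs-measure machinery in this hard-constrained model: one must verify that the \ot specification is well defined and sufficiently quasilocal for the equivalence ``extremal $\Leftrightarrow$ trivial tail $\Leftrightarrow$ short-range correlations'' to apply, and that $\mo$ genuinely satisfies the DLR equations (the content of the discussion preceding the theorem, which should be made precise there). Everything else — the elementary identity $\mo(A\cap B_f)-\mo(A)\mo(B_f)=\tfrac14\langle\si_e\si_f\rangle$, and the fact that the two parameter ranges coincide — is routine. One should also note that $e$, $f$ may be taken horizontal rather than NW (with the corresponding path condition), by the symmetry remarked after Theorem \ref{thm:main}.
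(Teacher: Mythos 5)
Your argument is correct in outline, but it takes a genuinely different route from the paper's. The paper proceeds constructively: using Theorem \ref{thm:main}(c) it extracts a subsequence $f_{m_k}$ along which $\langle \si_e\si_{f_{m_k}}\rangle\to\pm\al$ with $\al>0$, uses sign-invariance to see that the conditional probabilities $\mu_{r_m}(\si_e=1\mid\si_{f_m}=\pm1)$ stay separated by a definite gap, and then takes subsequential weak limits $\mu^{\pm}$ of these conditioned finite-volume measures; these differ at $e$ and lie in $\sG$ because the conditioning event recedes to infinity. You instead argue by contradiction through the extremal-decomposition machinery: $|\sG|=1$ forces $\mo$ to be extremal, hence tail-trivial, hence short-range correlated, which (via the identity $\mo(\si_e=\si_f=1)-\tfrac14=\tfrac14\langle\si_e\si_f\rangle$, valid by sign-invariance) forces $\langle\si_e\si_f\rangle\to0$ and contradicts the non-vanishing of $\lim\langle\si_e\si_f\rangle^2$ in Theorem \ref{thm:main}(c). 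Your route is shorter and handles the sign ambiguity of $\langle\si_e\si_f\rangle$ more cleanly (no diagonal subsequence is needed, since short-range correlations kill the correlation outright), but it leans on the full equivalence ``extremal $\Leftrightarrow$ tail-trivial $\Leftrightarrow$ short-range correlations'' for the hard-constrained 1-2 specification; as you note, one must check that the specification is proper and well defined on admissible boundary conditions before importing this from \cite[Sect.\ 4.4]{G-RCM} and \cite{MB09}. The paper's construction needs only the lighter fact that weak limits of measures conditioned on increasingly distant events satisfy the DLR equations (a point it also treats briskly), and it has the additional merit of exhibiting the two Gibbs measures explicitly. Both proofs rest on exactly the same inputs: Theorem \ref{thm:main}(c), the sign-change symmetry of $\mo$, and $\sG\ne\es$.
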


\begin{proof} 
This proof is inspired by that of \cite[Thm 6.2]{MB09}, and it makes use
of Theorem \ref{thm:main}, the proof of which has been deferred to Section \ref{sec:morepf}.
Let $e$ be a given horizontal edge of $\HH_n$, and let $f_m$ be an edge 
satisfying \eqref{eq:condition0} such that
$\pi(e,f_m)$ has length $m$. 
By Theorem \ref{thm:main}(b) and translation invariance, 
for almost every $c$ satisfying the given inequalities, 
there exists $\al>0$ such that
\begin{equation}\label{eq:alpha}
\lim_{m\to\oo}\langle \si_e\si_{f_m} \rangle^2=\alpha^2,
\end{equation}
where $\langle \si_e\si_f\rangle$ is the limiting two-edge correlation as $n \to\oo$
(see Theorem \ref{es}).
It suffices to show that, subject to \eqref{eq:alpha}, $|\sG|\ge 2$.

By \eqref{eq:alpha}, there exists a subsequence $(m_k: k \ge 1)$ along which 
$\langle \si_e\si_{f_{m_k}}\rangle$ converges to either $\al$ or $-\al$. 
Assume the first; the proof is essentially the same in the second case.
For simplicity of notation, we shall assume that
\begin{equation*}
\lim_{m\to\oo} \langle\si_e\si_{f_{m}}\rangle = \al.
\end{equation*}

By the invariance of $\mo$ under sign change of the configuration,
\begin{align*}
\lim_{m\to\oo}\mo(\si_e=1\mid \si_{f_{m}}=1)=\tfrac12(1+\alpha),\\
\lim_{m\to\oo}\mo(\si_e=1\mid\si_{f_{m}}=-1)=\tfrac12(1-\alpha).
\end{align*}
Find $M$ such that
\begin{align*}
\mo(\si_e=1\mid \si_{f_{m}}=-1)&<\tfrac12(1-\tfrac12\al)\\
&<\tfrac12(1+\tfrac12\al)
< \mo(\si_e=1\mid\si_{f_{m}}=1),\qq m \ge M.
\end{align*}
We may find an increasing subsequence $(r_m: m \ge M)$
such that
\begin{align*}
\mu_{r_m}(\si_e=1\mid \si_{f_{m}}=-1)&<\tfrac12(1-\tfrac13\al)\\
&<\tfrac12(1+\tfrac13\al)
< \mu_{r_m}(\si_e=1\mid\si_{f_{m}}=1),\qq m \ge M.
\end{align*} 
Let $\mu^+$ (\resp, $\mu^-$) be a subsequential limit of $\mu_{r_m}(\cdot\mid\si_{f_{m}}=1)$
(\resp, $\mu_{r_m}(\cdot\mid\si_{f_{m}}=-1)$), so that
\begin{equation*}
\mu^{+}(\si_e=1)>\mu^{-}(\si_e=1).
\end{equation*}
In particular, $\mu^+\ne \mu^-$.
Since $|e-f_{m}| \to \oo$ as $m \to\oo$, the measures $\mu^\pm$ satisfy the DLR condition, 
and therefore they lie in $\sG$.
\end{proof}

\subsection{Free energy}\label{ssec:free}

A \emph{boundary condition} $\sB_n$ is a 
configuration on the half-edges $H_n$ of the planar graph $\HH_n' = (V_n, \wt E_n, H_n)$,
in the notation of Section \ref{ssec:inf}. 
Let $Z_n(a,b,c,\sB_n)$ be the partition function of the \ot model 
on $\HH'_n$ with parameters $a$, $b$, $c$ and boundary condition $\sB_n$
(as in \eqref{pft}, say). 
The free energy, for given $a$, $b$, $c$ and 
boundary conditions $(\sB_n: n\ge 1)$, is defined to be
\begin{equation}
\sF(a,b,c,(\sB_n)) := \lim_{n\to\oo}\frac{1}{|V_n|}\log Z_n(a,b,c,\sB_n)\label{fed},
\end{equation}
whenever the limit exists.

\begin{proposition}
Let $(a,b,c)\ne (0,0,0)$.
The free energy of \eqref{fed} exists and is independent of
the choice of boundary conditions $(\sB_n)$. 
Moreover, up to a smooth additive constant, it satisfies
\begin{equation}
\sF(a,b,c)=\frac{1}{4\pi^2}\iint_{[0,2\pi]^2} \log P(e^{i\theta},e^{i\phi})
\,d\theta\, d\phi,\label{fef}
\end{equation}
where $P$ is given in \eqref{pzw}.
\end{proposition}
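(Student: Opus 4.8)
The strategy is to reduce the free energy of the \ot model to that of the decorated dimer model on $\HnD$, and then to apply the standard Kasteleyn--Fisker--Temperley theory for periodic planar dimer models on the torus. First I would use Proposition \ref{prop:12} and the Ising/dimer transformations of Sections \ref{partf}--\ref{sec:dimer} to write, up to an explicit multiplicative constant depending smoothly on $(a,b,c)$,
\begin{equation*}
Z_n(a,b,c,\sB_n) \doteqdot Z_{n,\De}(\sB_n),
\end{equation*}
the dimer partition function on the decorated graph $\HnD$ with the weights \eqref{eq:weights}; the passage from the \ot model to dimers is the content of \cite{ZL2} and the caption of Figure \ref{fig:12con}. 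The multiplicative constants (powers of $2$, products of $\cosh$'s and of the $\eps_g$'s, the factor $a+b+c$ in \eqref{abc}) contribute only a smooth additive term to $\frac1{|V_n|}\log Z_n$, which is absorbed into the phrase ``up to a smooth additive constant'' in the statement.

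Next I would invoke the Kasteleyn theory for dimers on a $\ZZ^2$-periodic planar graph embedded in a torus: the torus partition function $Z_{n,\De}$ is, up to sign and a bounded factor, a linear combination of the four Pfaffians $\Pf K_n(\pm1,\pm1)$ of the modified Kasteleyn matrices, exactly as used in the proof of Theorem \ref{es} and as in \cite[Sect.\ 4B]{SB08}, \cite{BdeT10}. Each such Pfaffian is, in absolute value, the square root of a determinant that factorizes over the Fourier modes $(z,w)=(e^{i\theta_j},e^{i\phi_k})$ with $\theta_j,\phi_k$ ranging over $n$-th roots of unity; concretely $|\det K_n(z,w)|$ is (up to bounded factors) $\prod_{j,k} |P(e^{i\theta_j},e^{i\phi_k})|$ where $P$ is the characteristic polynomial \eqref{pzw}. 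Taking $\frac1{|V_n|}\log$ and recognizing the resulting double sum as a Riemann sum for the integral in \eqref{fef}, one obtains the claimed formula; the four boundary-sector terms all have the same exponential growth rate, so the limit is independent of which linear combination (hence of the boundary condition $\sB_n$) is used. The independence of $\sF$ from a general boundary condition $\sB_n$ on $H_n$, beyond the four periodic/antiperiodic sectors, follows because changing finitely many half-edge states along the boundary alters $\log Z_n$ by $O(n)=o(|V_n|)$.

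\textbf{Main obstacle.} The delicate point is the interchange of limit and logarithm in the Riemann-sum step, i.e.\ controlling $\frac1{n^2}\sum_{j,k}\log|P(e^{i\theta_j},e^{i\phi_k})|$ when $P$ has zeros on the unit torus $\TT^2$. By Proposition \ref{prop:Pzero}, $P$ vanishes on $\TT^2$ only at the single point $(1,1)$, and when it does, that zero has multiplicity $2$ (a node, by \cite[Lemma 3.2]{ZL2}); hence $\log|P|$ is integrable on $[0,2\pi]^2$ and the singularity is mild enough that the Riemann sums converge to the integral. This is the standard Kenyon--Okounkov--Sheffield-type argument for the surface tension / free energy of periodic dimer models, and one must check the uniform integrability or use an explicit local estimate near $(1,1)$ to justify passing to the limit; the rest of the argument is bookkeeping of smooth prefactors. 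I would therefore organize the proof as: (1) reduce to the dimer free energy up to a smooth constant; (2) express $Z_{n,\De}$ via the four Pfaffians and factor each over Fourier modes; (3) pass to the Riemann-sum limit, invoking Proposition \ref{prop:Pzero} to handle the at-most-isolated torus zero; (4) observe boundary-condition independence from the $o(|V_n|)$ bound on boundary perturbations.
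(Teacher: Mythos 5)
Your derivation of the formula \eqref{fef} itself follows the same route as the paper: reduce to the dimer model on $\HnD$, express the toroidal partition function through the four Pfaffians $\Pf K_n(\pm1,\pm1)$, factor the determinants over Fourier modes, and control the Riemann sums for $\log|P|$ using the fact (Proposition \ref{prop:Pzero} and \cite[Lemma 3.2]{ZL2}) that the spectral curve meets $\TT^2$ in at most a single real node. The paper delegates exactly this computation to \cite{KOS06} and \cite{BdeT10}, so that part of your outline is an acceptable expansion of what is being cited.

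The gap is in the boundary-condition independence. You assert that ``changing finitely many half-edge states along the boundary alters $\log Z_n$ by $O(n)=o(|V_n|)$,'' but for a hard-constrained model this is precisely the point that needs proof, not an observation. The half-edge states on $H_n$ are coupled to the internal edges through the degree-$1$-or-$2$ constraint at every boundary vertex, so toggling a boundary half-edge does not act on configurations in any controlled way; a priori some $\sB_n$ could be compatible with very few interior configurations, conceivably with none, in which case $Z_n(a,b,c,\sB_n)=0$ and the free energy would fail to exist. What is required, and what the paper supplies, is a surgery argument: every \ot configuration on the smaller box $\HH'_{n-2}$ can be extended across the annulus $V_n\setminus V_{n-2}$ so as to match an \emph{arbitrary} prescribed $\sB_n$, by choosing a perfect matching of the even cycle $C_n$ of Figure \ref{hex0} and then adjusting the two remaining corner edges at $w_1,w_2$. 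This yields $\bigl|\log Z_n(a,b,c,\sB_n)-\log Z_{n-2}(a,b,c,\sB^0_{n-2})\bigr|\le K|V_n\setminus V_{n-2}|=O(n)$ uniformly in $\sB_n$, from which independence of the boundary condition follows. Your proposal needs some such explicit extension construction; the $O(n)$ perturbation bound cannot be taken for granted.
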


\begin{proof}
The correspondence 
between \ot model configurations on $\HH_n'$ (with the mixed boundary condition) 
and dimer configurations on $\HnD$
was explained in Section \ref{ssec:inf}.
It follows that the free energy of that \ot model is the same as that of the corresponding 
dimer model. The expression \eqref{fef} follows for that case from a 
general argument used to compute the free energy of this dimer model, 
given that either the spectral curve does not intersect the 
unit torus, or the intersection is a unique real point of multiplicity $2$.
See  Proposition \ref{prop:Pzero}
and also \cite[Thm 3.5]{KOS06} and \cite[Thm 1]{BdeT10}.

Next we prove that the free energy of \eqref{fed} is 
independent of the choice of $(\sB_n)$. 
To this end, we consider the boxes $\HH'_n$ and $\HH'_{n-2}$ illustrated in Figure \ref{hex0}.
We claim that, for any boundary condition on $\HH'_n$ (that is, any
present/absent configuration on $H_n$) and any \ot model configuration 
on $\HH'_{n-2}$ (so that the edge-states on $\wt{E}_{n-2}\cup H_{n-2}$ are given), 
there exists a configuration on 
$\wt{E}_n\setminus (\wt{E}_{n-2}\cup H_{n-2})$  such that the
composite configuration is a \ot configuration on $\HH'_n$. 

This claim is shown as follows. Consider a given boundary condition on $H_n$
and a \ot configuration on $\HH'_{n-2}$. 
The vertex-set $V_n \setminus V_{n-2}$ forms a cycle $C_n$
with even length, together with two 
further vertices $w_1$, $w_2$ at the left and right corners, see Figure \ref{hex0}.
From $C_n$ we select a perfect matching. By considering the various possibilities,
we may see that it is always possible to allocate states to the two edges
between $C_n$ and the $w_i$ in such a way that, in the resulting composite configuration,
each $w_i$ has degree either $1$ or $2$.

Let $\sB_n^0$ be the \emph{free boundary condition}, under which
no half-edge is present. We have that
\begin{equation*}
\bigl|\log Z_n(a,b,c,\sB_n)-\log Z_{n-2}(a,b,c,
\sB^0_{n-2})\bigr|\leq |V_n\setminus V_{n-2}|K,
\end{equation*}
for some $K=K(a,b,c)>0$ and all $\sB_n$. Divide by $|V_n|$ and let $n\to\oo$ to obtain the claim.
The theorem follows on noting that the number of boundary configurations is $2^{|H_n|}$,
and $|H_n|/|V_n| \to 0$ as $n\to\oo$.
\end{proof}

\section{Proofs of Theorem \ref{thm:main}($\mathrm{a}$) and of the limit
in Theorem \ref{thm:main}($\mathrm{c}$)}
\label{sec:morepf}

The basic structure of the proof is as follows. As in Section \ref{sec:dimer}, the \ot
model may be represented as a dimer model on a certain decorated graph $\HnD$ derived from $\HH_n$. 
Subject to condition \eqref{eq:condition0}, the two-edge
correlation $\langle \si_e\si_f\rangle_n$ of the \ot model may be represented 
in terms of certain cylinder probabilities of the dimer model.  
Using the theory of dimers, these probabilities may be expressed in terms of ratios of Pfaffians
of block Toeplitz matrices, and a  similar representation follows for the infinite-volume
two-edge correlation $\langle \si_e\si_f \rangle$.  By Widom's theorem \cite{HW0,HW},
the limit $\La(a,b,c):= \lim_{|e-f|\to\oo} \langle \si_e\si_f \rangle^2$ exists, and furthermore
$\La$ is analytic except when the spectral curve intersects the unit torus.
This identifies the phases of the \ot model, and they may be identified as sub/supercritical via 
the extreme values of \eqref{eq:extreme}.

We shall refer to the following condition on the edges $e$, $f$ of $\HH$:
\begin{equation}\label{eq:condition}
\begin{aligned}
&\text{$e$ and $f$ are midpoints of two NW edges such that}\\
&\text{there exists a path $\pi=\pi(e,f)$ in $A\HH_n$ from $e$ to $f$}\\
& \text{using only horizontal and NW half-edges}.
\end{aligned}
\end{equation}
See Figure \ref{fig:lef2}. The principal step in the proof is the following.

\begin{figure}[htbp]
  \centering
\includegraphics*[scale=1]{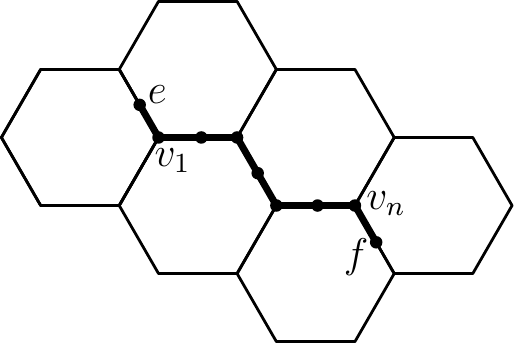}
   \caption{A path $\pi$ comprising horizontal and NW mid-edges, 
   connecting the midpoints of two NW edges $e$ and $f$.}\label{fig:lef2}
\end{figure}

\begin{theorem}\label{anly}
Let $e,f$ be two edges satisfying \eqref{eq:condition}, and let $a\geq b\geq 0$.
The limit $\La(a,b,c):=\lim_{|e-f|\to\oo}\langle \si_e\si_f\rangle^2$  
exists and is complex analytic in 
$c\geq 0$ except when $\sqrt{c}=\sqrt{a}-\sqrt{b}$ and $\sqrt{c}=\sqrt{a}+\sqrt{b}$.
\end{theorem}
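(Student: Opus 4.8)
The plan is to convert the two-edge correlation $\langle\si_e\si_f\rangle$ into a bona fide dimer correlation on the decorated graph $\HnD$, thereby reducing the analyticity question to the analytic dependence of Pfaffians of block Toeplitz matrices on the parameter $c$. First I would use condition \eqref{eq:condition} to write $\langle\si_e\si_f\rangle_n$ as (up to sign) a product over the path $\pi(e,f)$ of indicator-type dimer variables: as explained in Section \ref{ssec:inf} (cf.\ \eqref{eq:corr5}--\eqref{eq:couple2}), the event $\si_e\si_f=-1$ corresponds to an odd number of \emph{absent} bisector edges along $\pi$, so $\langle\si_e\si_f\rangle_n$ equals the $\de_{n,\De}$-expectation of $\prod$ of $(\pm1)$-valued functions of the bisector states along $\pi$. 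Because $\pi$ uses only horizontal and NW half-edges, this expectation involves only finitely many dimer edges in a strip of bounded width, and by the Kasteleyn/Pfaffian theory it is expressible as a ratio $\Pf\wh K_n/\Pf K_n$ (more precisely the signed combination appearing in \eqref{eq:66}) with $\wh K_n$ a finite-rank modification of $K_n$. Taking $n\to\oo$ using Theorem \ref{es} and Theorem \ref{thm:dimer}, one obtains $\langle\si_e\si_f\rangle$ as the Pfaffian of a (semi-infinite) block Toeplitz matrix whose symbol is built from the entries of $\mathbf B(z,w)^{-1}$ integrated against $P(z,w)^{-1}$ over the unit torus.

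Next I would set up the block Toeplitz framework so that Widom's theorem \cite{HW0,HW} applies. The correlation $\langle\si_e\si_f\rangle$ should be written as $\det T_N(\Phi)$ (or a Pfaffian analogue) where $N\asymp|e-f|$ is the length of $\pi$, $T_N(\Phi)$ is the $N$-truncation of a block Toeplitz operator with matrix symbol $\Phi=\Phi_{a,b,c}$, and the entries of $\Phi$ on the unit circle are obtained from the edge weights and from the inverse Kasteleyn matrix; the relevant Fourier coefficients are contour integrals of rational functions of $z,w$ divided by $P(z,w)$. Widom's strong Szeg\H{o}-type theorem then gives the existence of $\La(a,b,c)=\lim_{N\to\oo}\det T_N(\Phi)$ together with an explicit formula of the form $E(\Phi)\cdot G(\Phi)^{N}$ with $G(\Phi)=\exp\big(\frac{1}{2\pi}\int\log\det\Phi\,d\theta\big)$; the limit of the square is finite precisely when $|G|=1$ or the geometric decay is captured by the exponent, which is the content needed for (c). The key point for analyticity is that all ingredients — the symbol $\Phi$, its determinant, the Szeg\H{o} constant $E(\Phi)$, and the geometric factor — depend analytically on $c$ as long as the symbol is smooth and non-degenerate on the unit circle, i.e.\ as long as $P(z,w)$ has no zero on $\TT^2$.

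The main obstacle I expect is precisely controlling where $\Phi$ (equivalently $P(z,w)^{-1}$) fails to be smooth on the unit torus, and checking that Widom's hypotheses (smoothness of the symbol, invertibility, winding-number conditions) hold off the bad set. By Proposition \ref{prop:Pzero}, $P(z,w)=0$ meets $\TT^2$ exactly at $(1,1)$ and exactly when $\sqrt a=\sqrt b\pm\sqrt c$ or $\sqrt c=\sqrt a+\sqrt b$; under the standing assumption $a\ge b\ge 0$ the relevant degeneracies as $c$ varies are $\sqrt c=\sqrt a-\sqrt b$ and $\sqrt c=\sqrt a+\sqrt b$, which is exactly the excluded set in the statement. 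So the plan is: for $c$ outside these two values, the characteristic polynomial is non-vanishing on $\TT^2$, hence $\Phi$ extends to a smooth (indeed real-analytic) matrix-valued function on the circle with the right index, Widom's theorem yields both the existence of $\La(a,b,c)$ and, by analytic dependence of the Szeg\H{o} constant on parameters, its complex-analyticity in $c$ on each of the three intervals $c\in[0,(\sqrt a-\sqrt b)^2)$, $c\in((\sqrt a-\sqrt b)^2,(\sqrt a+\sqrt b)^2)$, $c\in((\sqrt a+\sqrt b)^2,\oo)$; the two threshold values are where the integrand $\log\det\Phi$ develops a logarithmic singularity at $(1,1)$ and analyticity genuinely breaks. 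I would also need to verify that the sign ambiguity and the choice of path $\pi$ do not affect $\langle\si_e\si_f\rangle^2$ in the limit (different admissible $\pi$ differ by contractible loops, absorbed by the Pfaffian identities of \cite{ZL2}), so that $\La$ is well defined.
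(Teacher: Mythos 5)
Your route coincides with the paper's: pass to the dimer model on the decorated graph, express the bisector-edge statistics along the path $\pi$ of \eqref{eq:condition} through Pfaffians of the Kasteleyn matrix, assemble a truncated block Toeplitz matrix whose symbol is built from $K_1^{-1}(z,w)=Q(z,w)/P(z,w)$, invoke Widom's theorem, and locate the loss of analyticity at the zeros of $P$ on $\TT^2$ via Proposition \ref{prop:Pzero}. One structural point is glossed over: the statement ``$\langle\si_e\si_f\rangle$ is expressible as a ratio $\Pf\wh K_n/\Pf K_n$ with $\wh K_n$ a finite-rank modification'' is true of each individual cylinder probability $\mo(M_I)$, but the correlation itself is the alternating sum $\sum_{I}(-2)^{|I|}\mo(M_I)$ over all $2^{2k}$ subsets of bisector edges (Lemma \ref{lem:gf}), and one needs the Pfaffian identity of Lemma \ref{lem:pf2} to collapse this sum into the single Pfaffian $\Pf[Y_{2k}(1)+2cK_{V_\pi}^{-1}]$. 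That identity is what produces an honest truncated block Toeplitz matrix to which Widom's theorem can be applied; without it you have only an exponentially long sum of Pfaffians.

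The substantive gap is the geometric factor. Widom's theorem yields $\det T_n(\psi)/G(\psi)^{n+1}\to E(\psi)$, so the existence of $\La=\lim_n\det T_n(\psi)$ (and its non-triviality, needed for part (c) of Theorem \ref{thm:main}) requires $G(\psi)=1$, i.e.\ $\det\psi(z)=1$ on the unit circle. You note that the limit is controlled ``precisely when $|G|=1$'' but offer no argument that this holds for the symbol at hand, and it is not automatic: it is Lemma \ref{lem:lem102} of the paper, proved by comparing $\det K^*_{1,n}(z,w)$ with $\det K_{1,n}(z,w)$, where $K^*$ reverses the orientation of the two $c$-edges in one fundamental domain, showing the ratio converges to $\pm1$ via a homology-class parity argument, and then pinning the sign to $+1$ by continuity and positivity at $z=1$. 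If $|G(\psi)|$ were less than (resp.\ greater than) $1$, the determinants would decay to $0$ (resp.\ diverge), and the theorem as stated would fail. The remaining ingredients of your sketch --- smoothness and analytic dependence of the symbol, of $E(\psi)$, and hence of $\La$ in $c$ wherever $P$ has no zero on $\TT^2$, together with the identification of the exceptional set as $\sqrt c=\sqrt a\pm\sqrt b$ under $a\ge b$ --- agree with the paper.
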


Before giving the proof of Theorem \ref{anly}, we explain
how to deduce some of the claims of Theorem \ref{thm:main}(a,\,c);
the \emph{exponential rate} of convergence in part (c) is proved in 
Section \ref{sec:pf32}.

\begin{proof}[Proof of Theorem \ref{thm:main}(a,\,c), without the exponential rate]
Part (a) holds by Theorem \ref{es}.

Let $a \ge b >0$, and let $e$, $f$ satisfy \eqref{eq:condition}. 
By Theorem \ref{anly}, 
the function $\La_3(\cdot):=\La(a,b,\cdot)$ is complex analytic 
on each of the intervals 
\begin{align*}
C_1&=\bigl[0,(\sqrt{a}-\sqrt{b})^2\bigr),\\
C_2&=\bigl((\sqrt{a}-\sqrt{b})^2,(\sqrt{a}+\sqrt{b})^2\bigr),\\
C_3&= \bigl((\sqrt{a}+\sqrt{b})^2,\oo\bigr).
\end{align*}
(That is to say, for $c \in C_i$ considered as a line in the complex plane, $\La_3$ is analytic
on some open neighbourhood of $c$.)  

\begin{remark}
Note in passing that, by Remark \ref{rem:subcrit},
we have $\La_3(c)=0$ for $c$ lying in the interval
$S:=(\sqrt{a^2-b^2}, \sqrt{a^2+b^2})$. 
Since $S \subseteq C_2$
and $\La_3$ is analytic on $C_2$, we have as implied in part (b) that 
$\langle \si_e,\si_f\rangle \to 0$ as $|e-f|\to\oo$. (This  is trivial if $a=b$.)
\end{remark}

We turn to part (c). Consider first the interval $C_1$, and assume $a > b$.
Since non-trivial analytic functions have only isolated zeros, it follows that:
either $\La_3\equiv 0$ on $C_1$, or $\La_3$ is non-zero except possibly 
on a set of isolated points of 
$C_1$. By \eqref{eq:extreme}, $\La_3(0)=1$, whence the latter holds.

By \eqref{eq:extreme}, 
$\langle \si_e\si_f\rangle=1$ when $a=b=0$ and $c=1$.
Since $\La_3$ is analytic (and hence continuous) on $C_3$,  
there exists $\al>0$ such that
$\La(a,b,c)\geq \alpha$ in a small (real) neighbourhood of $(0,0,1)$. 
Since $\La$ depends only on the ratios $a:b:c$ (cf.\ \eqref{eq:ratio}), 
we deduce that, for fixed $a,b>0$ and sufficiently large $c$,
we have $\Lambda_3(c)\geq \al >0$. By Theorem \ref{anly}, $\La_3$ is analytic 
on $C_3$, and the claim holds as above.
\end{proof}

The remainder of the section is devoted to the proof of Theorem \ref{anly}.
We shall develop the notation and arguments of Section \ref{ssec:inf}.
Let $\mu_n^\m$ be the \ot measure on $\HH_n'$ with the  mixed boundary condition
of Section \ref{ssec:inf}, 
and let $\mu^\m:=\lim_{n\to\oo}\mu_n^\m$, as after Theorem \ref{thm:dimer}.
By Theorem \ref{es}, the \ot measure $\mu_n$ on $\HH_n$ satisfies 
$\mu_n \to \mo=\mu^\m$ as $n\to\oo$. 

Let $e,f$ be  edges of the hexagonal lattice $\HH$ satisfying \eqref{eq:condition}.
Let the path $\pi$ of \eqref{eq:condition} traverse a total of $2k-1$ 
edges and two half-edges, so that  
$\pi$ passes $2k$ bisector edges of the infinite decorated graph $\HH_{\Delta}$.  
We denote this set of bisector edges by 
\begin{equation}\label{eq:B}
B=\bigl\{b_i=\langle u_i,v_i\rangle: i=1,2,\dots,2k\bigr\},
\end{equation}
where $v_i\in\pi$.

Our target is  to represent $\langle\si_e\si_f \rangle$ as the Pfaffian of a truncated block 
Toeplitz matrix, as inspired by \cite[Sect.\ 4.7]{RK1}. A principal difference between \cite{RK1}
and the current work  is that, whereas bipartite graphs are considered there and the determinants
of weighted adjacency matrices are computed, in the current setting the graph is non-bipartite
and we will compute Pfaffians.

To $\HH_{\Delta}$ we assign a clockwise odd orientation as in Figure \ref{fig:ofd0}:
the figure shows  a clockwise odd orientation of $\HH_{1,\Delta}$, 
embedded in a $1\times 1$ torus, that lifts to a clockwise odd orientation of 
$\HH_{\Delta}$. As in \eqref{eq:weights}, 
a horizontal (\resp, NW, NE) bisector edge of $\HH_{\Delta}$ is 
assigned weight $a$ (\resp, $b$, $c$), and all the other edges are assigned weight 1. 
The bisector edges $g_i=\langle u_i,v_i\rangle$ are oriented in such a way that
each $g_i$ is oriented from $u_i$ to $v_i$ in this clockwise-odd orientation. 

Let $K_n$ be the Kasteleyn matrix of $\HnD$
(as in Section \ref{ssec:spc} and \cite{ZL2}), and 
let $|v|$  denote the 
index of the row and column of $K_n$  corresponding to the vertex $v$.
Assume that $|v_i|=|u_i|+1$ for $1\le i \le 2k$, and furthermore that
\begin{equation}\label{eq:order}
|u_1|<|v_1|<|u_2|<|v_2|<\dots <|u_{2k}|<|v_{2k}|.
\end{equation} 
Let $K^{-1}$ be the  
infinite matrix whose entries are the limits of the entries of $K_n^{-1}$ as $n\to\oo$.
The existence of $K^{-1}$ may be proved by
an explicit diagonalization of $K_n$ using periodicity, as in \cite[Sect.\ 7]{ckp00}
and \cite{ZL1}.

We now construct the modified Kasteleyn matrix $K_1(z,w)$ of $\HH_{1,\De}$ by multiplying the 
corresponding entries in its Kasteleyn matrix by $z$ or $z^{-1}$ (\resp, 
$w$ or $w^{-1}$), according to the manner in which the  edge crosses one of the two homology 
cycles $\g_x$, $\g_y$ indicated
in Figure \ref{fig:ofd0}. 
As remarked in Section \ref{ssec:spc}, the characteristic polynomial $P(z,w)=\det K_1(z,w)$ is the function 
$f(a,b,c;w,z)$ of \eqref{pzw}, see also \cite[Lemma 9]{ZL2}. 
The intersection of the spectral curve $P(z,w)=0$ and the unit torus $\TT^2$
is given by Proposition \ref{prop:Pzero}. 

Consider the toroidal graph $\HnD$. Let $\g_x$, $\g_y$ be homology cycles of the torus,
which 
for definiteness we take to be shortest cycles composed of unions of boundary segments of fundamental
domains as in Figure \ref{fig:ofd0}.
Let $K_n(z,w)$ be the modified Kasteleyn matrix of $\HnD$.

For $I\subseteq \{1,2,\dots,2k\}$, let $M_I$ be the event that 
every $b_i$ with 
$i\in I$ is present in the 
dimer configuration. Assume $n$ is sufficiently large that
\begin{equation}\label{eq:noint}
\text{for $1\le i \le 2k$,\q the edge $b_i$  intersects neither $\gamma_x$ nor $\gamma_y$}.
\end{equation} 
For $n$ even, as in \eqref{eq:66},
\begin{align}\label{eq:234}
&\mo(M_I)
=\lim_{n\to\oo}\mu_n(M_I)\\
&\hskip3mm=\lim_{n\to\oo} J_I
\frac{-\Pf \wh K_{n,{I}}(1,1)+\Pf \wh K_{n,{I}}(1,-1)+\Pf \wh K_{n,{I}}(-1,1)+\Pf \wh K_{n,{I}}(-1,-1)}
{-\Pf K_n(1,1)+\Pf K_n(1,-1)+\Pf K_n(-1,1)+\Pf K_n(-1,-1)},\nonumber
\end{align}
where 
\begin{equation}\label{235}
J_I= \prod_{i\in I}[K_n]_{u_i,v_i},
\end{equation}
and $\wh A_I$ denotes the submatrix of the matrix $A$ after deletion
of  rows and columns corresponding to 
$\{u_i, v_i: i\in I\}$ (see \cite[Thm 0.1]{ZL2}). 
Note that
$\mo(M_\es)=1$.

The limit of \eqref{eq:234} can be viewed as follows. 
Each monomial in the expansion of $\Pf K_n(z,w)$, 
$z,w\in\{-1,1\}$, corresponds to the product of edge-weights of a dimer configuration, 
with possibly negative sign. 
The coefficients in the linear combination of $\Pf K_n(1,1)$, $\Pf K_n(1,-1)$, 
$\Pf K_n(-1,1)$, and $\Pf K_n(-1,-1)$ are chosen in such a way that
the products of edge-weights of 
different dimer configurations correspond to monomials of the same sign. The numerator of \eqref{eq:234}
is the sum over dimer configurations containing every $b_i$, $i\in I$; 
this can be computed by the corresponding sum of monomials in the expansion 
of the denominator. Under \eqref{eq:noint},
 $[K_n]_{u_i,v_i}:= [K_n(z,w)]_{u_i,v_i}$ is independent of 
 $z,w\in\{-1,1\}$. 
Since each $b_i$ is oriented from $u_i$ to $v_i$, we have that
$[K_n]_{u_i,v_i}=c$, whence $J_I=c^{|I|}$.

\begin{lemma}\label{pfc}
Let $A$ be a $2m\times 2m$ invertible, anti-symmetric matrix, and let $L\subseteq\{1,2,\dots,2m\}$ 
be a nonempty even subset. 
Let $\wh A_L$ be the submatrix
 of $A$ obtained by deleting the rows and columns indexed by elements in $L$, and let $A_L^{-1}$ 
 be the submatrix of $A^{-1}$ with rows and columns indexed by elements in $L$. Then
\begin{equation*}
(-1)^{S(L)}\Pf \wh A_L=\Pf (A)\Pf(A_L^{-1}),
\q \mathrm{where}\ S(L)=\sum_{l\in L} l.
\end{equation*}
\end{lemma}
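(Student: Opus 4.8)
The plan is to reduce the statement to the classical Jacobi-type identity relating the Pfaffian of a principal submatrix of an antisymmetric matrix to the Pfaffian of the complementary block of the inverse. First I would recall the basic fact that for an invertible antisymmetric matrix $A$ of size $2m$, one has $\Pf(A)^2 = \det A$, and that the inverse $A^{-1}$ is again antisymmetric. The key algebraic tool is the Jacobi complementary minor identity for Pfaffians: if $L \subseteq \{1,\dots,2m\}$ has even cardinality, then the Pfaffian of the submatrix $A^{-1}_L$ (rows and columns indexed by $L$) equals $\Pf(A)^{-1}$ times $\pm \Pf \wh A_{L}$, where $\wh A_L$ is obtained by deleting the rows and columns in $L$, and the sign is $(-1)^{S(L)}$ with $S(L) = \sum_{l \in L} l$. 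Multiplying through by $\Pf(A)$ gives exactly the claimed identity.

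The cleanest route to establish this is via the standard block-matrix manipulation. Write $A$ in block form after permuting rows and columns so that the indices in $L$ come first; track the sign of this permutation, which contributes the factor $(-1)^{S(L)}$ (up to a fixed parity shift that cancels on both sides). Then use the Schur-complement-style factorization of $A$ into a product of a lower-block-triangular, a block-diagonal, and an upper-block-triangular matrix, chosen compatibly with the antisymmetric structure so that the block-diagonal factor has blocks $A_{L,L}$-complement pieces and $\wh A_L$-related pieces. Taking Pfaffians is multiplicative under congruence by such triangular factors (with Pfaffian $1$), so $\Pf(A)$ factors accordingly; comparing with the analogous factorization of $A^{-1}$ yields the identity. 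Alternatively, one can argue purely combinatorially by expanding both sides over perfect matchings, but the block-matrix argument is shorter and less error-prone.

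The main obstacle — really the only subtle point — is bookkeeping the sign $(-1)^{S(L)}$ correctly, since Pfaffians are sensitive to the order of indices and any reshuffling of rows/columns. I would handle this by fixing the convention that $\Pf$ is computed with respect to the natural increasing order of indices, and carefully computing the sign of the permutation that moves $L$ to the front: this is $(-1)^{\sum_{l\in L}(l - \mathrm{rank}_L(l))}$ where $\mathrm{rank}_L(l)$ is the position of $l$ within $L$, and one checks that the $\mathrm{rank}_L$ contributions, being a sum $1 + 2 + \cdots + |L|$ with $|L|$ even, contribute an even amount and hence drop out, leaving $(-1)^{S(L)}$. The evenness of $|L|$ is precisely what makes the sign clean, which is why the hypothesis that $L$ is even (and nonempty) is imposed. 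Everything else — invertibility of $\wh A_L$ is not needed, only that $A$ is invertible so that $A^{-1}$ exists and the identity makes sense even when $\Pf \wh A_L = 0$ — follows directly.
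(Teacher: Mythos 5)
Your overall strategy is viable, and it is worth noting that the paper does not actually prove this lemma: it simply cites \cite{CSS} (Lemma A.2 there), so any self-contained derivation already goes beyond what the paper supplies. The route you choose --- permute the indices of $L$ to the front, factor the permuted antisymmetric matrix through a unipotent congruence so that $\Pf (A)=\pm\Pf (\wh A_L)\,\Pf (S)$ with $S$ the Schur complement of $\wh A_L$, and identify $A_L^{-1}$ with $S^{-1}$ --- is a standard and correct way to obtain the identity, modulo a density argument when $\wh A_L$ is singular (which you wave at, but which is easily supplied since both sides are rational in the entries of $A$).

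However, the one step you yourself single out as the only subtle point, namely the sign, is wrong as written, for two compensating reasons. First, the parity claim is false: $1+2+\cdots+|L|=\tfrac12|L|(|L|+1)$ has the parity of $|L|/2$, so it is odd whenever $|L|\equiv 2 \pmod 4$ (already $|L|=2$ gives $3$). Hence the sign of the reordering permutation is $(-1)^{S(L)+|L|/2}$, not $(-1)^{S(L)}$. Second, your accounting omits the sign produced in passing from $\Pf(S)$ to $\Pf(S^{-1})$: for an invertible antisymmetric $2k\times 2k$ matrix $B$ one has $\Pf (B^{-1})=(-1)^{k}/\Pf (B)$, which here contributes a further $(-1)^{|L|/2}$. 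These two factors of $(-1)^{|L|/2}$ cancel, so the final answer is indeed $(-1)^{S(L)}$, but the derivation you give does not show this; the case $m=1$, $L=\{1,2\}$, where the reordering permutation is the identity yet $(-1)^{S(L)}=-1$, immediately exposes the gap. To repair the argument you need only correct the parity computation and insert the $\Pf (B^{-1})$ sign explicitly.
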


\begin{proof}
See, for example, \cite[Lemma A.2]{CSS}.
\end{proof}

The conclusion of Lemma \ref{pfc} holds also when $L=\es$,
subject to the convention that $\Pf(A_{\es}^{-1})=1$.

Returning to \eqref{eq:234}, take $L=\{|u_i|,|v_i|: i \in I\}$, so that
$(-1)^{S(L)}=(-1)^{|I|}$, by the choices before \eqref{eq:order}. 
When the spectral curve does not intersect the unit torus $\TT^2$, by Lemma \ref{pfc},
\begin{align*}
\lim_{n\to\oo}\frac{\Pf\wh {K}_{n,I}(z,w)}
{\Pf {K}_n(z,w)}
&=\lim_{n\to\oo}(-1)^{|I|}\Pf K^{-1}_{n,I}(z,w)\\
&=(-1)^{|I|}\Pf K^{-1}_{I},
\end{align*}
where the limit is independent of $z,w\in\{-1,1\}$; see 
\cite[Lemma 4.8]{ZL1} for a proof of the existence of the limits of the entries of 
$K_n^{-1}$. 
By \eqref{eq:234}--\eqref{235}, 
\begin{equation}
\mo(M_I)=(-c)^{|I|}\Pf K_{I}^{-1}.\label{mm}
\end{equation}

We shall make use of the following elementary lemma, the proof of which is omitted.

\begin{lemma}\label{lem:gf}
Let $S$ be a random subset of the finite nonempty set $B$. The probability generating
function (pgf) $G(x)=\EE(x^{|S|})$ satisfies
\begin{equation*}
G(1+\la) = \sum_{I \subseteq B} \la^{|I|} \PP(S \supseteq I), \qq \la \in \RR.
\end{equation*}
\end{lemma}

Let $B$ be the set of bisector edges along $\pi$ (see \eqref{eq:B}), and let
$S$ be the subset of such  edges that are present in the dimer configuration.
By \eqref{eq:corr5}, 
$$
\langle \si_e\si_f\rangle = G(-1),
$$ 
where $G$ is the pgf of $|S|$ under the measure $\mo$. By \eqref{mm} and Lemma \ref{lem:gf},
\begin{align}\label{eq:237}
\langle \si_e\si_f\rangle = \sum_{I \subseteq B} (-2)^{|I|} \mo(M_I)
= \sum_{I \subseteq B} (2c)^{|I|} \Pf K_I^{-1}.
\end{align}
This may be recognized as the Pfaffian of a certain matrix defined as follows. 

Let $Y_1(\lambda)$ be the $2\times 2$ matrix
\begin{equation*}
Y_1(\lambda)=\begin{pmatrix} 0&\lambda\\-\lambda&0 \end{pmatrix},
\end{equation*}
and let $Y_{2k}(\lambda)$ be the $4k\times 4k$ block diagonal matrix with 
diagonal $2\times 2$ blocks equal to $Y_1(\lambda)$. More precisely, 
$Y_{2k}(\lambda)$ has rows and columns indexed $u_1, v_1, u_2, v_2, \dots,
u_{2k},v_{2k}$, and
\begin{equation*}
Y_{2k}(\lambda)=\begin{pmatrix}
Y_1(\lambda)&0&\cdots&0\\
0&Y_1(\lambda)&\cdots&0\\
\vdots&\vdots&\ddots&\vdots\\
0&0&\cdots&Y_1(\lambda)
\end{pmatrix}.
\end{equation*}

\begin{lemma}\label{lem:pf2}
We have that
\begin{equation}\label{eq:236}
\langle \si_e\si_f \rangle
=\Pf[Y_{2k}(1)+2cK_{V_\pi}^{-1}],
\end{equation}
where $V_\pi=\{u_1,v_1,u_2,v_2,\dots,u_{2k},v_{2k}\}$.
\end{lemma}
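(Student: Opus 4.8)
\textbf{Proof plan for Lemma~\ref{lem:pf2}.}
The strategy is to recognize the sum in \eqref{eq:237} as a Pfaffian cofactor expansion. The key algebraic fact is a ``matrix-determinant/Pfaffian lemma'': for an even-sized antisymmetric matrix $N$ indexed by a set $V_\pi$, and for the block-diagonal antisymmetric matrix $Y_{2k}(1)$ whose $2\times 2$ blocks pair up $(u_i,v_i)$, one has
\begin{equation*}
\Pf\bigl[Y_{2k}(1) + N\bigr] = \sum_{I \subseteq \{1,\dots,2k\}} \Pf N_{I},
\end{equation*}
where $N_I$ is the submatrix of $N$ on the rows/columns indexed by $\{u_i, v_i : i \in I\}$, and the empty term is $1$. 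First I would establish this identity: expand $\Pf[Y_{2k}(1)+N]$ over perfect matchings of the index set; in each matching, the pairs that do \emph{not} use one of the $Y_1$-blocks form a perfect matching of some even sub-collection $\{u_i,v_i : i \in I\}$ using only entries of $N$, while the complementary indices must be matched by the diagonal blocks $Y_1(1)$ (the only available nonzero entries there), each contributing a factor $1$; a sign-bookkeeping check, using that the $Y_1$ blocks are consecutive and pair $u_i$ with $v_i$, shows the signs match exactly those in the Pfaffian expansion of $N_I$. This is the point where one must be careful, since Pfaffian signs depend on the ordering, and I would lean on the ordering convention \eqref{eq:order} together with Lemma~\ref{pfc} to keep the bookkeeping clean.

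Next I would apply this with $N = 2cK_{V_\pi}^{-1}$, so that $N_I = 2c\,(K^{-1})_{I}$ has Pfaffian $(2c)^{|I|}\Pf K_I^{-1}$ — here I use that the index sets $\{u_i,v_i : i\in I\}$ are precisely those appearing in \eqref{mm}, that $K_{V_\pi}^{-1}$ is the restriction of the infinite inverse Kasteleyn matrix to $V_\pi$, and that $\Pf(\lambda A) = \lambda^m \Pf A$ for a $2m\times 2m$ matrix $A$. This yields
\begin{equation*}
\Pf\bigl[Y_{2k}(1)+2cK_{V_\pi}^{-1}\bigr] = \sum_{I\subseteq B} (2c)^{|I|}\Pf K_I^{-1},
\end{equation*}
which is exactly the right-hand side of \eqref{eq:237}. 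Combined with \eqref{eq:237} itself — the identification of $\langle\si_e\si_f\rangle$ with this sum, which was already derived via Lemma~\ref{lem:gf} and \eqref{mm} — this gives \eqref{eq:236}.

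The main obstacle is the sign verification in the first step. One has to confirm that, for a fixed $I$, every perfect matching of $V_\pi$ that restricts to a given matching of $\{u_i,v_i:i\in I\}$ on the $N$-entries and uses the diagonal blocks elsewhere carries the same sign relative to the Pfaffian of $N_I$, independently of $I$ and of the matching. The cleanest route is to observe that deleting the blocks for $i\notin I$ from $Y_{2k}(1)$ and from the full index list changes the Pfaffian sign by a factor that telescopes to $+1$ because each deleted block is a consecutive pair $(u_i,v_i)$ and $\Pf Y_1(1)=1$; alternatively one can cite the standard expansion $\Pf(Y+N)=\sum_I \pm\Pf Y_{I^c}\Pf N_I$ and note $\Pf Y_{I^c}=1$ with the correct sign under the ordering \eqref{eq:order}. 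Once the signs are pinned down, the rest is immediate from \eqref{eq:237}.
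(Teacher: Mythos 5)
Your proposal is correct and follows essentially the same route as the paper: the paper likewise reduces \eqref{eq:236} to the identity $\Pf[Y_{2k}(1)+A]=\sum_{I\subseteq B}\Pf A_I$ and proves it by expanding the Pfaffian over perfect matchings, observing that the $Y_{2k}(1)$ entries force the matched-by-$Y$ indices to be the consecutive pairs $(u_i,v_i)$, with the signs then matching those of $\Pf A_{B\setminus J}$. The only difference is cosmetic — you phrase the sign check via deleting blocks and $\Pf Y_1(1)=1$, while the paper tracks the permutation signs $\sgn(\pi_\mu)$ directly in the matching sum.
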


\begin{proof}
It suffices by \eqref{eq:237} that 
\begin{equation}\label{eq:pf3-}
\Pf[Y_{2k}(1)+A] = \sum_{I\subseteq B}\Pf A_I,
\end{equation}
where $A=(a_{i,j})$ is a $4k\times 4k$ anti-symmetric matrix with consecutive
pairs of rows/columns indexed by the set $B=\{1,2,\dots,2k\}$, and 
$A_I$ is the submatrix of $A$ with pairs of  rows and columns 
indexed by $I\subseteq B$.

Let $G=(V,E)$ be the complete graph with vertex-set $V=\{1,2,\dots,4k\}$,
and recall that
\begin{equation}\label{eq:pfa-}
\Pf A =\sum_{\mu\in \Pi}\sgn(\pi_\mu)\prod_{\substack {(i,j)\in\mu\\ i<j}}a_{i,j},
\end{equation}
 (see \cite{Kast61,Thom}), where $\Pi$ is the set of perfect matchings of $G$, and the permutation $\pi_\mu\in S_{4k}$ is 
given by
\begin{equation}\label{eq:pfa2-}
\pi_\mu=\begin{pmatrix}
1 &2 &3 & 4 &\cdots &4k-1 &4k\\
i_i &j_1 & i_2 & j_2 &\cdots &i_{2k} &j_{2k}
\end{pmatrix}
\end{equation}
where $\mu =\{(i_r,j_r): 1\le r \le 2k\}$, $i_1<i_2<\dots < i_{2k}$, and $i_r<j_r$. 

By \eqref{eq:pfa-},
\begin{align}\label{eq:pfa+}
\Pf[Y_{2k}(1)+A] &=
\sum_{\mu\in \Pi}\sgn(\pi_\mu)\prod_{\substack {(i,j)\in\mu\\ i<j}}[Y_{2k}(1)+A]_{i,j}\\
&=\sum_{K\subseteq V}\biggl(
\sum_{\mu\in \Pi}\sgn(\pi_\mu) \prod_{\substack {(i,j)\in\mu\\i\in K,\  i<j}}[Y_{2k}(1)]_{i,j}
\prod_{\substack {(i,j)\in\mu\\ i\notin K,\ i<j}}a_{i,j}\biggr).\nonumber
\end{align}
The penultimate product is $0$ unless every $i\in K$ is odd and satisfies $(i,i+1)\in \mu$.
Therefore, with $J=\frac12(K+1)\subseteq B$,
\begin{align*}
\Pf[Y_{2k}(1)+A] 
&=\sum_{J\subseteq B}\biggl(
\sum_{\mu\in \Pi}\sgn(\pi_\mu) 
\prod_{\substack {(i,j)\in\mu,\ i<j\\ i,j\notin (2J-1)\cup (2J)}}a_{i,j}\biggr)\\
&=\sum_{J\subseteq B} \Pf A_{B\setminus J},
\end{align*}
as required for \eqref{eq:pf3-}.
\end{proof}

Recall that a matrix is \emph{Toeplitz} if every descending diagonal is constant,
and a block matrix is \emph{block Toeplitz} if  each block is Toeplitz
and every descending diagonal of blocks is constant.
Now, $Y_{2k}\left(1\right)+2cK_{V_\pi}^{-1}$ is a truncated block Toeplitz matrix 
each block of which has size $4\times 4$.
 We propose to use Widom's formula
(see Theorem \ref{wi}) to study the limit of its determinant
as $k\to\oo$.
In this limit, the matrix becomes an infinite block Toeplitz matrix $T(\psi)$ with symbol given by
\begin{equation}
\psi(z)=\frac{1}{2\pi}\int_{0}^{2\pi}\phi(z,e^{i\theta})\,d\theta\label{psz}
\end{equation}
where 
\begin{equation}
\label{pszi}
\phi(z,e^{i\th}) =Y_2(1)+ 2cK_1^{-1}(z,e^{i\th})_{(1:4)},
\end{equation}
and $A_{(1:4)}$ denotes the $4 \times 4$ submatrix of the matrix $A$ with rows and columns
indexed by $u_1,v_1,u_2,v_2$ as in Figure \ref{fig:ofd0}. 
This follows by the explicit calculation
\begin{equation}
[K^{-1}]_{u,v}=\frac{1}{4\pi^2}\int_0^{2\pi}\int_0^{2\pi}e^{i k\phi}[K_1^{-1}(e^{i\theta},e^{i\phi})]_{u,v'}\,d\theta\, d\phi,\qq u,v\in V_\pi,\label{kiuv}
\end{equation}
where $v'$ is the translation of $v$ to the same fundamental domain as $u$, 
and $k$ is the number of fundamental domains traversed in moving from $v'$ to $v$, 
with sign depending on the direction of the move. When $k\neq 0$, 
\eqref{kiuv} is the $k$th Fourier coefficient of the symbol  \eqref{psz}.  
See \cite[Sect.\ 4]{ZL1} for a similar computation.

\begin{lemma}\label{lem:lem102}
Let $z\in\CC$ with $|z|=1$.
When the spectral curve does not intersect the unit torus 
$\TT^2$, we have that $\det \psi(z)=1$.
\end{lemma}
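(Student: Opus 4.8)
\textbf{Proof plan for Lemma \ref{lem:lem102}.}
The plan is to compute $\det\psi(z)$ directly from its definition \eqref{psz}--\eqref{pszi}, reducing the $4\times 4$ determinant to an explicit integral over $\theta$ and then evaluating that integral by residues. First I would write out the $4\times 4$ matrix $\phi(z,e^{i\th}) = Y_2(1) + 2cK_1^{-1}(z,e^{i\th})_{(1:4)}$ explicitly, using the known form of the Kasteleyn matrix $K_1(z,w)$ of $\HH_{1,\De}$ recorded in \cite[Sect.\ 3]{ZL2} and the fact that $\det K_1(z,w) = P(z,w) = f(a,b,c;w,z)$ from \eqref{pzw}. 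Since $K_1^{-1} = (\det K_1)^{-1}\,\mathrm{adj}(K_1)$, the entries of $K_1^{-1}(z,e^{i\th})_{(1:4)}$ are ratios of polynomials in $e^{i\th}$ over $P(z,e^{i\th})$, and the relevant cofactors are again computable (using Mathematica, as for \eqref{pzw}).

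The key point is that $\psi(z)$ is obtained by integrating $\phi(z,e^{i\th})$ over the full circle in the $w=e^{i\th}$ variable, which I would carry out via the substitution $w=e^{i\th}$, $d\th = dw/(iw)$, turning \eqref{psz} into a contour integral $\frac{1}{2\pi i}\oint_{|w|=1}\phi(z,w)\,dw/w$ of a rational function of $w$. Because the spectral curve does not meet $\TT^2$, the denominator $P(z,w)$ has no zeros on $|w|=1$ for $|z|=1$, so the integrand is meromorphic in the annulus and the integral picks up only the poles inside the unit disc (including possibly $w=0$); these residues are explicit. Having obtained $\psi(z)$ as an explicit matrix, I would compute $\det\psi(z)$ and verify it equals $1$; the block structure of $Y_2(1)$ (two copies of $Y_1(1) = \begin{pmatrix}0&1\\-1&0\end{pmatrix}$, each of determinant $1$) and the antisymmetry of $K^{-1}$ should make the answer collapse to $1$ after simplification, using the identity $f(a,b,c;w,z)$ is palindromic in $w$ (invariant under $w\mapsto 1/w$) and likewise in $z$.

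I expect the main obstacle to be the bookkeeping: correctly extracting the $4\times 4$ submatrix of $K_1^{-1}$ indexed by $u_1,v_1,u_2,v_2$ with the right orientations and homology weights, and then identifying which poles of the resulting rational integrand lie strictly inside $|w|=1$ (which may depend on $z$ and on the regime of $(a,b,c)$). A cleaner route, which I would pursue if the brute-force computation becomes unwieldy, is to argue structurally: $\psi(z) = T_0(z)$ is the zeroth Fourier block of the symbol of an infinite block-Toeplitz matrix arising from the inverse Kasteleyn operator of a dimer model, and for such operators the relevant $2\times 2$-per-edge symbol is well known to have determinant identically $1$ on the unit circle when the spectral curve avoids $\TT^2$ — this is exactly the normalization underlying the applicability of Widom's formula (Theorem \ref{wi}) in the form used in Section \ref{sec:pf32} and in \cite{RK1}. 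In that case the proof reduces to matching our $\phi$ with the standard symbol and invoking that known normalization, together with the explicit value \eqref{eq:303}--\eqref{eq:304} of $P(1,1)$ to handle the borderline behaviour. Either way, the computation is finite and the conclusion $\det\psi(z)=1$ should follow after routine algebra.
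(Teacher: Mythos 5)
Your first route (explicit residue computation of the $4\times 4$ matrix $\psi(z)$ followed by symbolic evaluation of its determinant) is in principle a finite verification, but you have not identified any mechanism forcing the answer to be $1$, and the computation is far from routine: the entries of $K_1^{-1}(z,w)_{(1:4)}$ are cofactors of a $16\times 16$ matrix divided by $P(z,w)$, the residue calculus requires knowing which roots of $wP(z,w)=0$ lie inside the unit disc (which varies with $(a,b,c)$ and $z$), and after integration the entries of $\psi(z)$ are algebraic rather than rational in $z$, so "routine algebra" is optimistic. More seriously, your fallback structural argument is circular: Widom's theorem does \emph{not} presuppose $G(\xi)=1$ — it only requires $\det\xi\neq 0$ with zero winding number, and the constant $G(\psi)$ is precisely what Lemma \ref{lem:lem102} computes. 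There is no general normalization of inverse-Kasteleyn symbols that you can invoke here; in \cite{RK1} the analogous constants are also computed case by case.

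The paper supplies the missing mechanism, which is quite different from either of your routes. One realizes $\det\bigl[Y_2(1)+2cK_{1,n}^{-1}(z,w)_{(1:4)}\bigr]$ as the ratio $\det K^*_{1,n}(z,w)/\det K_{1,n}(z,w)$, where $K^*_{1,n}$ is the Kasteleyn matrix of the $1\times n$ torus with the orientations of the two weight-$c$ edges $e_1$, $e_2$ reversed (a rank-$4$ perturbation, handled via $\det(I+XK_{1,n}^{-1})$). As $n\to\oo$ the relevant block of $K_{1,n}^{-1}(z,\pm 1)$ converges to $\psi(z)$ by \eqref{lmk}, so $\det\psi(z)=\de(z,\pm1)$. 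A combinatorial argument on cycle configurations (an essential cycle has homology class $\pm1$ in the $\g_x$ direction, so it crosses the $c$-edges an odd number of times exactly when it crosses the $w$-edges an odd number of times) gives $\det K^*_{1,n}(z,-1)=\det K_{1,n}(z,1)$ and vice versa, whence $\de(z,1)=1/\de(z,-1)=1/\de(z,1)$ and $\de(z)=\pm1$. The sign is then fixed at $z=1$, where the ratio is a quotient of determinants of real antisymmetric matrices, i.e.\ of squares of Pfaffians, hence nonnegative; continuity of $\psi$ on the circle (valid exactly when the spectral curve misses $\TT^2$) propagates $\det\psi\equiv 1$ to all $|z|=1$. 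Your proposal contains neither the step showing the value is $\pm1$ nor the sign determination, and these two steps are the entire content of the lemma.
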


\begin{proof}
Let $\HH_{m,n,\Delta}$ be the toroidal graph comprising $m\times n$ 
fundamental domains (a fundamental domain is drawn in Figure \ref{fig:ofd0}).
We can think of $\HH_{m,n,\Delta}$ as the quotient graph of $\HH_{\Delta}$ 
under the action of $m\ZZ\times n\ZZ$.
To $\HH_{m,n,\Delta}$ we allocate the 
clockwise-odd orientation of Figure \ref{fig:ofd0}.  

Let $K_{m,n}(z,w)$ be the corresponding modified
Kasteleyn matrix. Assume the cycle $\gamma_x$ (\resp, $\gamma_y$) 
crosses $2n$ (\resp, $2m$) edges,
whose weights are multiplied by $z$ or $z^{-1}$ (\resp, $w$ or $w^{-1}$), depending
 on their orientations. 
 Note that $K_{1,1}=K_{1}$.
 
The toroidal graph $\HH_{1,n,\Delta}$ is a line of $n$ copies of the graph 
of Figure \ref{fig:ofd0},
aligned parallel to $\g_x$. It contains $2n$  (bisector) edges with weight $c$, of
which we select two, denoted $e_1$, $e_2$, lying in the same fundamental domain. 
Let $\HH^*_{1,n,\Delta}$ be the oriented graph obtained from $\HH_{1,n,\Delta}$ by 
reversing the orientations of $e_1$ and $e_2$, 
and let $K^*_{1,n}(z,w)$ be the modified Kasteleyn matrix of $\HH^*_{1,n,\Delta}$.
 
Let $X(\la)$ be the $4n\times 4n$ matrix
\begin{equation*}
X(\la)=\begin{pmatrix}
 Y_1(\la)&0&0\\0&Y_1(\la)&0\\0&0&0
\end{pmatrix}.
\end{equation*}
Since $e_1$ and $e_2$ have weight $c$, 
\begin{align}\label{eq:ratios}
\frac{\det K^*_{1,n}(z,w)}{\det K_{1,n}(z,w)}&=
\frac{\det[K^*_{1,n}(z,w)-K_{1,n}(z,w)+K_{1,n}(z,w)]}{\det K_{1,n}(z,w)}\\
&=\frac{\det\left[X(-2c)+K_{1,n}(z,w)\right]}{\det K_{1,n}(z,w)}\nonumber\\
&=\det\left[X(-2c)K_{1,n}^{-1}(z,w)+I\right]\nonumber\\
&=\det\left[Y_2(-2c)K_{1,n}^{-1}(z,w)_{(1:4)}+I\right]\nonumber\\
&= \det\left[2cK_{1,n}^{-1}(z,w)_{(1:4)}+Y_2\left(1\right)\right],\nonumber
\end{align}
for $w=\pm 1$, since $Y_2(1)Y_2(-1)=I$ and $\det Y_2(1)=1$. 
Here, $K_{1,n}^{-1}(z,w)_{(1:4)}$ is the submatrix of $K_{1,n}^{-1}(z,w)$ comprising 
the rows and columns indexed by the four 
vertices incident with the $e_i$, see Figure \ref{fig:ofd0}.

By an explicit diagonalization of $K_{1,n}^{-1}$ as in \cite[Sect.\ 7]{ckp00},
for any two vertices $u$, $v$ in the same fundamental domain, the limit
\begin{equation}\label{lmk}
\lim_{n\to\oo} [K_{1,n}^{-1}(z,w)]_{u,v}=
\frac{1}{2\pi}\int_{0}^{2\pi}[K_{1}^{-1}(z,e^{i\theta})]_{u,v}\,d\theta
\end{equation}
exists and is independent of the choice of $w=\pm 1$.  
The proof of the next lemma is deferred until the current proof is completed.

\begin{lemma}\label{lem:det}
For  $z\in\CC$, the limit
\begin{equation}\label{eq:458}
\de(z,w)=\lim_{n\to\oo}\frac{\det K^*_{1,n}(z,w)}{\det K_{1,n}(z,w)}
\end{equation}
satisfies $\de(z,-1)=\de(z,1) =\pm 1$.
\end{lemma}

We deduce that
\begin{alignat*}{2}
\det\psi(z)&=\lim_{n\to\oo}\det\left[Y_2\left(1\right)+2cK_{1,n}^{-1}(z,1)_{(1:4)}\right]
\q&&\text{by \eqref{psz}, \eqref{pszi}, \eqref{lmk}}\\
&=\de(z,1)=\pm 1&&\text{by \eqref{eq:ratios} and Lemma \ref{lem:det}.}
\end{alignat*}
Setting $z=1$, we have by \eqref{eq:458} that
$\det \psi(1)\ge 0$ since it is the limit of a ratio
of determinants of two anti-symmetric matrices.
By \eqref{psz} and the forthcoming \eqref{k1i}, $\psi$ is continuous on the unit circle when 
the spectral curve does not intersect $\TT^2$, and
the claim follows.
(Note that \eqref{k1i} is a general fact whose proof does not depend on Lemma \ref{lem:lem102}.)
Therefore, $\psi(z)=1$ for $|z|=1$.
\end{proof}

\begin{proof}[Proof of Lemma \ref{lem:det}]
By \eqref{eq:ratios}--\eqref{lmk}, $\de(z,-1)=\de(z,1)=:\de(z)$, say.
We claim that
\begin{equation}\label{eq:457}
\det K^*_{1,n}(z,-1)=\det K_{1,n}(z,1),\qq\det K^*_{1,n}(z,1)=\det K_{1,n}(z,-1).
\end{equation}
By \eqref{eq:458}--\eqref{eq:457}, $\de(z,-1)=1/\de(z,1)$, so that $\de(z)=\pm 1$ as claimed.

We prove \eqref{eq:457} next.
Each non-vanishing term in the expansion of $\det K_{1,n}^*(z,-1)$ 
and $\det K_{1,n}(z,1)$  
corresponds to a cycle configuration on 
$\HH_{1,n,\Delta}^*$, that is, a configuration of cycles and doubled edges
in which each vertex has two incident edges. 
Let $C$ be an oriented cycle of $\HH_{1,n,\De}^*$ 
viewed as an unoriented graph, and recall that $e_1$ and $e_2$ have opposite
orientations in $\HH_{1,n,\Delta}$ and $\HH_{1,n,\Delta}^*$.
It suffices that $C$ contributes the same sign on both sides
of the left equation of \eqref{eq:457}. Let $c(C)$ (\resp, $w(C)$) be the number of
$c$-type (\resp, $w$-type) edges crossed by $C$.
By a consideration of parity, $c(C)$ is even if and
only if $w(C)$ is even, and in this case $C$ contributes the same sign. 
We claim that $w(C)=1$ if $c(C)=1$. It is standard that $C$ is either contractible or essential, 
and that $C$, if essential, has homology type $\pm 1$ in the direction $\g_x$.
Therefore, $w(C)=1$, and the claim follows.
The second equation of \eqref{eq:457} follows similarly.
\end{proof}

We remind the reader of Widom's theorem.

\begin{theorem}[Widom \cite{HW0,HW}] \label{wi}
Let $T_m(\xi)$ be a finite
block Toeplitz matrix with given symbol $\xi$ and $m\times m$ blocks.  Assume
\begin{gather*}
\sum_{k=-\oo}^{\oo}\|\xi_k\|+\left(\sum_{k=-\oo}^{\oo}|k|\cdot
\|\xi_k\|^2\right)^{\frac{1}{2}}<\oo,\\
\det\xi(e^{i\theta})\neq 0,\qquad \frac{1}{2\pi}\Delta_{0\leq\theta\leq
2\pi}\arg\det\xi(e^{i\theta})=0,
\end{gather*}
where $\|\cdot\|$ denotes Hilbert--Schmidt norm, $\xi_k$ is the $k$th Fourier coefficient of $\xi$, and  
\begin{equation*}
\frac{1}{2\pi}\Delta_{0\leq\theta\leq
2\pi}\arg\det\xi(e^{i\theta})=
\frac{1}{2\pi i}\int_{|\zeta|=1} d\zeta\, \frac{\partial}{\partial\zeta}\log\det\xi(\zeta).
\end{equation*}
Then
\begin{equation*}
\lim_{m\to\oo}\frac{\det T_m(\xi)}{G(\xi)^{m+1}}=E(\xi),
\end{equation*}
where
\begin{align}\label{eq:wi}
G(\xi)&=\exp\left\{\frac{1}{2\pi}\int_{0}^{2\pi}\log\det\xi(e^{i\theta})\,d\theta\right\},\\
E(\xi)&=\det \left[T(\xi)T(\xi^{-1})\right],\label{eq:wi2}
\end{align}
where $T(\xi)$ is the semi-infinite Toeplitz matrix with symbol $\xi$,
and the last $\det$ refers to the determinant defined for
operators on Hilbert space differing from the identity by an
operator of trace class.
\end{theorem}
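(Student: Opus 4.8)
The plan is to prove Widom's formula by the operator-theoretic route, reducing the asymptotics of the finite block Toeplitz determinant to a trace-class operator determinant via a canonical Wiener--Hopf factorization of the symbol. Regard the matrix-valued symbol $\xi$ as acting by multiplication on $\ell^2(\ZZ)\otimes\CC^d$, write $P$ for the orthogonal projection onto the nonnegative modes $\ell^2(\{0,1,2,\dots\})\otimes\CC^d$, and set $T(\xi)=PM_\xi P$ and $H(\xi)=PM_\xi(I-P)$ for the associated block Toeplitz and Hankel operators. The finite section $T_m(\xi)$ is then $P_mT(\xi)P_m$, where $P_m$ projects onto the blocks indexed $0,1,\dots,m$, and $Q_m=I-P_m$ projects onto the tail.

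First I would extract the analytic prerequisites from the stated hypotheses. Summability of $\sum\|\xi_k\|$ makes $\xi$ continuous with absolutely convergent Fourier series, so $T(\xi)$ is Fredholm, and the winding-number condition $\frac{1}{2\pi}\Delta\arg\det\xi=0$ forces index zero. The quadratic condition $\sum|k|\,\|\xi_k\|^2<\oo$ is precisely what guarantees that the Hankel operators $H(\xi)$, $H(\xi^{-1})$, and those of the factors below, are Hilbert--Schmidt. Consequently $T(\xi)T(\xi^{-1})-I$, which the standard identity $T(\phi\psi)=T(\phi)T(\psi)+H(\phi)H(\widetilde\psi)$ expresses as a sum of products of two Hankel operators, is trace class, so the Widom constant $E(\xi)=\det[T(\xi)T(\xi^{-1})]$ is a well-defined Fredholm determinant.

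The central step is the canonical Wiener--Hopf factorization $\xi=\xi_-\xi_+$, with $\xi_+,\xi_+^{-1}$ analytic and bounded inside the unit disc and $\xi_-,\xi_-^{-1}$ analytic and bounded outside, in the relevant smoothness class. Because the Hankel correction vanishes when one factor is analytic, this yields the multiplicativity $T(\xi)=T(\xi_-)T(\xi_+)$, with $T(\xi_-)$ block-upper-triangular and $T(\xi_+)$ block-lower-triangular, whose constant diagonal blocks account exactly for the factor $G(\xi)^{m+1}$. Inserting $I=P_m+Q_m$ inside $P_mT(\xi_-)T(\xi_+)P_m$ and extracting the triangular contributions produces, after the factorization bookkeeping, an exact identity of Borodin--Okounkov--Geronimo--Case type,
\begin{equation*}
\frac{\det T_m(\xi)}{G(\xi)^{m+1}}=\det\bigl[T(\xi)T(\xi^{-1})\bigr]\,\det\bigl[I-K_m\bigr],
\end{equation*}
where $K_m$ is the trace-class operator obtained by compressing products of the factor Hankel operators to the tail subspace $Q_m\ell^2(\ZZ)\otimes\CC^d$.

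It then remains to pass to the limit. As $m\to\oo$ the projections $Q_m\to0$ strongly, and since the relevant Hankel operators are Hilbert--Schmidt, $K_m\to0$ in trace norm; by continuity of the Fredholm determinant on the trace-class ideal, $\det[I-K_m]\to1$. Hence the ratio converges to $\det[T(\xi)T(\xi^{-1})]=E(\xi)$, as claimed. I expect the genuine obstacle to be the factorization step: in the block (non-commutative) setting one cannot reduce to a scalar logarithm, and securing a canonical factorization with \emph{trivial partial indices} — so that $T(\xi)$ is actually invertible, not merely Fredholm of index zero — is the substantive point that must be established from the hypotheses before the determinant identity above is meaningful. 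The trace-norm control of $K_m$, though technical, reduces entirely to the quadratic summability hypothesis and is comparatively routine.
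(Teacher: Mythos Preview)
The paper does not prove this theorem; it is quoted from Widom's papers \cite{HW0,HW} and used as a black box, so there is no ``paper's own proof'' to compare against.

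That said, your outline is broadly Widom's original strategy, and the steps involving the Toeplitz--Hankel identity, the Hilbert--Schmidt control of Hankel operators from $\sum|k|\,\|\xi_k\|^2<\infty$, and the trace-norm convergence $K_m\to 0$ are all correct in spirit. The gap you flag at the end, however, is real, and your proposed resolution is wrong. From the stated hypotheses (continuity, nonvanishing of $\det\xi$, and zero winding number) you \emph{cannot} deduce that $T(\xi)$ is invertible, nor that a canonical factorization $\xi=\xi_-\xi_+$ with trivial partial indices exists: there are matrix symbols whose total index is zero but whose partial indices are not, and for these $T(\xi)$ is Fredholm of index zero yet non-invertible. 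So the factorization identity you write down is simply unavailable in general, and no amount of work ``from the hypotheses'' will produce it.

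Widom's actual route around this is not to strengthen the hypotheses but to perturb the symbol. One first proves the determinant identity and the limit under the additional assumption that the factorization exists (equivalently, that $T(\widetilde\xi)$ is invertible), and then approximates the general $\xi$ by $\xi+\epsilon\phi$ with $\phi$ a trigonometric polynomial chosen so that $T(\widetilde\xi+\epsilon\widetilde\phi)$ is invertible for all small $\epsilon\ne 0$; both $\det T_m(\xi+\epsilon\phi)/G(\xi+\epsilon\phi)^{m+1}$ and $E(\xi+\epsilon\phi)$ are analytic in $\epsilon$ on a disc, so the identity at $\epsilon\ne 0$ passes to $\epsilon=0$ by the maximum principle. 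The present paper in fact reproduces exactly this maneuver in Section~\ref{sec:pf32} (see the paragraph beginning ``Assume now that $T(\wt{\psi})$ is not invertible'') when establishing the exponential convergence rate, so you can see the mechanism spelled out there.
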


We note that, when the spectral curve does not intersect the unit torus,
 $\psi$ given by \eqref{psz} is a smooth matrix-valued function on the unit circle, whence 
\begin{equation*}
\|\psi_k\|\leq C\alpha^k,
\end{equation*}
for some $C>0$, $\alpha\in(0,1)$, and all $k > 0$,
where $\psi_k$ is the $k$th Fourier coefficient of $\psi$.

\begin{proof}[Proof of Theorem \ref{anly}]
This holds  as in the proofs of \cite[Lemmas 4.4--4.7]{ZL1}, and the full
details are omitted.  Here is an outline.

The symbol $\psi$ is a $4\times 4$ matrix-valued function.
 Let $n$ be the number of NW edges in the path of \eqref{eq:condition0}
connecting $e$ and $f$. 
By the computations of Section \ref{sec:morepf} (see \eqref{eq:236} and \eqref{psz}), 
\begin{equation}\label{eq:345}
\langle \si_e\si_f \rangle^2=\det T_n(\psi),
\end{equation}
where $T_n(\psi)$ is a truncated block Toeplitz matrix consisting of the first 
$n\times n$ blocks of an infinite block Toeplitz matrix $T(\psi)$ with symbol $\psi$
(so that $T_n(\psi)$ is a $4n \times 4n$ matrix). 
By Lemma \ref{lem:lem102}, when the spectral
curve does not intersect the unit torus $\TT^2$, we have $G(\psi)=1$, 
where $G(\psi)$ is given in \eqref{eq:wi}. By Theorem \ref{wi},
\begin{equation}\label{eq:lambdalim}
\La(a,b,c)=\lim_{n\to\oo}\det T_n(\psi)=E(\psi),
\end{equation} 
where $E(\psi)$ is given in \eqref{eq:wi2}.

Non-analyticity of $\La$ may arise only as follows. One may write
\begin{equation}
 [K_1^{-1}(z,w)]_{i,j}=\frac{Q_{i,j}(z,w)}{P(z,w)},\label{k1i}
\end{equation} 
where $Q_{i,j}(z,w)$ is a Laurent polynomial in $z$, $w$ derived in terms of certain cofactors of 
$K_1(z,w)$, and $P(z,w)=\det K_1(z,w)$ is the characteristic polynomial of the dimer model
on $\HnD$. It follows that $\La$ is analytic when $P(z,w)$ has no
zeros on the unit torus $\TT^2$.
The last occurs only under the condition of Proposition \ref{prop:Pzero},
and the claim follows.
\end{proof}

\section{Proof of exponential convergence in Theorem \ref{thm:main}($\mathrm{c}$)}\label{sec:pf32}

We develop the method of proof of Widom's formula, Theorem \ref{wi},
see \cite{HW0,HW}. 
For a positive integer  $r$, let $A_r\cap K_r$ be the Banach algebra of $r\times r$ 
matrix-valued functions on the unit circle under the norm
\begin{equation*}
\|\phi\|=\sum_{k=-\oo}^{\oo}\|\phi_k\|+
\left(\sum_{k=-\oo}^{\oo}|k|\cdot\|\phi_k\|^2\right)^{\frac{1}{2}},
\end{equation*}
where $\phi_k$ is the $k$th 
Fourier coefficient of the matrix-valued function $\phi$. Note that the trigonometric polynomials are dense
in $A_r\cap K_r$. 

The main theorem of this section is as follows.

\begin{theorem}\label{ms8}
Let $T(\psi)$ be a semi-infinite 
block Toeplitz matrix with symbol $\psi$, where $\psi$ is an $r\times r$ matrix-valued, 
$C^{\infty}$-function on the unit circle. Let $T_n(\psi)$ be the truncated block Toeplitz matrix consisting of the first $n\times n$
blocks of $T(\psi)$. In the limit as $n\to\infty$, $\det T_n(\psi)$ converges to its limit exponentially fast.
\end{theorem}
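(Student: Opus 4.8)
The plan is to retrace the proof of Widom's theorem (Theorem~\ref{wi}, \cite{HW0,HW}), or equivalently to use the block Borodin--Okounkov--Geronimo--Case formula, while keeping quantitative control of the terms there discarded into the $o(1)$, and to identify the rate of convergence with the decay rate of the Fourier coefficients of the Wiener--Hopf factors of $\psi$. First I would reduce to the case $G(\psi)=1$: if $|G(\psi)|\neq 1$ then $\det T_n(\psi)=G(\psi)^{n+1}E(\psi)(1+o(1))$ either diverges or tends to $0$ at a geometric rate, so the statement is vacuous or immediate; and in the application of Section~\ref{sec:pf32} one has $\det\psi\equiv 1$ by Lemma~\ref{lem:lem102}, so $G(\psi)=1$ and the limit is $E(\psi)$. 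Using the invertibility of $\psi$ on $\TT$ and the vanishing of the winding number of $\det\psi$ (as in Theorem~\ref{wi}, and as holds in the application), $\psi$ admits a factorization $\psi=\psi_-\psi_+$ with $\psi_\pm^{\pm1}$ smooth and of plus/minus type (Fourier support in $\ZZ_{\geq0}$, resp.\ $\ZZ_{\leq0}$); in the non-degenerate case this is canonical, and otherwise it carries an additional diagonal factor of monomials with exponents summing to zero, which is harmless below. The first thing to establish is that $\psi_\pm^{\pm1}$ inherit the regularity of $\psi$, and in particular its domain of analyticity when it has one.

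Running Widom's argument (or the block BOGC formula) then yields
\begin{equation*}
\det T_n(\psi)=G(\psi)^{n+1}E(\psi)\det(I-K_n),
\end{equation*}
where $E(\psi)$ is as in \eqref{eq:wi2} and $K_n$ is the compression, to the subspace spanned by the blocks of index $\geq n$, of a fixed trace-class operator $K$ that is a finite product of Hankel operators $H(\eta)$ with $\eta$ among $\psi_\pm^{\pm1}$ and their reflections. Since the trace norm of the compression of a trace-class operator to the blocks of index $\geq n$ tends to $0$, we have $\|K_n\|_1\to 0$, hence $\det(I-K_n)\to1$ and (as $G(\psi)=1$) $\det T_n(\psi)\to E(\psi)$; moreover, by the Lipschitz estimate for Fredholm determinants,
\begin{equation*}
\bigl|\det T_n(\psi)-E(\psi)\bigr|=|E(\psi)|\,\bigl|\det(I-K_n)-1\bigr|\le |E(\psi)|\,e^{1+\|K\|_1}\,\|K_n\|_1.
\end{equation*}
It remains to estimate $\|K_n\|_1$; since $K_n$ is built from Hankel operators compressed to the blocks of index $\geq n$, a direct computation gives $\|K_n\|_1\le C\sum_{|j|>cn}\|\eta_j\|$ for fixed $c,C>0$, where $\eta_j$ is the $j$th Fourier coefficient of the relevant factor.

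Thus the rate of convergence is controlled by the tails of the Fourier coefficients of the factors $\psi_\pm^{\pm1}$, and is exponential precisely when these decay exponentially --- which is the case when $\psi$ (hence, by the first step, its factors) extends to a holomorphic, invertible, matrix-valued function on an annulus $r<|z|<r^{-1}$ around $\TT$, giving $\|\eta_j\|\le C\rho^{|j|}$ for some $\rho\in(0,1)$ and hence $|\det T_n(\psi)-E(\psi)|\le C'\rho^{\,n}$. (For a merely $C^\infty$ symbol the same argument gives convergence faster than any polynomial; the exponential rate is the one relevant to, and valid in, the application, where by \eqref{psz} and \eqref{k1i} the symbol $\psi$ is entrywise a ratio of Laurent polynomials with denominator non-vanishing near $\TT$, so that $\|\psi_k\|\le C\al^{k}$ as noted after Theorem~\ref{wi}; combined with \eqref{eq:345} this delivers the exponential convergence of $\langle\si_e\si_f\rangle^2$ asserted in Theorem~\ref{thm:main}(c).) The main obstacle is the first step --- showing the factors $\psi_\pm^{\pm1}$ inherit the domain of analyticity of $\psi$: in the genuinely matrix-valued setting the factorization is not given by an explicit formula (unlike the scalar case, where one takes the Riesz projection of $\log\psi$), so one must argue from its construction (via a contour integral, or a Riemann--Hilbert problem) together with a convergence/uniqueness argument to locate the analyticity. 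Once this is in hand, the remaining trace-norm estimates are routine, along the lines of \cite[Sect.\ 4]{ZL1}.
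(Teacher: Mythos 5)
Your overall strategy --- a quantitative rerun of Widom's proof via a Wiener--Hopf factorization of $\psi$, exponential decay of the Fourier coefficients of the factors, trace-norm estimates, and a Lipschitz bound for Fredholm determinants --- is the same as the paper's, and the trace-norm estimates you call ``routine'' are indeed carried out in Section \ref{sec:pf32} much as you describe. However, the two points you leave open are precisely the ones carrying the weight of the argument, and your treatment of the second would fail. On the first (regularity of the factors $\psi_{\pm}^{\pm 1}$), you correctly flag it as the main obstacle but only gesture at a Riemann--Hilbert construction; the paper's resolution is concrete: when $T(\wt\psi)$ is invertible, the non-negative Fourier coefficients of $\wt\psi_-^{-1}$ are read off from the first block column of $T(\wt\psi)^{-1}$, and the exponential off-diagonal decay of that inverse is supplied by Strohmer \cite[Thm 1.3]{TS02}. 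That citation is the missing lemma in your first step.

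On the second point, your claim that a non-canonical factorization (nonzero partial indices, i.e.\ a middle diagonal factor of monomials) is ``harmless below'' is not correct: that is exactly the situation in which the Widom/BOGC identity you invoke is unavailable, and your displayed formula $\det T_n(\psi)=G^{n+1}E(\psi)\det(I-K_n)$ with $K_n\to 0$ in trace norm cannot hold when $E(\psi)=0$ (it would force $\det T_n\equiv 0$). Moreover, the factorization hypothesis actually obtainable here is invertibility of $T(\wt\psi)$, which does not guarantee $E(\psi)=\det\bigl(I-H(\psi)H(\wt\psi^{-1})\bigr)\neq 0$; the paper therefore has to split into two cases and, when $I-H(\psi)H(\wt\psi^{-1})$ is not invertible, push the estimate through a Riesz projection onto the generalized eigenspace at $1$. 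For the genuinely degenerate case the paper does something you do not anticipate: since $T(\wt\psi)$ is Fredholm of index $0$, it perturbs to $T(\wt\psi+\eps\wt\phi)$ with $\phi$ a trigonometric polynomial, proves the exponential bound \eqref{tg} with constants uniform in small $\eps$, and then uses analyticity in $\eps$ together with the maximum principle to recover the bound at $\eps=0$. Without some substitute for this perturbation-plus-maximum-principle step and for the Riesz-projection case, your argument covers only the nondegenerate situation.
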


We recall \eqref{eq:345} from the last section, and note that
$\psi$ is a matrix-valued function in $A_4\cap K_4$. 
Let $H(\psi)$ be the Hankel matrix with symbol $\psi$, 
\begin{equation*}
H(\psi)=\bigl(\psi_{i+j+1}\bigr)_{0\leq i,j<\oo},
\end{equation*}
and write 
\begin{equation}\label{eq:346}
\wt\psi(z)=\psi(z^{-1}).
\end{equation}

\begin{lemma}
Let $e$, $f$ be NW edges of the hexagonal lattice satisfying \eqref{eq:condition0}.
In the representation \eqref{eq:345} of the edge--edge correlation 
$\langle \si_e\si_f\rangle_n$,  the symbol $\psi$ is $C^{\infty}$ on the unit circle
whenever the spectral 
curve does not intersect the unit torus.
\end{lemma}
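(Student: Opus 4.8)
The plan is to realise $\psi$ as the restriction to the unit circle $\TT=\{|z|=1\}$ of a matrix-valued function that is holomorphic on a full annular neighbourhood of $\TT$ in $\CC$. Since a holomorphic function restricted to $\TT$ is real-analytic, it is in particular $C^\infty$, which is the assertion. Recall from \eqref{psz}--\eqref{pszi} that
\[
\psi(z)=Y_2(1)+\frac{c}{\pi}\int_0^{2\pi}\bigl[K_1^{-1}(z,e^{i\theta})\bigr]_{(1:4)}\,d\theta,
\]
so the constant matrix $Y_2(1)$ is irrelevant and it suffices to handle the integral term. By \eqref{k1i}, every entry of $K_1^{-1}(z,w)$ equals $Q_{i,j}(z,w)/P(z,w)$ with $Q_{i,j}$ a Laurent polynomial in $z,w$ and $P(z,w)=\det K_1(z,w)$ the characteristic polynomial of \eqref{pzw}; thus $(z,w)\mapsto[K_1^{-1}(z,w)]_{i,j}$ is holomorphic on $\{z,w\ne 0,\ P(z,w)\ne 0\}$.

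The first step is to bound the denominator away from $0$ near the unit torus, which is where the hypothesis is used. Since the spectral curve misses $\TT^2$, the function $|P|$ is continuous and strictly positive on the compact set $\TT^2$, so $m:=\min_{|z|=|w|=1}|P(z,w)|>0$; by uniform continuity of the polynomial $P$ on a compact neighbourhood of $\TT^2$, there is $\delta>0$ such that $|P(z,w)|\ge m/2$ whenever $1-\delta\le|z|\le 1+\delta$ and $|w|=1$. Consequently, for each fixed $\theta\in[0,2\pi]$ the map $z\mapsto[K_1^{-1}(z,e^{i\theta})]_{i,j}$ is holomorphic on the annulus $A_\delta=\{z:1-\delta<|z|<1+\delta\}$, with bounds on the function and on its $z$-derivative that are uniform in $\theta$ (by continuity of $Q_{i,j}$ and $P$ and the lower bound on $|P|$).

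The second step is routine analysis: since $[0,2\pi]$ is compact and the integrand is jointly continuous and, for each $\theta$, holomorphic in $z\in A_\delta$, an application of Morera's theorem — equivalently, differentiation under the integral sign, which is legitimate by the uniform bounds just obtained — shows that $z\mapsto\int_0^{2\pi}[K_1^{-1}(z,e^{i\theta})]_{(1:4)}\,d\theta$ is holomorphic on $A_\delta$. Hence $\psi$ extends holomorphically to $A_\delta$, and its restriction to $\TT$ is real-analytic, so $C^\infty$, as a matrix-valued function on the unit circle. I expect the only delicate point to be the uniform non-vanishing of $P(z,e^{i\theta})$ when $z$ ranges over a two-dimensional neighbourhood of $\TT$ and $\theta$ over all of $[0,2\pi]$; this rests on Proposition \ref{prop:Pzero} and the compactness of $\TT^2$, and once it is in place the rest is the standard principle that a locally uniform integral of a family holomorphic in a parameter is again holomorphic in that parameter.
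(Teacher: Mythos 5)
Your argument is correct and is essentially the paper's own proof, just with the standard details made explicit: both rest on the observation that each entry of $\psi$ is $\frac{1}{2\pi}\int_0^{2\pi}Q(\xi,e^{i\phi})/P(\xi,e^{i\phi})\,d\phi$ with $P$ bounded away from zero near the unit torus, whence smoothness. As a bonus, your holomorphic extension to an annulus immediately yields the exponential decay of the Fourier coefficients of $\psi$, which the paper asserts in the same breath and uses later.
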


\begin{proof}
We recall from Proposition \ref{prop:Pzero} that, when $a\geq b,c >0$  and
$$
\sqrt{a}\neq \sqrt{b}+\sqrt{c}, \qq \sqrt{c}\neq \sqrt{a}+\sqrt{b},
$$
the characteristic polynomial $P(z,w)$ 
has no zeros on the unit torus. As in Section \ref{sec:morepf}, 
$\psi(\xi)$ is a matrix-valued function defined on the unit circle, each entry of which has the form
\begin{equation*}
\frac{1}{2\pi}\int_{0}^{2\pi}\frac{Q(\xi,e^{i\phi})}{P(\xi,e^{i\phi})}\,d\phi,
\end{equation*}
where $P(z,w)$ is the characteristic polynomial, and $Q(z,w)$ is a Laurent polynomial. 
When $P(z,w)$ has no zeros on the unit torus, $\psi(\xi)$ is a $C^{\oo}$ function on the unit circle, 
and the $n$th Fourier coefficient of $\psi(\xi)$ decays exponentially  to 0 as $|n|\to\oo$. 
\end{proof}

\medskip

\par\noindent
{\bf Assume first that the operator $T(\wt{\psi})$ is invertible as an operator 
on $\ell_2$ sequences of $r$-vectors}.
This is equivalent to 
assuming that the matrix-valued function $\psi$ has a factorization of the form
\begin{equation}
\psi=\psi_{+}\psi_{-},\label{ppm}
\end{equation}
where the $\psi_{\pm}$ are invertible in $A_r\cap K_r$, and the $\psi_{+}^{\pm1}$ 
(\resp, $\psi_{-}^{\pm 1}$) have Fourier coefficients that 
vanish for negative (\resp, positive) indices. As in \cite[Sect.\ 3]{HW}, we have
\begin{equation}
\frac{\det T_n(\psi)}{G(\psi)^{n+1}}=
\det\Bigl(I-P_nH(\psi)H(\wt{\psi}_{-}^{-1})P_nT(\psi_{+}^{-1})P_n\Bigr),\label{dq}
\end{equation}
where $P_nA$ is the submatrix of $A$ consisting of its first $nr$ rows, 
and $BP_n$ is the submatrix of $B$ consisting of its first $nr$ columns. Recall that $G$ is given by \eqref{eq:wi}.

Now we define operator norms, and discuss inequalities regarding these norms.

\begin{definition}
For a compact operator $A$ on a Hilbert space, and $1\leq p\leq \oo$, 
let $\|A\|_p$ denote the $p$-norm of the eigenvalue-sequence 
of $(A^*A)^{\frac{1}{2}}$, 
where $A^*$ is the conjugate transpose of $A$. 
The $\infty$-norm is the usual operator norm and is so defined even if $A$ is not compact. The
$2$-norm is the Hilbert--Schmidt norm, and the $1$-norm is the trace-norm.
The set of compact operators with finite $p$-norm is denoted by $\varPhi_p$. 
Thus $\varPhi_1$ is the set
of  operators of trace class; 
$\varPhi_2$ is the set of Hilbert--Schmidt operators; 
and $\varPhi_{\infty}$ is the set of compact operators.
\end{definition}

As in  \cite[Sect.\ 2]{HW}, we have the following lemma
\begin{lemma}\mbox{}
\begin{letlist}
\item Let $1\leq p\leq \infty$. If $A\in\varPhi_{p}$ and $B,C\in\varPhi_{\infty}$, then $BAC\in\varPhi_p$. Moreover, 
\begin{align*}
\|BAC\|_p\leq \|A\|_p\|B\|_{\oo}\|C\|_{\oo}.
\end{align*}
\item If $A,B\in\varPhi_2$, then $AB\in \varPhi_1$. Moreover,
\begin{align*}
\|AB\|_1\leq \|A\|_2\|B\|_2.
\end{align*}
\end{letlist}
\end{lemma}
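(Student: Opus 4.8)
The plan is to prove both inequalities as instances of the standard Schatten-ideal estimates, reducing everything to the behaviour of singular values under multiplication by bounded operators. Throughout, for a compact operator $T$ I order its singular values $s_1(T)\ge s_2(T)\ge\cdots$ (the eigenvalues of $(T^*T)^{1/2}$), and I use the approximation-number characterization
\[
s_n(T)=\min\bigl\{\|T-F\|_{\oo}:\operatorname{rank}F\le n-1\bigr\},
\]
which is valid for compact $T$ on a Hilbert space. Here $\|T\|_p$ is precisely the $\ell^p$-norm of the sequence $(s_n(T))_n$, with the convention that the $\ell^{\oo}$-norm is $\sup_n s_n(T)=\|T\|_{\oo}$.

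For part (a), the key step is the singular-value inequality $s_n(BAC)\le\|B\|_{\oo}\,\|C\|_{\oo}\,s_n(A)$. To obtain it, fix $F$ with $\operatorname{rank}F\le n-1$; then $BFC$ also has rank at most $n-1$, and
\[
s_n(BAC)\le\|BAC-BFC\|_{\oo}=\|B(A-F)C\|_{\oo}\le\|B\|_{\oo}\,\|C\|_{\oo}\,\|A-F\|_{\oo}.
\]
Taking the infimum over all such $F$ gives the bound. Raising to the $p$th power and summing over $n$ (or taking the supremum when $p=\oo$, which is just submultiplicativity of the operator norm) then yields
\[
\|BAC\|_p=\Bigl(\sum_n s_n(BAC)^p\Bigr)^{1/p}\le\|B\|_{\oo}\,\|C\|_{\oo}\,\|A\|_p,
\]
and in particular $BAC\in\varPhi_p$ whenever $A\in\varPhi_p$ and $B,C\in\varPhi_{\oo}$.

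For part (b), I would deduce $\|AB\|_1\le\|A\|_2\|B\|_2$ from the Cauchy--Schwarz inequality for the Hilbert--Schmidt inner product $\langle X,Y\rangle=\tr(X^*Y)$. Writing the polar decomposition $AB=U|AB|$ with $U$ a partial isometry, one has $|AB|=U^*AB$, so that
\[
\|AB\|_1=\tr|AB|=\tr(U^*AB)=\langle A^*U,B\rangle,
\]
whence $\|AB\|_1\le\|A^*U\|_2\,\|B\|_2$. It remains to check $\|A^*U\|_2\le\|A\|_2$. Since $UU^*$ is an orthogonal projection we have $UU^*\le I$, and hence, using $AA^*\ge0$ and cyclicity of the trace,
\[
\|A^*U\|_2^2=\tr(U^*AA^*U)=\tr\bigl(AA^*\,UU^*\bigr)\le\tr(AA^*)=\|A\|_2^2.
\]
This gives the inequality and shows $AB\in\varPhi_1$ when $A,B\in\varPhi_2$.

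The argument is essentially routine, and both conclusions are recorded in \cite[Sect.\ 2]{HW} in exactly this form. The only point requiring care is the invocation of the approximation-number formula for $s_n$ in part (a), since that is where the passage from the operator norm to the full singular-value sequence takes place; everything else reduces to elementary trace manipulations.
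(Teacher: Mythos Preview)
Your argument is correct and entirely standard. The paper does not actually prove this lemma; it merely records it with the attribution ``As in \cite[Sect.\ 2]{HW}'', so you have supplied the details the paper omits. Your approximation-number proof of (a) and polar-decomposition/Cauchy--Schwarz proof of (b) are precisely the arguments one finds in the Schatten-class literature, and your closing reference to \cite[Sect.\ 2]{HW} matches the paper's own citation.

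One small remark on presentation: in (b) the cyclicity step $\tr(U^*AA^*U)=\tr(AA^*\,UU^*)$ is justified because $AA^*$ is trace class (product of two Hilbert--Schmidt operators) and $U$, $U^*$ are bounded, so the trace-class cyclic identity $\tr(ST)=\tr(TS)$ (with $S$ trace class, $T$ bounded) applies; you might state this explicitly, since the paper's hypothesis in (a) has $B,C\in\varPhi_{\oo}$ rather than merely bounded, and the partial isometry $U$ need not be compact.
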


\begin{lemma}
Assume that $\psi(\zeta)$ is a $C^{\infty}$ $r\times r$ matrix-valued function on the unit circle
with exponential decaying Fourier coefficients. 
Assume also that $\psi$ has a factorization given by \eqref{ppm}.
 Then $\psi_+^{\pm 1}$, $\psi_-^{\pm1}$, $\wt{\psi}_{+}^{\pm1}$,
$\wt{\psi}_-^{\pm1}$ are all $C^{\infty}$ $r\times r$ matrix-valued functions on the unit circle with exponential decaying 
Fourier coefficients.
\end{lemma}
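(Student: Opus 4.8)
The plan is to convert the hypotheses into statements about holomorphic continuation off the unit circle and then to glue the one-sided pieces together across the circle by a Morera-type argument. Recall the standard fact that a matrix-valued function on $\{|z|=1\}$ has exponentially decaying Fourier coefficients if and only if it extends to a holomorphic function on some annulus $\{1/\rho<|z|<\rho\}$ with $\rho>1$, and that any such extension is automatically $C^\infty$ (indeed real-analytic) on the circle; hence it suffices to show that each of $\psi_+^{\pm1}$, $\psi_-^{\pm1}$ extends holomorphically to a neighbourhood of $\{|z|=1\}$, the four $\wt{(\cdot)}$-statements then following from the remark that $g\mapsto\wt g$, with $\wt g(z)=g(z^{-1})$, preserves this property. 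By hypothesis $\psi$ extends holomorphically to some annulus $\{1/\rho<|z|<\rho\}$; since $\psi=\psi_+\psi_-$ on the circle with both factors invertible in $A_r\cap K_r$, $\det\psi$ is continuous and non-vanishing on $\{|z|=1\}$, so after shrinking $\rho$ we may assume $\psi$, and therefore $\psi^{-1}$, is holomorphic on the whole annulus. Moreover, because $\|\cdot\|_{A_r\cap K_r}$ dominates the Wiener norm and the factors are one-sided, $\psi_+$ and $\psi_+^{-1}$ are holomorphic on $\{|z|<1\}$ and continuous up to $\{|z|=1\}$, while $\psi_-$ and $\psi_-^{-1}$ are holomorphic on $\{|z|>1\}$ and continuous up to $\{|z|=1\}$.

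The core step is the gluing. Consider $h:=\psi\,\psi_-^{-1}$, which is holomorphic on $\{1<|z|<\rho\}$ and continuous up to $\{|z|=1\}$, where the factorization gives $h=\psi_+$; thus the function that equals $\psi_+$ on $\{|z|\le1\}$ and $h$ on $\{1\le|z|<\rho\}$ is well-defined, continuous on the disc $\{|z|<\rho\}$, and holomorphic off the circle $\{|z|=1\}$, hence holomorphic on all of $\{|z|<\rho\}$ by Morera's theorem, so $\psi_+$ extends holomorphically there. The same device applied to $\psi_-=\psi_+^{-1}\psi$ extends $\psi_-$ holomorphically to $\{|z|>1/\rho\}$; applied to $\psi_+^{-1}=\psi_-\psi^{-1}$, using the holomorphy of $\psi^{-1}$ on the annulus, it extends $\psi_+^{-1}$ to $\{|z|<\rho\}$; and, once $\psi_+$ is known holomorphic on $\{|z|<\rho\}$, applied to $\psi_-^{-1}=\psi^{-1}\psi_+$ it extends $\psi_-^{-1}$ to $\{|z|>1/\rho\}$. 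Each of the four factors is now holomorphic on a neighbourhood of $\{|z|=1\}$, which by the first paragraph yields the claimed $C^\infty$ property and exponential decay of Fourier coefficients, and hence also for the $\wt{(\cdot)}$-versions.

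I expect the only delicate point to be the justification that a function continuous on a disc and holomorphic off a smooth arc is holomorphic across that arc (the Morera-across-a-curve statement), together with the bookkeeping of the order of the four continuations, since the continuation of $\psi_-^{-1}$ relies on the already-continued $\psi_+$ and the continuations of the two inverses rely on the holomorphy of $\psi^{-1}$ on the annulus obtained from the non-vanishing of $\det\psi$ on the circle. Everything else is routine; this is in essence the assertion that a canonical Wiener--Hopf factorization respects the passage to a smaller decomposing algebra.
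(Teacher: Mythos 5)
Your proof is correct, and it takes a genuinely different route from the paper's. The paper argues operator-theoretically: it quotes Widom \cite{HW} for the fact that the non-negative Fourier coefficients of $\wt{\psi}_-^{-1}$ form the first block column of $T(\wt{\psi})^{-1}$, invokes Strohmer \cite[Thm 1.3]{TS02} for the exponential off-diagonal decay of the entries of that inverse, and then asserts that the decay and smoothness of the remaining factors follow. You instead use the classical function-theoretic characterization (exponential Fourier decay $\Leftrightarrow$ holomorphic extension to an annulus), observe that invertibility of $\psi_\pm$ in the algebra forces $\det\psi\neq 0$ on the circle so that $\psi^{-1}$ is also holomorphic on a (possibly smaller) annulus, and then analytically continue each one-sided factor across the circle by gluing it to the appropriate product (e.g.\ $\psi_+=\psi\,\psi_-^{-1}$ on the outside) via the Morera/Painlev\'e lemma. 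Your argument is essentially the classical statement that a canonical Wiener--Hopf factorization of an analytic symbol has analytic factors; it is self-contained apart from the Morera-across-an-arc lemma, and it treats all four factors (and their tilde versions) uniformly, whereas the paper's proof only spells out $\wt{\psi}_-^{-1}$ and leaves the rest implicit. What the paper's route buys is a direct statement about the entries of $T(\wt{\psi})^{-1}$, which is the object actually manipulated in the subsequent trace-norm estimates; your route buys independence from \cite{TS02} and a cleaner conceptual explanation of why the factors inherit the regularity of $\psi$. The ordering you flag (continue $\psi_+$ and $\psi_+^{-1}$ first using $\psi$ and $\psi^{-1}$ on the annulus, then $\psi_-^{\pm1}$) is handled correctly, and the passage to $\wt{\psi}_\pm^{\pm1}$ via $z\mapsto z^{-1}$ is immediate, so there is no gap.
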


\begin{proof}
By the arguments of \cite[p.\ 10]{HW}, when $T(\wt{\psi})$ is invertible,
$\wt{\psi}_{-}^{-1}$ is a matrix-valued function whose sequence of non-negative Fourier 
coefficients is $T^{-1}(\wt{\psi})(I,0,0,\dots)$. By  \cite[Thm 1.3]{TS02},   
the entries in the first column of $T(\wt{\psi})^{-1}$ decay exponentially as the 
entry moves away from the diagonal, whence the $n$th Fourier coefficient of $\wt{\psi}_{-}^{-1}$ 
decays to zero exponentially as $|n|\to\oo$.

The exponential decay of Fourier coefficients and smoothness of $\psi_+^{\pm 1}$, $\psi_-^{\pm1}$, $\wt{\psi}_{+}^{\pm1}$,
$\wt{\psi}_-^{\pm1}$ follow.
\end{proof}

By explicit computations (see \cite{HW}), 
\begin{equation*}
H(\psi)H(\wt{\psi}_{-}^{-1})T(\psi_{+}^{-1})=H(\psi)H(\wt{\psi}^{-1}).
\end{equation*}
Moreover,
\begin{align*}
\Bigl\|H(\psi)H(\wt{\psi}_{-}^{-1})P_n&T(\psi_{+}^{-1})-H(\psi)H(\wt{\psi}_{-}^{-1})T(\psi_{+}^{-1})\Bigr\|_1\\
&=\bigl\|H(\psi)H(\wt{\psi}_{-}^{-1})(P_n-I)T(\psi_{+}^{-1})\bigr\|_1\\
&\leq\bigl\|H(\psi)H(\wt{\psi}_{-}^{-1})(P_n-I)\bigr\|_1\cdot \|T(\psi_{+}^{-1})\|_{\oo}\\
&\leq\|H(\psi)\|_2\cdot\|H(\wt{\psi}_{-}^{-1})(P_n-I)\|_2\cdot\|T(\psi_{+}^{-1})\|_{\oo}.
\end{align*}

\begin{lemma}
Assume that $\psi$ is a $r\times r$ matrix-valued, $C^{\infty}$ function on the unit circle
with exponential decaying Fourier coefficients, and assume that $\psi$ has a factorization given by (\ref{ppm}).
Let $\tr(A)$ be the trace of $A$. Then there exists $0<\beta_1<1$ such that
\begin{align}
\|T(\psi_+^{-1})\|_{\oo}&<\oo,\label{in1}\\
\|H(\psi)\|_2&=\bigl[\tr(H^*(\psi)H(\psi))\bigr]^{\frac{1}{2}}<\oo,\label{in2}\\
\|H(\wt\psi_{-}^{-1})(P_n-I)\|_2&=
\tr\bigl[(P_n-I)H^*(\wt{\psi}_{-}^{-1})H(\wt{\psi}_{-}^{-1})(P_n-I)\bigr]^{\frac{1}{2}}<\beta_1^n.\label{in3}
\end{align}
\end{lemma}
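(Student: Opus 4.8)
The three bounds all follow from the preceding lemma, which (under the standing assumption that $T(\wt{\psi})$ is invertible, equivalently that the factorization \eqref{ppm} exists) guarantees that $\psi$ and each of $\psi_+^{\pm1}$, $\psi_-^{\pm1}$, $\wt\psi_+^{\pm1}$, $\wt\psi_-^{\pm1}$ is a $C^{\infty}$ matrix-valued function on the unit circle whose Fourier coefficients decay exponentially fast. The plan is to establish \eqref{in1}, \eqref{in2}, \eqref{in3} in turn, the first two being routine and the third carrying the content.

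For \eqref{in1}: a block Toeplitz operator $T(g)$ acting on $\ell_2$-sequences of $r$-vectors has operator norm at most the essential supremum of $\|g(\zeta)\|_\oo$ over $|\zeta|=1$; applying this to $g=\psi_+^{-1}$, which is continuous (indeed $C^\infty$) on the compact unit circle, gives $\|T(\psi_+^{-1})\|_\oo<\oo$. For \eqref{in2}: directly from the definition of the Hankel matrix, $\tr\bigl(H^*(\psi)H(\psi)\bigr)=\sum_{i,j\ge0}\|\psi_{i+j+1}\|^2=\sum_{m\ge1}m\,\|\psi_m\|^2$, where $\|\cdot\|$ is the Hilbert--Schmidt norm of the $r\times r$ Fourier-coefficient matrix, using that exactly $m$ ordered pairs $(i,j)$ with $i,j\ge0$ obey $i+j+1=m$; exponential decay of $\|\psi_m\|$ makes this series finite, which is \eqref{in2}.

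For \eqref{in3}: right-multiplication by $P_n-I$ deletes the first $n$ block columns, so $H(\wt\psi_-^{-1})(P_n-I)$ has block entries $(\wt\psi_-^{-1})_{i+j+1}$ only for block-row $i\ge0$ and block-column $j\ge n$; hence
\[
\bigl\|H(\wt\psi_-^{-1})(P_n-I)\bigr\|_2^2=\sum_{i\ge0,\ j\ge n}\bigl\|(\wt\psi_-^{-1})_{i+j+1}\bigr\|^2\le\sum_{m\ge n+1}m\,\bigl\|(\wt\psi_-^{-1})_m\bigr\|^2 .
\]
By the preceding lemma $\|(\wt\psi_-^{-1})_m\|\le C\gamma^m$ for suitable $C>0$ and $\gamma\in(0,1)$, so summing the arithmetico-geometric series bounds the right side by $C'(n+1)\gamma^{2(n+1)}$. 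Fixing any $\beta_1\in(\gamma,1)$, one then has $\sqrt{C'(n+1)}\,\gamma^{\,n+1}<\beta_1^n$ for all $n$ sufficiently large (and the remaining finitely many $n$ are irrelevant in the application, where $n\to\oo$), which yields \eqref{in3}.

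The only substantive step is \eqref{in3}, and within it the \emph{exponential} decay of the Fourier coefficients of $\wt\psi_-^{-1}$: this does not follow from smoothness of $\psi$ alone, but was obtained in the preceding lemma by identifying the non-negative Fourier coefficients of $\wt\psi_-^{-1}$ with the first block column of $T(\wt\psi)^{-1}$ and invoking the exponential off-diagonal decay of inverses of block Toeplitz matrices with analytic symbol, \cite[Thm~1.3]{TS02}. The main care required is bookkeeping: checking that the index conventions for $P_n$, $H(\cdot)$ and the Fourier coefficients used here match those entering \eqref{dq}, and that the hypotheses of \cite[Thm~1.3]{TS02}---in particular invertibility of $T(\wt\psi)$, the standing assumption of this part of the argument---are genuinely in force.
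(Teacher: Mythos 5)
Your proof is correct, and for \eqref{in2} and \eqref{in3} it is exactly the explicit trace computation that the paper's proof alludes to in one line: the key inputs are the identity $\tr\bigl(H^*(g)H(g)\bigr)=\sum_{m\ge1}m\|g_m\|^2$ (with the column truncation by $P_n-I$ restricting to $m\ge n+1$) and the exponential decay of the Fourier coefficients of $\wt\psi_-^{-1}$, which, as you correctly emphasize, comes from the preceding lemma via \cite[Thm 1.3]{TS02} and not from smoothness of $\psi$ alone. The one place you genuinely diverge is \eqref{in1}: you use the standard bound $\|T(g)\|_\oo\le\sup_{|\zeta|=1}\|g(\zeta)\|_\oo$ for the continuous symbol $g=\psi_+^{-1}$, whereas the paper bounds the largest eigenvalue of $A=[T(\psi_+)^{-1}]^*T(\psi_+)^{-1}$ by the row-sum (Schur) estimate $\max_j\sum_i|a_{ij}|$, using exponential off-diagonal decay of $T(\psi_+)^{-1}$. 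Both are valid (and agree because $T(\psi_+^{-1})=T(\psi_+)^{-1}$ for a one-sided factor); yours is the cleaner and more general route. Your caveat that the strict inequality $<\beta_1^n$ in \eqref{in3} may require either $n$ large or a re-choice of $\beta_1$ is a fair observation that the paper passes over silently, and it is indeed harmless in the application where only the regime $n\to\oo$ matters.
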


\begin{proof}
Note that $\|T(\psi_+^{-1})\|_{\oo}$ is exactly the maximal singular 
value of $T(\psi_+^{-1})$. Note also that $T(\psi_+)^{-1}$ 
is an 
upper triangular matrix whose off-diagonal entries decay exponentially fast. 
Let
\begin{equation*}
A=[T(\psi_+)^{-1}]^*T(\psi_+)^{-1}=(a_{i,j})_{i,j\in \NN}.
\end{equation*}
It is not hard to check that the off-diagonal entries of $A$ decay exponentially fast 
with respect to their distance to the diagonal. Moreover, 
$\|T(\psi_+^{-1})\|_{\oo}^2$ is the maximal eigenvalue of $A$. 
It is well known that
\begin{equation*}
\|T(\psi_+^{-1})\|_{\oo}^2\leq\max_{j} \sum_{i=1}^{\infty}|a_{ij}|<\infty,
\end{equation*}
and \eqref{in1} follows.
The inequality \eqref{in2} is obtained since the diagonal entries of 
$H^*(\psi)H(\psi)$ decay exponentially when moving down the diagonal. 
Equation  \eqref{in3} is similar.
\end{proof}

Now,
\begin{align}\label{exd1}
\Bigl\|P_nH(\psi)H(&\wt{\psi}^{-1})P_n-H(\psi)H(\wt{\psi}^{-1})\Bigr\|_1\\
&\leq \bigl\|P_nH(\psi)H(\wt{\psi}^{-1})(P_n-I)\bigr\|_1+\bigl\|(P_n-I)H(\psi)H(\wt{\psi}^{-1})\bigr\|_1\notag\\
&\leq \|P_n\|_{\oo}\cdot\|H(\psi)H(\wt{\psi}^{-1})(P_n-I)\|_1+\|(P_n-I)H(\psi)\|_2\cdot\|H(\wt{\psi}^{-1})\|_2\notag\\
&\leq\|H(\psi)\|_2\cdot\|H(\wt{\psi}^{-1})(P_n-I)\|_2+\|(P_n-I)H(\psi)\|_2\cdot\|H(\wt{\psi}^{-1})\|_2\notag\\
&\leq \beta_2^n,\notag
\end{align}
where $0<\beta_2<1$. 
The last inequality  holds because the $(i,j)$ entries of 
$H(\psi)$ and $H(\wt{\psi})$ decay exponentially in $i+j$.

The following cases may occur:
\begin{letlist}
\item[I.] $I-H(\psi)H(\wt{\psi}^{-1})$ is invertible,
\item[II.] $I-H(\psi)H(\wt{\psi}^{-1})$ is not invertible.
\end{letlist}

\par\noindent
\emph{Assume Case I occurs.}
By the formula of \cite[p.\ 116]{ggk},
\begin{align}
\Bigl|\det\bigl(I-P_nH(\psi)&H(\wt{\psi}_{-}^{-1})P_nT(\psi_{+}^{-1})P_n\bigr)-
\det\bigl(I-H(\psi)H(\wt{\psi}^{-1})\bigr)\Bigr|\label{es2}\\
&\leq e^{\|H(\psi)H(\wt{\psi}^{-1})\|_1}
(e^{Q_n}-1)
\notag\\
&\leq \beta_3^n,\notag
\end{align}
where 
$$
Q_n:=\left\|\bigl(I-H(\psi)H(\wt{\psi}^{-1})\bigr)^{-1}\Bigl(P_nH(\psi)H(\wt{\psi}_{-}^{-1})P_nT(\wt{\psi}_{+}^{-1})P_n-H(\psi)H(\wt{\psi}^{-1})\Bigr)\right\|_1
$$
and $0<\beta_3<1$.

\par\noindent
\emph{Assume  Case II occurs.}
We follow the approach of \cite[pp.\ 116--117]{ggk}. 
When $I-H(\psi)H(\wt{\psi}^{-1})$ is not invertible, the point 1 is an isolated 
eigenvalue of finite type for $H(\psi)H(\wt{\psi}^{-1})$. Let $P$ be the 
corresponding Riesz projection, and put $H_1=\mathrm{Im}\,P$, $H_2=\mathrm{Ker}\,P$, 
so that $H_1$ is finite-dimensional. For simplicity, we write
\begin{align*}
F_n&:=P_nH(\psi)H(\wt{\psi}_{-}^{-1})P_nT(\psi_{+}^{-1})P_n,\\
A&:=H(\psi)H(\wt{\psi}^{-1}).
\end{align*}
With respect to the decomposition $H=H_1\oplus H_2$, 
we have
\begin{equation*}
F_n=\begin{pmatrix} 
K_{11}^{(n)} &K_{12}^{(n)}\\
K_{21}^{(n)}&K_{22}^{(n)}
\end{pmatrix},\qq 
A=\begin{pmatrix}
A_{11}&0\\
0&A_{22}
\end{pmatrix},
\end{equation*}
and it follows  as in \eqref{exd1} that
\begin{equation*}
\|F_n-A\|_2\leq \|F_n-A\|_1\leq \beta_4^n,
\end{equation*}
for some $0<\beta_4<1$. Moreover,
\begin{equation*}
(F_n-A)^*(F_n-A) =
\begin{pmatrix}
L_1 & \cdots\\
\cdots & L_2
\end{pmatrix},
\end{equation*}
where
\begin{align*}
L_1 &= (K_{11}^{(n)}-A_{11})^*(K_{11}^{(n)}-A_{11})+(K_{21}^{(n)})^*K_{21}^{(n)},\\
L_2 &=(K_{12}^{(n)})^*K_{12}^{(n)}+(K_{22}^{(n)}-A_{22})^*(K_{22}^{(n)}-A_{22}).
\end{align*}
Therefore,
\begin{align*}
\|F_n-A\|_2^2&=\tr\left((K_{11}^{(n)}-A_{11})^*(K_{11}^{(n)}-A_{11})\right)+\tr\left((K_{21}^{(n)})^*K_{21}^{(n)}\right)\\
&\hskip1cm +\tr\left((K_{22}^{(n)}-A_{22})^*(K_{22}^{(n)}-A_{22})\right)+\tr\left((K_{12}^{(n)})^*K_{12}^{(n)}\right)\\
&=\|K_{11}^{(n)}-A_{11}\|_2^2+\|K_{21}^{(n)}\|_2^2+\|K_{12}^{(n)}\|_2^2+\|K_{22}^{(n)}-A_{22}\|_2^2.
\end{align*}
Hence,
\begin{align*}
\|K_{11}^{(n)}-A_{11}\|_2&\leq \beta_4^n,\\
\|K_{12}^{(n)}\|_2&\leq \beta_4^n,\\
\|K_{21}^{(n)}\|_2&\leq \beta_4^n.
\end{align*}

As in \cite[pp.\ 116--117]{ggk}, with each $I_j$ an identity matrix of suitable size,
\begin{equation*}
\det(I-F_n)=\det\left(I_1-K_{11}^{(n)}+K_{12}^{(n)}(I_2-K_{22}^{(n)})^{-1}K_{21}^{(n)}\right)\det \bigl(I_2-K_{22}^{(n)}\bigr),
\end{equation*}
and
\begin{equation*}
\lim_{n\to\oo}\det\bigl (I_2-K_{22}^{(n)}\bigr)=\det(I-A_{22}).
\end{equation*}
Moreover, 
\begin{align*}
\Bigl\|I_1-K_{11}^{(n)}+{}&K_{12}^{(n)}(I_2-K_{22}^{(n)})^{-1}K_{21}^{(n)}-(I_1-A_{11})\Bigr\|_1\\
&\leq \|K_{11}^{(n)}-A_{11}\|_1+\|K_{12}^{(n)}\|_2\cdot\|I_2-K_{22}^{(n)}\|_{\oo}
\cdot\|K_{21}^{(n)}\|_2\\
&\leq \beta_6^n,
\end{align*}
for some $0<\beta_6<1$. Since $I_1-K_{11}^{(n)}+K_{12}^{(n)}(I_2-K_{22}^{(n)})^{-1}K_{21}^{(n)}$
 is a finite matrix, all the norms are equivalent,  we have  
\begin{equation*}
\left|\det\left(I_1-K_{11}^{(n)}+K_{12}^{(n)}(I_2-K_{22}^{(n)})^{-1}K_{21}^{(n)}\right)-\det(I_1-A_{11})\right|\leq \beta_7^n,
\end{equation*}
for some $0< \beta_7<1$.

Since (by Lemma \ref{lem:lem102}) $G(\psi)=1$, we obtain that, when $T(\wt{\psi})$ is invertible, $\langle \si_e\si_f \rangle^2$ converges to its limit exponentially fast in the limit
as  $|e-f|\to\oo$. This completes the consideration of Case II.
\medskip

\par\noindent
{\bf Assume now that $T(\wt{\psi})$ is not invertible.} 
Since $T(\wt{\psi})$ is Fredholm of index $0$, by \cite{HW}, 
there exists $\phi$ with only finitely many non-zero Fourier coefficients, 
such that $T(\wt{\psi}+\eps\wt{\phi})$ is invertible for  
sufficiently small $\eps\neq 0$. 
Therefore,
\begin{equation*}
\lim_{n\to\oo}\frac{\det T_n(\psi+\eps\phi)}{G(\psi+\eps\phi)^{n+1}}=\det \bigl[T(\psi+\eps\phi)T\bigl((\psi+\eps\phi)^{-1}\bigr)\bigr],
\end{equation*}
for $\eps$ belonging to some punctured disk with centre $0$. 
Using the same arguments as above we obtain that, for 
sufficiently small  $|\eps|>0$,
\begin{equation}
\left|\frac{\det T_n(\psi+\eps\phi)}{G(\psi+\eps\phi)^{n+1}}-
\det \bigl[T(\psi+\eps\phi)T\bigl((\psi+\eps\phi)^{-1}\bigr)\bigr]
\right|\leq \beta_5^n\label{tg}
\end{equation}
for some $0<\beta_5<1$ independent of $\eps$. 
This is obtained by first
expressing the ratio on the left side of \eqref{tg} as in 
\eqref{dq}, then
follow the previous argument. Note that $\beta_5$ depends on the
exponential decay rate of $(\psi+\eps\phi)_k$ as $k\rightarrow\infty$, which can be made
universal for $\eps$ with $|\eps|$ sufficiently small.
 Since the component
functions on the left side of \eqref{tg} are analytic in $\eps$ on some disk $\{\epsilon\in\CC:|\epsilon|\leq c_0\}$, 
by the maximum principle, \eqref{tg} holds for $\eps=0$ also. 
Hence $\langle\si_e\si_f\rangle^2$ converges to its limit exponentially fast. 
The proof of Theorem \ref{ms8}  is complete. 

\section{Alternative proof of Theorem \ref{thm:main}($\mathrm{b}$)}
\label{sec:eecad}

We find it slightly more convenient to work here with horizontal edges
rather than NW edges, and there is no essential difference in the proof. 
Let $e, f\in\EE$ be horizontal edges.  There exists a horizontal edge $g \in \EE$ such that:
$e$ and $g$ 
(\resp, $f$ and $g$) are connected by a path $\ell_{eg}$ (\resp, $\ell_{fg}$) of $A\HH$ comprising 
only horizontal and NW (\resp, NE) edges, and the unique common edge of
$\ell_{eg}$ and $\ell_{fg}$ is $g$.  Let $m+1$ (\resp, $n+1$) be the number
of horizontal edges in $\ell_{eg}$ (\resp, $\ell_{fg}$). Write $m \wedge n=\min\{m,n\}$
and $m \vee n = \max\{m,n\}$.

\begin{theorem}\label{thm:main4}
Assume the parameters of the 1-2 model satisfy
\begin{equation}\label{eq:ass5}
 a\geq b>0,  \q c>0,\q \sqrt{a}\neq \sqrt{b}+\sqrt{c},\q  \sqrt{c}\neq\sqrt{a}+\sqrt{b}.
 \end{equation}
\begin{letlist}
\item The limit 
$$
\La(a,b,c):= \lim_{m,n\to\oo}\langle\si_e\si_f\rangle^2
$$
exists, and
\begin{equation}
\left|\langle\si_e\si_f \rangle^2-\La\right|
\leq C\alpha^{m \wedge n},\label{dr}
\end{equation}
where $C>0$, $\alpha\in(0,1)$ are constants independent of $m$, $n$. 

\item If, in addition, $\sqrt{a}<\sqrt{b}+\sqrt{c}$ and $\sqrt{c}<\sqrt{a}+\sqrt{b}$,
then $C>0$, $\al\in(0,1)$ may be chosen such that
\begin{equation}
|\langle \si_e\si_f \rangle|\leq C\alpha^{|e-f|}.\label{eds}
\end{equation}
\end{letlist}
\end{theorem}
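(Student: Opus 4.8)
The plan is to mimic, separately along the two arms $\ell_{eg}$ and $\ell_{fg}$ of the path $\pi(e,f)=\ell_{eg}\cup\ell_{fg}$ through $g$, the dimer/Pfaffian analysis of Sections~\ref{sec:morepf}--\ref{sec:pf32}, obtaining a block Toeplitz determinant that is governed by \emph{two} applications of Widom's formula, one per arm, and then to deduce (b) from (a) by locating where the limit vanishes.

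\textbf{Part (a).}
First I would set up the representation. Using the dimer correspondence of Section~\ref{sec:dimer}, the clockwise-odd orientation of Figure~\ref{fig:ofd0}, and the Pfaffian identities of Section~\ref{sec:morepf} (Lemmas~\ref{pfc}--\ref{lem:pf2}), adapted to a path that turns a corner at $g$ and whose bisector edges carry weight $c$ along $\ell_{eg}$ and weight $b$ along $\ell_{fg}$, I obtain
\begin{equation*}
\langle\si_e\si_f\rangle^2=\det M_{m,n},\qquad
M_{m,n}=\begin{pmatrix} T_m(\psi_1) & H_{m,n}\\ \mp H_{m,n}^{\mathsf T} & T_n(\psi_2)\end{pmatrix},
\end{equation*}
where $T_m(\psi_1)$ is a truncated block Toeplitz matrix (of size $\asymp m$) built from the NW direction --- so that $\det T_m(\psi_1)=\langle\si_e\si_g\rangle^2$, which is exactly the one-arm situation of Theorem~\ref{anly} --- and $T_n(\psi_2)$ its NE counterpart with $\det T_n(\psi_2)=\langle\si_g\si_f\rangle^2$, while the ``Hankel-type'' off-diagonal blocks have $(i,j)$ entry bounded by $|[K^{-1}]_{u,v}|$ with $u$ a vertex at position $\asymp i$ on the first arm and $v$ at position $\asymp j$ on the second, hence at lattice distance $\asymp i+j$. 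Under \eqref{eq:ass5} the hypotheses of Proposition~\ref{prop:Pzero} are met --- note that its case (ii), $\sqrt b=\sqrt a+\sqrt c$, is impossible since $a\ge b>0$ and $c>0$ --- so the spectral curve misses the unit torus; as in \eqref{k1i} and Lemma~\ref{lem:lem102}, this makes $\psi_1,\psi_2$ smooth on the unit circle with exponentially decaying Fourier coefficients and $\det\psi_1\equiv\det\psi_2\equiv1$, so $G(\psi_1)=G(\psi_2)=1$ in the notation of Theorem~\ref{wi} (and the winding numbers vanish). I would then run the argument of Theorem~\ref{ms8} in this two-parameter form: Wiener--Hopf factorize $\psi_i=\psi_{i,+}\psi_{i,-}$, use $G=1$ to rewrite $\det M_{m,n}=\det(I-F_{m,n})$ with $F_{m,n}$ assembled from the two families of Hankel operators and the coordinate projections $P^{(1)}_m$, $P^{(2)}_n$ onto the two arms, and estimate via the $\varPhi_1$/$\varPhi_2$ inequalities of Section~\ref{sec:pf32} that replacing $P^{(1)}_m$ by the identity costs $O(\alpha^m)$ and $P^{(2)}_n$ by the identity costs $O(\alpha^n)$ in trace norm (the cross-blocks being killed by either truncation, since their entries decay in $i+j$). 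Splitting into the two cases according as $I$ minus the relevant limiting Hankel product is invertible --- the non-invertible case handled by the Riesz projection device, and the possible non-invertibility of $T(\wt\psi_i)$ by the finite-rank perturbation, exactly as in Section~\ref{sec:pf32} --- yields that $\det M_{m,n}$ converges to a limit $\La(a,b,c)$ with $|\langle\si_e\si_f\rangle^2-\La(a,b,c)|\le C(\alpha^m+\alpha^n)\le 2C\,\alpha^{m\wedge n}$, constants depending only on $a,b,c$. This is \eqref{dr}.

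\textbf{Part (b).}
It now suffices to show that $\La(a,b,c)=0$ under the extra hypotheses, and that $g$ may be chosen so that $m\wedge n\ge c_0|e-f|$ for some $c_0>0$. For the first, as in the proof of Theorem~\ref{anly} and the remark following it, $\La(a,b,\cdot)$ extends to a function that is complex analytic on $C_2=\bigl((\sqrt a-\sqrt b)^2,(\sqrt a+\sqrt b)^2\bigr)$, since by \eqref{k1i} the only singularities of the entries of $K_1^{-1}$ on the unit torus occur when $\sqrt c=\sqrt a\pm\sqrt b$. The hypotheses $\sqrt a<\sqrt b+\sqrt c$ and $\sqrt c<\sqrt a+\sqrt b$ (together with $\sqrt b\le\sqrt a<\sqrt a+\sqrt c$) place $c$ in $C_2$, and the non-empty open sub-interval $S=(\sqrt{a^2-b^2},\sqrt{a^2+b^2})\subseteq C_2$ consists of parameters with $a^2<b^2+c^2$; by Remark~\ref{rem:subcrit}, which uses only the coupled Ising representation of Section~\ref{partf}, $\langle\si_e\si_f\rangle\to0$ exponentially fast for $c\in S$, so $\La\equiv0$ on $S$, and analyticity forces $\La\equiv0$ on $C_2$, in particular $\La(a,b,c)=0$. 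For the second, a path of $A\HH$ using only horizontal and NW (resp.\ NE) half-edges can realize any net displacement with non-negative vertical component, with length comparable to that displacement; so for horizontal $e,f$ one may take $g$ to be a horizontal edge lying just above both and roughly midway between them, whereupon $\ell_{eg}$ and $\ell_{fg}$ meet only at $g$ and each has length $\asymp|e-f|$, giving $m\wedge n\ge c_0|e-f|$. Combining part (a) with $\La=0$,
\begin{equation*}
\langle\si_e\si_f\rangle^2=\bigl|\langle\si_e\si_f\rangle^2-\La\bigr|\le 2C\,\alpha^{m\wedge n}\le 2C\,\alpha^{c_0|e-f|},
\end{equation*}
and \eqref{eds} follows on taking square roots.

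\textbf{Main obstacle.}
The substantive work is in part (a): organizing the two-arm block Toeplitz structure so that the Hankel/trace-class bookkeeping of Section~\ref{sec:pf32} carries through with constants \emph{uniform in the two truncation parameters $m,n$}, and in particular verifying that the corner-coupling blocks $H_{m,n}$ sit inside the $\varPhi_1$/$\varPhi_2$ framework and that truncating each arm contributes only an error of the asserted form $O(\alpha^{m\wedge n})$. Once part (a) is established with this uniformity, the choice of a ``central'' $g$ together with the analyticity argument makes part (b) essentially immediate.
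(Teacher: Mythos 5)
Your part (a) is essentially the paper's argument: the same two-arm block matrix with truncated block Toeplitz diagonal blocks and cross blocks decaying like $\beta^{i+j}$, the same $\eps$-perturbation/Wiener--Hopf factorization, the same trace-norm bookkeeping with the Riesz-projection device in the non-invertible case, and the same use of Lemma \ref{lem:lem102} to get $G(\psi_i)=1$. That part is fine.

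Part (b) has a genuine gap, and it is exactly the point the paper has to work around. You claim that $g$ may be \emph{chosen} ``roughly midway'' so that $m\wedge n\ge c_0|e-f|$. But there is no freedom in the choice of $g$: a path of $A\HH$ using only horizontal and NW half-edges is rigid, because every interior vertex of $\HH$ on such a path has exactly one horizontal and one NW half-edge and must use both. Hence $\ell_{eg}$ is a segment of the \emph{unique} horizontal/NW zigzag $Z_e$ through $e$, and $\ell_{fg}$ a segment of the unique horizontal/NE zigzag $Z_f$ through $f$; these two transversal zigzags meet in at most one horizontal edge, so $g$, $m$ and $n$ are all determined by $e$ and $f$. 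In particular, when $f$ lies near $Z_e$ (i.e., roughly NW of $e$), the crossing edge $g$ is near $f$ and $n=O(1)$ while $m\asymp|e-f|$, so $m\wedge n$ stays bounded as $|e-f|\to\oo$ and the bound $|\langle\si_e\si_f\rangle^2-\La|\le C\alpha^{m\wedge n}$ from part (a), even combined with $\La=0$, gives no decay at all. Your premise that such paths ``can realize any net displacement with non-negative vertical component'' is false for the same reason.

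What is needed, and what the paper does, is a bound of order $\alpha^{m\vee n}$ rather than $\alpha^{m\wedge n}$ (note $m\vee n\gtrsim|e-f|$ always). This is obtained by freezing one arm: for each fixed $m$ one shows that $\lim_{n\to\oo}\langle\si_e\si_f\rangle^2=\det(I-S_m)\big|_{\eps=0}$ exists with a rate $C\alpha^n$ \emph{uniform in} $m$, that this fixed-$m$ limit is analytic in $(a,b,c)$ on the open region $\sqrt a<\sqrt b+\sqrt c$, $\sqrt b<\sqrt a+\sqrt c$, $\sqrt c<\sqrt a+\sqrt b$, and that it vanishes on the acute subregion $a^2<b^2+c^2$, etc., by Remark \ref{rem:subcrit}; analytic continuation then forces it to vanish on the whole region, for every $m$. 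Repeating with the roles of $m$ and $n$ exchanged and combining the two uniform estimates yields $\langle\si_e\si_f\rangle^2\le C\alpha^{m\vee n}$ and hence \eqref{eds}. Your analyticity-in-$c$ argument shows only that the double limit $\La$ vanishes, which is the wrong quantity for the degenerate geometries above; you would need to run it on the single-arm limits $\det(I-S_m)$ (uniformly in $m$) to close the proof.
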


\begin{proof}
(a)
By a computation similar to that leading to \eqref{eq:236}, $\langle\si_e\si_f\rangle^2$
may be expressed in the form
\begin{align*}
\langle\si_e\si_f\rangle^2=\det\begin{pmatrix}
T_m(\psi_1)& A_{m,n}\\B_{n,m}&T_{n}(\psi_2)\end{pmatrix},
\end{align*}
where $\psi_i$ is a $4 \times 4$ matrix-valued function (see \eqref{psz}),
$T_r(\psi_i)$ is a $4r\times 4r$ truncated block Toeplitz matrix with symbol $\psi_i$,
and  $A_{m,n}$ (\resp, $B_{n, m}$) is a $4m\times 4n$ (\resp, $4n\times 4m$) matrix with entries 
satisfying
\begin{align*}
|A_{m,n}(i,j)|\leq C\beta_1^{i+j},\qq
|B_{n,m}(i,j)|\leq C\beta_1^{i+j},
\end{align*} 
where $C>0$, $\beta_1\in(0,1)$ are constants independent of $i$, $j$.  

Let $\phi_1$, $\phi_2$ be two matrix-valued functions defined on the unit circle with only finitely many non-vanishing Fourier coefficients, such that $T(\wt{\psi}_1+\eps\wt{\phi}_1)$ and 
$T(\wt{\psi}_2+\eps\wt{\phi}_2)$ are invertible for  complex  $\eps\neq 0$
with sufficiently small modulus (recall \eqref{eq:346}).
Let $\eps>0$ be given accordingly, and let
\begin{align*}
\psi_{i,\eps}=\varphi_i+\eps\phi_i,\qq i=1,2.
\end{align*}
Since $T(\wt{\psi}_{i,\eps})$ is invertible, we have (as in Section \ref{sec:pf32}) that
\begin{align*}
\psi_{i,\eps}=\psi_{i,\eps, +}\psi_{i,\eps,-},\qq i=1,2,
\end{align*}
where the $\psi_{i,\eps,\pm}$ are invertible in $A_4\cap K_4$, and $\psi_{i,\eps,+}^{\pm 1}$ (\resp, $\psi_{i,\eps,-}^{\pm 1}$) have Fourier coefficients that vanish for negative (\resp, positive) indices. 

Subject to \eqref{eq:ass5}, by Proposition \ref{prop:Pzero} 
the spectral curve has no zeros on the unit torus. 
As explained after Theorem \ref{wi}, the entries of the Toeplitz matrix (Fourier coefficients of a smooth 
function on the unit circle) decay exponentially as their distances from the
diagonal go to infinity.
 
By computations similar to those of \cite[p.\ 8]{HW},
\begin{align}
&\begin{pmatrix}T_m(\psi_{1,\eps})& A_{m,n}\\ B_{n,m}& T_n(\psi_{2,\eps})\end{pmatrix}
\begin{pmatrix}
T_{m}(\psi_{1,\eps,-}^{-1})T_{m}(\psi_{1,\eps,+}^{-1})&0\\0& T_n(\psi_{2,\eps,-}^{-1})T_n(\psi_{2,\eps,+}^{-1})\end{pmatrix}\label{dt}\\
&\hskip1cm =\begin{pmatrix}
T_m(\psi_{1,\eps})T_{m}(\psi_{1,\eps,-}^{-1})T_{m}(\psi_{1,\eps,+}^{-1})&A_{m,n}T_{n}(\psi_{2,\eps,-}^{-1})T_{n}(\psi_{2,\eps,+}^{-1})\\B_{n,m}T_{m}(\psi_{1,\eps,-}^{-1})T_{m}(\psi_{1,\eps,+}^{-1})&T_n(\psi_{2,\eps})T_{n}(\psi_{2,\eps,-}^{-1})T_{n}(\psi_{2,\eps,+}^{-1})\end{pmatrix}\notag\\
&\hskip1cm =I_{4m+4n} - S_{m,n},\notag
\end{align}
where $I_r$ is the $r\times r$ identity matrix 
and
\begin{equation*}
S_{m,n}=\begin{pmatrix}
P_mH(\psi_{1,\eps})H(\wt{\psi}_{1,\eps,-}^{-1})P_mT_m(\psi_{1,\eps,+}^{-1})&-A_{m,n}T_{n}(\psi_{2,\eps,-}^{-1})T_{n}(\psi_{2,\eps,+}^{-1})\\-B_{n,m}T_{m}(\psi_{1,\eps,-}^{-1})T_{m}(\psi_{1,\eps,+}^{-1})&P_nH(\psi_{2,\eps})H(\wt{\psi}_{2,\eps,-}^{-1})P_nT_n(\psi_{2,\eps,-}^{-1})\end{pmatrix}.
\end{equation*}

We now take determinants of \eqref{dt}.
As in \cite[p.\ 8]{HW}, 
\begin{align*}
\det[T_{m}(\psi_{1,\eps,-}^{-1})]\det[T_{m}(\psi_{1,\eps,+}^{-1})]&=G(\psi_{1,\epsilon,-}^{-1})^{m+1}G(\psi_{1,\epsilon,+}^{-1})^{m+1}\\
&=\frac{1}{G(\psi_{1,\epsilon})^{m+1}},
\end{align*}
whence
\begin{align}
\frac{1}{G(\psi_{1,\eps})^{m+1}G(\psi_{2,\eps})^{n+1}}
\det\begin{pmatrix}
T_m(\psi_{1,\eps})&A_{m,n}\\B_{n,m}&T_{n}(\psi_{2,\eps})\end{pmatrix}
=\det\left(I_{4m+4n}-S_{m,n}\right).\label{dtt}
\end{align}

Let
\begin{align}
S=\begin{pmatrix}
H(\psi_{1,\eps})H(\wt{\psi}_{1,\eps,-}^{-1})T(\psi_{1,\eps,+}^{-1})
&-AT(\psi_{2,\eps,-}^{-1})T(\psi_{2,\eps,+}^{-1})\\
-BT(\psi_{1,\eps,-}^{-1})T(\psi_{1,\eps,+}^{-1})
&H(\psi_{2,\eps})H(\wt{\psi}_{2,\eps,-}^{-1})T(\psi_{2,\eps,-}^{-1})\end{pmatrix},
\label{ds}
\end{align}
where $A$ and $B$ are infinite matrices obtained as the limits of $A_{m,n}$ and $B_{n,m}$, as $m,n\to\oo$. Note that $S$ is a trace-class operator. 
Therefore, $\det(I-S)$ is well-defined and complex analytic in $a,b,c$, 
whenever the entries of $S$ are analytic in $a,b,c$ (see \cite{GK69}, 
and also \cite[Lemma 3.1]{ggk} and  \cite[Lemma 4.6]{ZL1}).
Then,
\begin{align*}
\|S_{m,n}-S\|_1
&\leq \bigl\|A_{m,n}T_{n}(\psi_{2,\eps,-}^{-1})T_{n}(\psi_{2,\eps,+}^{-1})-AT(\psi_{2,\eps,-}^{-1})T(\psi_{2,\eps,+}^{-1})\bigr\|_1\\
&\qq+\bigl\|B_{n,m}T_{m}(\psi_{1,\eps,-}^{-1})T_{m}(\psi_{1,\eps,+}^{-1})-BT(\psi_{1,\eps,-}^{-1})T(\psi_{1,\eps,+}^{-1})\bigr\|_1\\
&\qq+\bigl\|P_mH(\psi_{1,\eps})H(\wt{\psi}_{1,\eps,-}^{-1})P_mT_m(\psi_{1,\eps,+}^{-1})-H(\psi_{1,\eps})H(\wt{\psi}_{1,\eps,-}^{-1})T(\psi_{1,\eps,+}^{-1})\bigr\|_1\\
&\qq+\bigl\|P_nH(\psi_{2,\eps})H(\wt{\psi}_{2,\eps,-}^{-1})P_nT_n(\psi_{2,\eps,-}^{-1})-H(\psi_{2,\eps})H(\wt{\psi}_{2,\eps,-}^{-1})T(\psi_{2,\eps,-}^{-1})\bigr\|_1.
\end{align*}
Using similar arguments as in \eqref{exd1}, we can show that
\begin{align*}
\bigl\|P_mH(\psi_{1,\eps})H(\wt{\psi}_{1,\eps,-}^{-1})P_mT_m(\psi_{1,\eps,+}^{-1})-H(\psi_{1,\eps})H(\wt{\psi}_{1,\eps,-}^{-1})T(\psi_{1,\eps,+}^{-1})\bigr\|_1 &\leq \beta_2^m,\\
\bigl\|P_nH(\psi_{2,\eps})H(\wt{\psi}_{2,\eps,-}^{-1})P_nT_n(\psi_{2,\eps,-}^{-1})-H(\psi_{2,\eps})H(\wt{\psi}_{2,\eps,-}^{-1})T(\psi_{2,\eps,-}^{-1})\bigr\|_1 &\leq \beta_2^n,
\end{align*}
where $\beta_2\in(0,1)$ is a constant independent of $m,n,\eps$. Moreover, 
\begin{align*}
|A(i,j)| \leq C\beta_1^{i+j},\q
|B(i,j)| \leq C\beta_1^{i+j},\q
|T(\psi_*)(i,j)| \leq C\beta_3^{|i-j|},
\end{align*}
subject to \eqref{eq:ass5}. Here,
$\psi_{*}\in\{\psi_{2,\eps,-}^{-1},\psi_{2,\eps,+}^{-1},\psi_{1,\eps,+}^{-1},\psi_{1,\eps,-}^{-1}\}$,
and $\beta_1, \beta_3\in(0,1)$ are independent of $i,j,\eps$.  Therefore,
\begin{alignat*}{4}
\bigl|AT(\psi_{2,\eps,+}^{-1})T(\psi_{2,\eps,-}^{-1})(i,j)\bigr|&&\leq C_1\beta_4^{i+j},&&\qq
\bigl|BT(\psi_{1,\eps,+}^{-1})T(\psi_{1,\eps,-}^{-1})(i,j)\bigr|&&\leq C_1\beta_4^{i+j},\\
|AT(\psi_{2,\eps,+}^{-1})(i,j)|&&\leq C_1\beta_4^{i+j},&&\qq
|AT(\psi_{2,\eps,-}^{-1})(i,j)|&&\leq C_1\beta_4^{i+j},\\
|BT(\psi_{1,\eps,+}^{-1})(i,j)|&&\leq C_1\beta_4^{i+j},&&\qq
|BT(\psi_{1,\eps,-}^{-1})(i,j)|&&\leq C_1\beta_4^{i+j},
\end{alignat*}
where $C_1>0$, $\beta_4\in(0,1)$  are constants independent of $i,j,\eps$.

Hence,
\begin{align*}
\Bigl\|P_mAT(\psi_{2,\eps,+}^{-1})T(\psi_{2,\eps,-}^{-1})P_n-AT(\psi_{2,\eps,-}^{-1})T(\psi_{2,\eps,+}^{-1})\Bigr\|_1\leq C_2\beta_5^{m \wedge n},
\end{align*}
where $C_2>0$, $\beta_5\in(0,1)$ are constants independent of $i,j,\eps$.

Moreover,
\begin{align*}
\Bigl\|P_mA &T(\psi_{2,\eps,-}^{-1})P_n T(\psi_{2,\eps,-}^{-1})P_n-P_mA T(\psi_{2,\eps,-}^{-1})T(\psi_{2,\eps,+}^{-1})P_n\Bigr\|_1\\
&=\|P_m A T(\psi_{2,\eps,-}^{-1})(P_n-I)T(\psi_{2,\eps,+}^{-1})P_n\|_1\\
&\leq \|P_m A T(\psi_{2,\eps,-}^{-1})(P_n-I)\|_1\cdot\|T(\psi_{2,\eps,+}^{-1})P_n\|_{\oo}\\
&\leq C_3 \beta_6^{n},
\end{align*}
and
\begin{align*}
\Bigl\|P_mAP_n &T(\psi_{2,\eps,-}^{-1})P_n T(\psi_{2,\eps,-}^{-1})P_n-P_mA T(\psi_{2,\eps,-}^{-1})P_nT(\psi_{2,\eps,+}^{-1})P_n\Bigr\|_1\\
&=\|P_m A (P_n-I)T(\psi_{2,\eps,-}^{-1})P_nT(\psi_{2,\eps,+}^{-1})P_n\|_1\\
&\leq \|P_m A(P_n-I)\|_1\cdot\| T(\psi_{2,\eps,-}^{-1})P_nT(\psi_{2,\eps,+}^{-1})P_n\|_{\oo}\\
&\leq  C_3 \beta_6^{n},
\end{align*}
where  $C_3>0$, $\beta_6\in(0,1)$ are  constants independent of $m,n,\eps$.

We consider the following cases:
\begin{enumerate}
\item[I.] $I-S$ is invertible,
\item[II.] $I-S$ is not invertible.
\end{enumerate}
\emph{If Case I occurs}, following arguments similar to those of Section \ref{sec:pf32},
\begin{align}
\bigl|\det(I-S_{m,n})-\det(I-S)\bigr|\leq C_4\beta_7^{m \wedge n},\label{dttt}
\end{align}
where $C_4>0$, $\beta_7\in(0,1)$  are constants independent of $m,n,\eps$.
\emph{If Case II occurs}, by considering the Riesz projection and following similar arguments, 
\begin{align}
\bigl|\det(I-S_{m,n})-\det(I-S)\bigr|\leq C_5\beta_8^{m \wedge n},\label{ddtt}
\end{align}
where $C_5>0$, $\beta_8\in(0,1)$ are constants independent of $m,n,\eps$.

Letting $\eps\to 0$ and using analyticity (as in Section \ref{sec:pf32})
subject to \eqref{eq:ass5}, we have
\begin{equation*}
\La:=\lim_{m,n\to\oo}\langle\si_e\si_f \rangle^2=\det(I-S)\big|_{\epsilon=0},
\end{equation*}
by \eqref{dtt}, \eqref{dttt}, \eqref{ddtt} 
and the fact that $G(\psi_i)=1$ 
(the last follows by Lemma \ref{lem:lem102} and \eqref{eq:wi}). Moreover,
\eqref{dr} holds.

(b)
The above argument applies also when $m$ is fixed and $n\to\oo$. 
In this case, we replace $S$ in \eqref{ds} by
\begin{align*}
S_m&=\begin{pmatrix}
P_mH(\psi_{1,\eps})H(\wt{\psi}_{1,\eps,-}^{-1})P_mT_m(\psi_{1,\eps,+}^{-1})
&-A_{m,\oo}T(\psi_{2,\eps,-}^{-1})T(\psi_{2,\eps,+}^{-1})\\
-B_{\oo,m}T_m(\psi_{1,\eps,-}^{-1})T_m(\psi_{1,\eps,+}^{-1})
&H(\psi_{2,\eps})H(\wt{\psi}_{2,\eps,-}^{-1})T(\psi_{2,\eps,-}^{-1})
\end{pmatrix},
\end{align*}
where $A_{m,\oo}$, $B_{\oo,m}$ are matrices obtained from $A_{m,n}$, $B_{n,m}$ by letting $n\to\oo$. 
Now, $\det(I-S_m)$ exists since $S_m$ is a trace-class operator, and 
\begin{equation}
\lim_{n\to\oo}\langle \si_e\si_f \rangle^2=\det(I-S_m)\big|_{\epsilon=0}, \qq m \ge 0,\label{nii}
\end{equation}
as above.
We claim that there exists $C>0$, $\alpha\in(0,1)$, independent of $m,n$, such that
\begin{equation}
\left|\langle\si_e\si_f\rangle^2-\lim_{n\to\oo}\langle\si_e\si_f\rangle^2\right|\leq C\alpha^n,
\qq m,n \ge 0.
\label{edd}
\end{equation}
To show \eqref{edd}, first, following the same computations as above, we obtain
\begin{equation*}
\|S_{m,n}-S_m\|_1\leq C_7\beta_9^n,
\end{equation*}
for constants $C_7>0$, $\beta_9\in(0,1)$ independent of $m,n,\epsilon$. 

We consider the following cases:
\begin{enumerate}
\item[I.] $I-S$ is invertible,
\item[II.] $I-S$ is not invertible.
\end{enumerate}
\emph{In Case I}, for sufficiently large $m$, $I-S_m$ is also invertible. 
Following the  arguments of \cite[pp.\ 115--116]{ggk}, we obtain
\begin{equation*}
\bigl|\det(I-S_{m,n})-\det(I-S_m)\bigr|\leq C_8\beta_{10}^n,
\end{equation*}
for $C_8>0$, $\beta_{10}\in(0,1)$ independent of $m,n,\epsilon$.

\emph{In Case II}, the point $1$ is an isolated eigenvalue of finite type for $S$. 
Let $P$ be the corresponding Riesz projection, and put $H_1=\mathrm{Im}\, P$, 
$H_2=\mathrm{Ker}\, P$, so that $H_1$ is finite dimensional. With respect to the decomposition,
we have
\begin{equation*}
S_{m,n}=\begin{pmatrix}
K_{11}^{(m,n)}&K_{12}^{(m,n)}\\K_{21}^{(m,n)}&K_{22}^{(m,n)}\end{pmatrix},
\qq 
S_{m}=\begin{pmatrix}
K_{11}^{(m)}&K_{12}^{(m)}\\K_{21}^{(m)}&K_{22}^{(m)}\end{pmatrix}.
\end{equation*}
We now follow the argument of \cite[pp.\ 115--116]{ggk} and the proof in Section \ref{sec:pf32}, 
to obtain \eqref{edd}.

Fix $m$, and note that $S_m\bigr|_{\eps=0}$ is an operator of trace class. 
By \cite[Thm 4.6]{ZL1}
(see  also \cite{GK69}) and \eqref{nii},
$\lim_{n\to\oo}\langle\si_e \si_f\rangle^2=\det(I-S_m)\bigr|_{\eps=0}$ is analytic in $a,b,c$ subject to
\begin{equation}\label{eq:subsq}
a,b,c>0,\q \sqrt{a}<\sqrt{b}+\sqrt{c},\q  \sqrt{b}<\sqrt{a}+\sqrt{c},\q\sqrt{c}<\sqrt{a}+\sqrt{b}.
\end{equation}
By Remark \ref{rem:subcrit}, $\lim_{n\to\oo}\langle\si_e \si_f\rangle^2=0$
when
\begin{equation}\label{eq:subsq2}
a,b,c>0,\q a^2<b^2+c^2,\q b^2<a^2+c^2, \q c^2<a^2+b^2.
\end{equation}
The set of $(a,b,c)$ satisfying \eqref{eq:subsq2} is a subset of
the (connected) subset of $\RR^3$ given by \eqref{eq:subsq}, and it follows by analyticity that
\begin{equation}
\lim_{n\to\oo}\langle\si_e\si_f\rangle^2=0,\qq m \ge 0,
\end{equation}
subject to \eqref{eq:subsq}.

By the arguments that led to \eqref{edd},  
there exists $C>0$, $\alpha\in(0,1)$, independent of $m,n$, such that,
subject to \eqref{eq:subsq},
\begin{equation}
 \left|\langle\si_e\si_f \rangle^2-\lim_{m\to\oo}\langle\si_e\si_f\rangle^2\right|
\leq C\alpha^m,\qq m,n \ge 0,\label{edd1}
\end{equation}
and, in addition, 
\begin{equation}
\lim_{m\to\oo}\langle\si_e\si_f\rangle^2=0,\qq n \ge 0,\\
\label{nn}
\end{equation}
We combine \eqref{edd}--\eqref{nn} to obtain
\begin{equation*}
\langle\si_e\si_f\rangle^2\leq C\alpha^{m\vee n},
\end{equation*}
which implies \eqref{eds} with amended $C$, $\al$.

\end{proof}

\section{Periodic 1-2 models}\label{sec:periodic}

\subsection{Two-edge correlation for periodic models}
Some of the previous results may be extended
to certain periodic models, as explained next. Since each edge of $\HH$ touches exactly one white vertex,
a 1-2 model may be specified by assigning a parameter-vector $(a_w,b_w,c_w)$ to
each \emph{white} vertex  $w$.  For $k,l\in\NN$,
we call the ensuing model $ k\times l$ \emph{periodic} if 
$$
(a_w,b_w,c_w) = (a_v,b_v,c_v), \qq v=\tau_1^k\tau_2^l w,
$$
where the maps $\tau_i$ are illustrated in Figure \ref{fig:hex0}. 

By the techniques of Sections \ref{sec:morepf}, 
if the parameter-vectors of a periodic model are such that the associated spectral curve 
does not intersect the unit torus, then  the entries of the corresponding inverse Kasteleyn matrix 
$K^{-1}(v,w)$ converge to 0 exponentially as $|v-w|\to\oo$. Following the procedure 
of Section \ref{sec:eecad}, we obtain the following (in which the notation of Theorem \ref{thm:main4} has been
adopted).

\begin{theorem}\label{thm:main5}
Let $k,l \ge 1$. Assume the  1-2 model is 
$k\times l$ periodic, and the spectral curve does not intersect the unit torus.
\begin{letlist}
\item The limit $\La:= \lim_{m,n\to\oo}\langle\si_e\si_f\rangle^2$ exists, and
\begin{equation}
\left|\langle\si_e\si_f \rangle^2-\La\right|
\leq C\alpha^{m \wedge n},\label{dr}
\end{equation}
where $C>0$, $\alpha\in(0,1)$ are constants independent of $m$, $n$.
If $m=0$, we have 
\begin{equation}
\left|\langle\si_e\si_f \rangle^2-\La\right|
\leq C\alpha^{n}.\label{dr}
\end{equation} 
\item If, in addition,  $\lim_{|e-f|\rightarrow\infty}\langle\sigma_e\sigma_f\rangle^2=0$,
 then $C>0$, $\al\in(0,1)$ may be chosen such that
\begin{equation}
|\langle \si_e\si_f \rangle|\leq C\alpha^{|e-f|}.\label{eds}
\end{equation}
\end{letlist}
\end{theorem}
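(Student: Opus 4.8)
The plan is to run the dimer--Pfaffian machinery of Sections~\ref{sec:morepf}--\ref{sec:pf32} and~\ref{sec:eecad}, the only structural change being that the fundamental domain of the associated decorated dimer model is now a $k\times l$ block of basic cells rather than a single one. First I would record that the correspondence of Section~\ref{sec:dimer} and \cite{ZL2} between \ot configurations and dimer configurations on the decorated graph carries over verbatim to the periodic case: the modified Kasteleyn matrix $K_1(z,w)$ and its characteristic polynomial $P(z,w)=\det K_1(z,w)$ are built over the enlarged fundamental domain, and the hypothesis that $\{P(z,w)=0\}$ misses $\TT^2$ plays the rôle of Proposition~\ref{prop:Pzero}. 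Since $[K_1^{-1}(z,w)]_{i,j}=Q_{i,j}(z,w)/P(z,w)$ as in~\eqref{k1i} with $Q_{i,j}$ a Laurent polynomial, the entries of the inverse Kasteleyn matrix $K^{-1}(v,w)$, obtained from the double Fourier integral~\eqref{kiuv}, decay exponentially in $|v-w|$ whenever the spectral curve avoids the unit torus; this is the estimate quoted just before the statement, and it is the only new input.

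Granted this, part~(a) follows the proof of Theorem~\ref{thm:main4}(a). Exactly as in the derivations of~\eqref{eq:236} and~\eqref{eq:345}, the product formula of \cite[Thm 0.1]{ZL2} for dimer cylinder events together with Lemmas~\ref{pfc}--\ref{lem:pf2} gives
\begin{equation*}
\langle\si_e\si_f\rangle^2=\det\begin{pmatrix} T_m(\psi_1)& A_{m,n}\\ B_{n,m}& T_n(\psi_2)\end{pmatrix},
\end{equation*}
where each $\psi_i$ is now a matrix-valued symbol of the form~\eqref{psz} in $A_r\cap K_r$ for a suitable $r$ depending on the period, smooth on the unit circle with exponentially decaying Fourier coefficients, and $A_{m,n}$, $B_{n,m}$ have entries bounded by $C\beta_1^{i+j}$. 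The argument of Section~\ref{sec:eecad} then applies essentially word for word: after a perturbation $\psi_{i,\eps}=\psi_i+\eps\phi_i$ making $T(\wt\psi_{i,\eps})$ invertible and admitting a factorization as in~\eqref{ppm}, one forms the identity~\eqref{dt}--\eqref{dtt}, bounds $\|S_{m,n}-S\|_1\le C\alpha^{m\wedge n}$ using the trace-norm estimates of Section~\ref{sec:pf32}, disposes of the cases $I-S$ invertible / not invertible via the Gohberg--Krein determinant perturbation bound and a Riesz-projection reduction, and lets $\eps\to0$ by analyticity. This yields $\La=\det(I-S)|_{\eps=0}$ and $|\langle\si_e\si_f\rangle^2-\La|\le C\alpha^{m\wedge n}$. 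When $m=0$ the off-diagonal blocks are absent and the first diagonal block trivializes, so the representation reduces to $\det T_n(\psi_2)$ up to a rank-bounded endpoint correction with exponentially small entries, and Widom's theorem in the quantitative form of Theorem~\ref{ms8} gives directly $|\langle\si_e\si_f\rangle^2-\La|\le C\alpha^{n}$.

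For part~(b) the extra hypothesis forces $\La=0$, since $|e-f|\to\oo$ precisely when $m\vee n\to\oo$. As in the proof of Theorem~\ref{thm:main4}(b) I would fix $m$ and rerun the argument with $n\to\oo$ only, replacing $S$ in~\eqref{ds} by the operator $S_m$ in which the first block is kept truncated; this shows $\La_m:=\lim_{n\to\oo}\langle\si_e\si_f\rangle^2$ exists with $|\langle\si_e\si_f\rangle^2-\La_m|\le C\alpha^{n}$, $C,\alpha$ independent of $m,n$, and symmetrically $\La^n:=\lim_{m\to\oo}\langle\si_e\si_f\rangle^2$ exists with $|\langle\si_e\si_f\rangle^2-\La^n|\le C\alpha^{m}$. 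Holding $m$ fixed and letting $n\to\oo$ forces $|e-f|\to\oo$, so the hypothesis gives $\La_m=0$ for every $m$, and likewise $\La^n=0$ for every $n$. Hence $|\langle\si_e\si_f\rangle^2|\le C\alpha^{n}$ and $|\langle\si_e\si_f\rangle^2|\le C\alpha^{m}$ simultaneously, whence $|\langle\si_e\si_f\rangle^2|\le C\alpha^{m\vee n}$, and \eqref{eds} follows since $m\vee n\ge c|e-f|$ for some constant $c>0$ (after amending $C$, $\alpha$).

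I expect the main obstacle to be the identity $G(\psi_i)=1$, equivalently $\det\psi_i(z)=1$ for $|z|=1$, which in the homogeneous case is Lemma~\ref{lem:lem102} and is proved there by lattice-specific parity and homology considerations after reversing the orientation of two weight-$c$ bisector edges. In the periodic setting one must re-examine that argument: reverse the orientations of the bisector edges of the path lying in one fundamental domain, observe that $\det\psi_i(z)$ is again a limit of ratios of Kasteleyn determinants differing only through those reversals, and check that any essential cycle of the decorated torus crossing an odd number of the reversed edges also crosses the relevant homology class an odd number of times, so that the ratio equals $+1$ (the sign being pinned down by nonnegativity at $z=1$, exactly as in Lemma~\ref{lem:lem102}). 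A secondary technical point, already present in Section~\ref{sec:pf32}, is that the exponential rates in the perturbed identities can be taken uniform in $\eps$ for $|\eps|$ small, which is what permits passage to $\eps=0$ without losing the rate.
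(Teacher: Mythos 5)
Your proposal is correct and follows essentially the same route as the paper, which itself only sketches this result by referring back to the dimer--Pfaffian, block-Toeplitz, and trace-class perturbation machinery of Sections~\ref{sec:morepf}--\ref{sec:eecad}, with the exponential decay of the periodic inverse Kasteleyn entries as the one new input. Your explicit flagging of the normalization $G(\psi_i)=1$ (Lemma~\ref{lem:lem102}) as the point needing re-verification in the periodic setting is a detail the paper glosses over, but your proposed homology/parity argument for it is the right one.
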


\subsection{1-2 and Ising models with period $k \times 1$}\label{sec:lis2}

We discuss a special case of periodic 1-2 models,
namely, models with period $k\times 1$. 
By the forthcoming Theorem \ref{thm:periodic}, the spectral curves of such models
can intersect the unit torus only at real points. 
The conclusions of Theorem \ref{thm:main5} follow whenever the corresponding characteristic polynomial 
$P(z,w)$ satisfies $P(\pm1,\pm1)\neq 0$.  Similar arguments are valid for periodic Ising models,
as exemplified in the forthcoming Example \ref{ex:perI}, which illuminates the differences between
the assumptions of the current paper and those of \cite{Lis}.

Let $k \ge 2$ and consider a $k\times 1$ periodic 1-2 model.
Let $\HH_{\Delta}$ be as in Section \ref{sec:dimer}, and let $\HH_{\Delta,k,1}$ be the quotient graph of $\HH_{\Delta}$ under the weight preserving action of $\ZZ^2$. 
Note that $\HH_{\Delta,k,1}$ is a finite graph that can be embedded in a torus.
We can divide $\HH_{\Delta,k,1}$ into $k$ parts, each of which is bounded by a 
quadrilateral region enclosing a NE/SW edge of the original lattice
$\HH$, see Figure \ref{fig:1kd}. 

\begin{figure}[htbp]
\centerline{\includegraphics*[width=0.6\hsize]{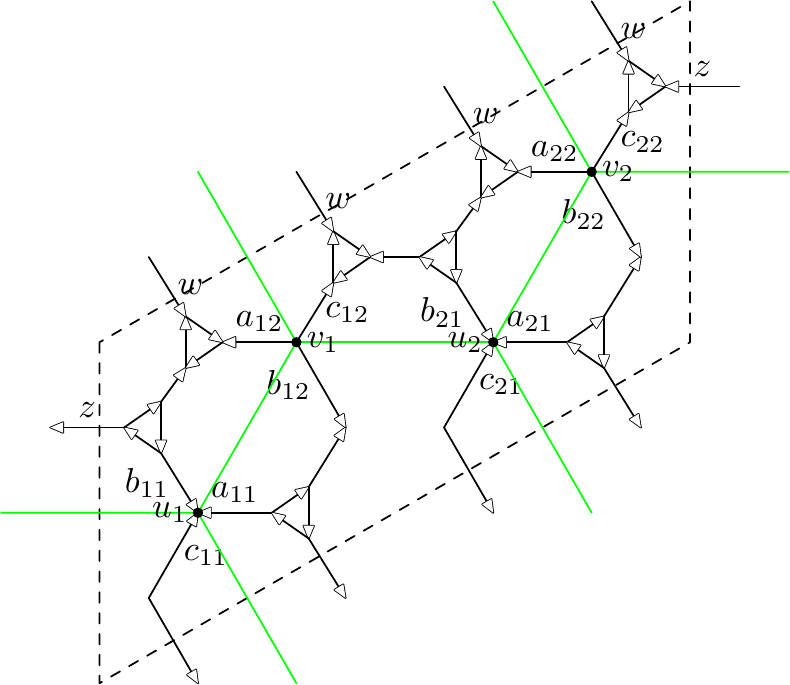}}
   \caption{The decorated graph $\HH_{\Delta,2,1}$; the hexagonal lattice $\HH$ is represented by green lines.}
   \label{fig:1kd}
\end{figure}

There are two vertices of $\HH$ lying in the $i$th such quadrilateral region
(see Figure \ref{fig:12con}), and to these
we assign local weights $(a_{i1},b_{i1},c_{i1})$ and $(a_{i2},b_{i2},c_{i2})$, respectively. 
As in Section \ref{ssec:spc},
we derive the modified weighted adjacency matrix $K(z,w)$ of $\HH_{\De,k,1}$, with 
characteristic polynomial $P(z,w)$. 

\begin{theorem}\label{thm:periodic}
The only possible intersection of the spectral curve $\{(z,w):P(z,w)=0\}$ 
with the unit torus 
$\TT^2=\{(z,w):|z|=1,\,|w|=1\}$ is a single real point.
\end{theorem}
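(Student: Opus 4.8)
The plan is to reduce the statement to the claim that the real-analytic function $(z,w)\mapsto P(z,w)=\det K(z,w)$ has at most one zero on the unit torus $\TT^2$; the reality of such a point then comes for free.

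\emph{Symmetries and reductions.} First I would record that, because $K(z,w)$ is anti-symmetric up to inversion of the spectral parameters, namely $K(z,w)^{\mathsf T}=-K(z^{-1},w^{-1})$, and $\HH_{\Delta,k,1}$ has an even number $N$ of vertices, one has $P(z,w)=(-1)^{N}P(z^{-1},w^{-1})=P(z^{-1},w^{-1})$. Since $K$ has real entries at real $(z,w)$, the Laurent polynomial $P$ has real coefficients, so for $(z,w)\in\TT^2$ we get $\overline{P(z,w)}=P(\bar z,\bar w)=P(z^{-1},w^{-1})=P(z,w)$; thus $P$ is real on $\TT^2$. At each of the four real points $(z,w)\in\{\pm1\}^2$ the matrix $K(z,w)$ is real and anti-symmetric, so $P(z,w)=\bigl(\Pf K(z,w)\bigr)^2\ge 0$; non-negativity of $P$ on all of $\TT^2$ then follows as in \cite[Lemma 3.2]{ZL2}, using that $iK(z,w)$ is Hermitian for $(z,w)\in\TT^2$ together with a continuity argument along the torus. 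Finally, the involution $\iota\colon(z,w)\mapsto(z^{-1},w^{-1})$ maps $\TT^2$ to itself, preserves the zero set of $P$, and sends $(z_0,w_0)$ to $(\bar z_0,\bar w_0)$; hence if $P$ has at most one zero on $\TT^2$, that zero is fixed by $\iota$ and is therefore real, which is exactly the assertion. It remains to prove the ``at most one zero'' statement.

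\emph{Transfer matrices in the long direction.} Here I would use the hypothesis of period $1$ in one lattice direction. Decompose $\HH_{\Delta,k,1}$ into the $k$ quadrilateral regions $Q_1,\dots,Q_k$ of Figure \ref{fig:1kd}, arranged cyclically along the homology cycle carrying the parameter $z$, and observe that the cycle carrying $w$ is confined to a single strip, so it crosses only boundedly many edges and $P$ has \emph{bounded} degree in the cross-variable $w+w^{-1}$ regardless of $k$. Schur-complementing away the vertices interior to each $Q_i$ rewrites $\det K(z,w)$, up to an explicit non-vanishing factor, as $\det\!\bigl((z+z^{-1})\mathrm{Id}-M(w)\bigr)$, where $M(w)=T_1(w)\cdots T_k(w)$ is built from the transfer matrices $T_i$ across the $Q_i$. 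Each $T_i$ preserves the bilinear form encoded by the anti-symmetric entries of $K$, so $M(w)$ does too and hence its spectrum is stable under $\lambda\mapsto\lambda^{-1}$. Consequently a zero $(z_0,w_0)\in\TT^2$ of $P$ amounts to $z_0+z_0^{-1}\in\{2\cos\theta:\theta\in\RR\}$ being an eigenvalue of $M(w_0)$ with $|w_0|=1$.

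\emph{Conclusion and the main difficulty.} For fixed $w=e^{i\phi}\in\TT$, $\theta\mapsto P(e^{i\theta},e^{i\phi})$ is a real trigonometric polynomial, and I would show it is strictly positive unless $\phi\in\{0,\pi\}$, in which case it has a single zero. The mechanism is the Harnack one: the reciprocal-spectrum structure of the $z$-transfer, the reality and non-negativity of $P$ on $\TT^2$ from the first step, and the bounded degree of $P$ in $w+w^{-1}$ from the period-$1$ hypothesis together force $\min_\theta P(e^{i\theta},e^{i\phi})$ to be strictly monotone in $\phi$ on $[0,\pi]$, hence to vanish at most once. Establishing this monotonicity — equivalently, ruling out a continuum of intersection points, or several isolated intersection points off the real locus — is the step I expect to be the main obstacle: in the homogeneous case of \cite[Lemma 3.2]{ZL2} it is disposed of by the explicit formula for $f(a,b,c;w,z)$, whereas here one must argue structurally from Kasteleyn positivity and the transfer-matrix form. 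Granting it, $P$ has at most one zero on $\TT^2$, and the theorem follows.
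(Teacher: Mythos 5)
Your sketch correctly identifies the overall shape of the argument --- $P$ is real on $\TT^2$ and one must exclude zeros at non-real $(z,w)$ --- but the decisive step is exactly the one you defer ("Granting it, \dots"), and nothing in the proposal substitutes for it. The paper's proof is devoted almost entirely to that step. It expands $\det K(z,w)$ over double-dimer configurations and splits the sum according to whether the $z$-edges lie on an essential cycle (giving a term $P_1=\tfrac1z\prod_i[A_iw+B_i]+z\prod_i[\tfrac{A_i}{w}+B_i]$, linear in $z^{\pm1}$) or are absent/doubled (giving a cyclic product of explicit local block weights $Q_i^{00},Q_i^{02},Q_i^{20},Q_i^{22}$). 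Strict positivity of the local parameters shows the cross terms $F(w)$ are non-negative on $|w|=1$, so $z[P-F]$ is an explicit quadratic in $z$ whose two roots have product of modulus one; a direct factorization shows its discriminant $C_1^2-4|C_0|^2$ is non-negative and vanishes only for real $w$, which forces one root strictly inside and one strictly outside the unit circle whenever $w\notin\{\pm1\}$. Your proposed mechanism ("Kasteleyn positivity and the transfer-matrix form force $\min_\theta P(e^{i\theta},e^{i\phi})$ to be strictly monotone in $\phi$") is not established and is not what the paper proves; no monotonicity statement appears, only the discriminant inequality. Note also that in the paper's logic the non-negativity of $P$ on all of $\TT^2$ is a \emph{consequence} of first ruling out zeros at non-real $w$ (plus positivity at one of the four points $(\pm1,\pm1)$ and continuity), whereas you use it as an input; Hermiticity of $iK(z,w)$ gives reality of $P$ on $\TT^2$ but not its sign.

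Some of your structural assertions are also inconsistent with the actual form of $P$. The degree bookkeeping is reversed: $P$ has degree one in $z$ (an essential cycle crosses the $z$-edges with total winding $\pm1$, and doubled $z$-edges contribute $z\cdot z^{-1}$), while the degree in $w$ grows linearly in $k$ because every one of the $k$ blocks carries its own $w^{\pm1}$-weighted edges --- see the factors $A_iw+B_i$ and the $w$-dependence of each $Q_i^{\ast\ast}$. Consequently a reduction to $\det\bigl((z+z^{-1})\mathrm{Id}-M(w)\bigr)$ cannot reproduce a function that is only linear in $z+z^{-1}$ unless $M(w)$ is scalar, so the transfer-matrix form as you state it does not match the structure the proof actually exploits. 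The symmetry preamble ($P(z,w)=P(z^{-1},w^{-1})$, reality on $\TT^2$, and the observation that a unique zero fixed by conjugation must be real) is fine, but it only localizes the problem; the content of the theorem is the explicit inequality that your sketch leaves open.
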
 

\begin{proof}
Note that a double dimer configuration is a union of cycles and 
doubled edges with the property that each vertex is incident to exactly two present 
edges. An (unoriented) $z$-\emph{edge} is an edge which has weight $z$ or 
$z^{-1}$ when oriented. 
Each double-dimer configuration of the determinant $P(z,w)$ falls 
into exactly one of the following two cases:
\begin{numlist}
\item it occupies each $z$-edge exactly once, i.e., each $z$-edge is in a cycle of the double dimer configuration.
\item each $z$-edge is either unoccupied or occupied exactly twice, i.e., 
each $z$-edge is either absent or a doubled edge in the double dimer configuration.
\end{numlist}

Each $k\times 1$ fundamental domain is comprised of $k$ $1\times 1$ blocks. For $1\leq i\leq n$, let $u_i,v_i$ be the two vertices of the hexagonal lattice lying in the $i$th block, see Figure \ref{fig:1kd}.

Each configuration in Case 1 consists of a single essential cycle of even length, together with some 
doubled edges.  The partition function of configurations in Case 1 is 
\begin{equation*}
P_1=\frac{1}{z}\prod_{i=1}^{k}\left[A_iw+B_i\right]+z\prod_{i=1}^{k}\left[\frac{A_i}{w}+B_i\right],
\end{equation*}
where
\begin{align*}
A_i=&-a_{i1}^2a_{i2}b_{i2} - a_{i1}a_{i2}^2b_{i1} - a_{i1}b_{i1}b_{i2}^2 + a_{i1}b_{i1}c_{i2}^2 - a_{i2}b_{i1}^2b_{i2} + a_{i2}b_{i2}c_{i1}^2,\\
B_i=&-a_{i1}^2a_{i2}c_{i2} - a_{i1}a_{i2}^2c_{i1} + a_{i1}b_{i2}^2c_{i1} - a_{i1}c_{i1}c_{i2}^2 + a_{i2}b_{i1}^2c_{i2} - a_{i2}c_{i1}^2c_{i2}.
 \end{align*}

Configurations in Case 2 depend on configurations of each $1\times 1$ block, which are 
determined by configurations of boundary edges. Let $Q_i^{00}$ (\resp, $Q_i^{22}$) 
denote the partition function at the $i$th block when both its $z$-edges are unoccupied 
(\resp, occupied). Let $Q_i^{20}$ (\resp, $Q_i^{02}$) denote the partition function at the $i$th 
block when its left (\resp, right) $z$-edge is occupied twice, while the right 
(\resp, left) edge is unoccupied. Then we have
\begin{align*}
Q_i^{00}&=\left(a_{i1}a_{i2} - \frac{b_{i1}c_{i2}}{w} + b_{i1}b_{i2} + c_{i1}c_{i2} - b_{i2}c_{i1}w\right)\\
&\hskip3cm \times \left(a_{i1}a_{i2} -\frac{ b_{i2}c_{i1} }{w}+ b_{i1}b_{i2} + c_{i1}c_{i2} - b_{i1}c_{i2}w\right),\\
Q_i^{02}&=W\left[a_{i2}^2b_{i1}c_{i1} + a_{i1}c_{i2}a_{i2}b_{i1} + a_{i1}b_{i2}a_{i2}c_{i1} + b_{i2}c_{i2}\left(b_{i1}^2 - b_{i1}c_{i1}\left(w+\frac{1}{w}\right) + c_{i1}^2\right)\right],\\
Q_i^{20}&=W\left[a_{i1}^2b_{i2}c_{i2} + a_{i2}c_{i1}a_{i1}b_{i2} + a_{i2}b_{i1}a_{i1}c_{i2} + b_{i1}c_{i1}\left(b_{i2}^2 - b_{i2}c_{i2}\left(w+\frac{1}{w}\right) +c_{i2}^2\right)\right],\\
Q_i^{22}&=-b_{i1}b_{i2}c_{i1}c_{i2}\left(w-\frac{1}{w}\right)^2+(a_{i1}b_{i2}+a_{i2}b_{i1})^2+(a_{i1}c_{i2}+a_{i2}c_{i1})^2\\
&\hskip3cm +\left(a_{i2} ^2 b_{i1} c_{i1} + a_{i1}^2  b_{i2} c_{i2} + a_{i1} a_{i2} b_{i1} c_{i2} + a_{i1} a_{i2} b_{i2} c_{i1}\right)\left(w+\frac{1}{w}\right),
 \end{align*}
where $W=(w-w^{-1})$.

Let $z_i$ be the $z$-edge connecting the $i$th $1\times 1$ 
block 
and the $[(i+1)\mod k]$th $1\times 1$ block, and let 
$t_i\in\{0,1,2\}$ denote the occupation time of $z_i$ by a given double dimer
configuration, which is to say that
$$
t_i =\begin{cases} 0 &\text{if $z_i$ is absent},\\
2 &\text{if $z_i$ is a doubled edge},\\ 
1 &\text{if $z_i$ is in a cycle}.
\end{cases}
$$
The partition function in Case 2 is 
\begin{equation*}
P_2=\sum_{t_1,\dots,t_k\in\{0,2\}}\prod_{i=1}^{i}Q_i^{t_{i-1}t_{i}}.
\end{equation*}
Let $w=e^{i\phi}$, so that $Q_i^{00},Q_i^{22}\geq 0$. 
By periodicity, in each configuration, the number of $Q^{02}$ blocks equals 
the number of $Q^{20}$ blocks. Therefore in all terms in $P_{2}$ where $\sin\phi$ 
appears, which are exactly the terms where $Q^{02}$ and $Q^{20}$ appear, $\sin\phi$ 
has even degree. Moreover, given that all the local weights are strictly positive,  
each term in the expansion of $P_2$ with $\sin\phi$ is non-negative. Therefore,
\begin{align*}
P(z,w)=P_1+P_2
=P_1+\prod_{i=1}^{k}Q_i^{00}+\prod_{i=1}^{k}Q_i^{22}+F(w),
\end{align*}
where $F(w)\geq 0$ is the sum of all terms in $P_2$ in which $Q^{02}$ and $Q^{20}$ appear at some 
$1\times 1$ blocks. Let $G(z,w)=z[P(z,w)-F(w)]$. For given $w$, $G(z,w)$ is a quadratic 
polynomial in $z$. 
Let
\begin{align*}
C_0=\prod_{i=1}^{k}\left[A_iw+B_i\right],\qq
C_1=\prod_{i=1}^{k}Q_i^{00}+\prod_{i=1}^{k}Q_i^{22}.
\end{align*}
The roots of $G(\cdot, w)$ are
\begin{equation}
z^\pm=\frac{C_1\pm\sqrt{C_1^2-4|C_0|^2}}{2\ol{C_0}}.\label{z12}
\end{equation}
Let $w=e^{i\phi}$, and note that
\begin{equation*}
C_1^2-4|C_0|^2 \geq 4\left(\prod_{i=1}^{k}[Q_i^{00}Q_i^{22}]-\prod_{i=1}^{k}\left[A_i^2+B_i^2+2A_iB_i\cos\phi\right]\right).
\end{equation*}
Moreover, 
\begin{align*}
&Q_i^{00}Q_i^{22}-\left(A_i^2+B_i^2+2A_iB_i\cos\phi\right)\\
&\q=-\left(w-\frac{1}{w}\right)^2\\
&\qq\times\left(a_{i1}^2b_{i2} c_{i2}  + a_{i2} c_{i1} a_{i1} b_{i2}  + a_{i2} b_{i1} a_{i1} c_{i2}+b_{i1} c_{i1}\left( b_{i2}^2+c_{i2}^2-2b_{i2}c_{i2}\cos\phi\right) \right)\\
&\qq\times\left(a_{i2}^2  b_{i1} c_{i1} + a_{i1} c_{i2} a_{i2} b_{i1} + a_{i1} b_{i2} a_{i2} c_{i1} + b_{i2} c_{i2} (b_{i1}^2  -  2b_{i1} c_{i1} \cos\phi  -c_{i1}^2 )\right).
\end{align*}

Let $a_i,b_i,c_i> 0$. Then $C_1^2-4|C_0|^2\geq 0$ with equality only if $w$ is real. 
If $C_1^2-4|C_0|^2>0$, then $|z^\pm|\neq 1$ and $G(\cdot,w)$ has no zeros on the unit circle. 
Hence $G(z,w)$ has no zeros on $\TT\times (\TT\setminus\{\pm 1\})$.
Since there exists at least one pair $\theta,\tau\in\{0,1\}$ with 
$$
P\bigl((-1)^{\theta},(-1)^{\tau}\bigr)=\det K\bigl((-1)^{\theta},(-1)^{\tau}\bigr)>0,
$$
we have that $G(z,w)\geq 0$ on $\TT^2$, with equality  
only if $w$ is real. By \eqref{z12}, when $w$ is real and $C_1^2-4|C_0|^2=0$, then $z^+=z^-$ is real. 
Therefore, the only possible intersection 
of $P(z,w)$ with $\TT^2$ is a single real point.
\end{proof}

The above technique applies also to the spectral curve of the
Ising model (not necessarily ferromagnetic) with $k\times 1$ periodicity on the triangular lattice.
The details are omitted.

\begin{proposition}\label{mim}
Consider a periodic Ising model on the triangular lattice. To each edge $e$, 
associate a coupling constant $J_e\in \RR$. Assume the coupling constants are 
translation-invariant with period $k\times 1$ where $k\ge 1$. 
The only possible intersections of the spectral curve with the unit torus are real points.
\end{proposition}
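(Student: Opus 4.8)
The plan is to transcribe the proof of Theorem~\ref{thm:periodic} essentially word for word, the only change being the dimer model one begins from. First I would turn the periodic Ising model on the triangular lattice into a dimer model: the high-temperature expansion rewrites its partition function, up to the explicit constant $\prod_e\cosh J_e$, as $\sum_{\omega}\prod_{e\in\omega}\tanh J_e$ over even subgraphs $\omega$, and Fisher's vertex-splitting at the degree-$6$ vertices \cite{F61} turns this into the partition function of a dimer model on a planar, $\ZZ^2$-periodic decorated graph $G$ whose edges arising from edges of the triangular lattice carry the (real, possibly negative) weights $\tanh J_e$, all decoration edges carrying weight $1$. Since the couplings are $k\times 1$ periodic, $G$ is invariant under $k\ZZ\times\ZZ$; let $G_{k,1}$ be the quotient torus graph, fix a clockwise-odd orientation, and let $K(z,w)$ be the modified Kasteleyn matrix, so that the spectral curve of the Ising model is by definition $\{(z,w):P(z,w)=0\}$ with $P(z,w)=\det K(z,w)$. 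This is exactly the situation of Section~\ref{sec:lis2}, with $G_{k,1}$ replacing $\HH_{\Delta,k,1}$.

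From here I would run the argument of Theorem~\ref{thm:periodic} unchanged. View $G_{k,1}$ as $k$ consecutive blocks joined cyclically by the $z$-edges $z_1,\dots,z_k$ (those whose Kasteleyn weight carries a factor $z^{\pm1}$). Each term in the expansion of the determinant $P(z,w)=\det K(z,w)$ is a double-dimer configuration, a union of cycles and doubled edges with every vertex of degree $2$; writing $t_i\in\{0,1,2\}$ for the number of times $z_i$ is used, the fact that the number of dimers meeting a block through its two $z$-edges is even gives $t_i\equiv t_{i+1}\pmod 2$, so all $t_i$ share a parity and the configuration is of exactly one of two types: all $t_i=1$ (a single essential cycle winding once in the $z$-direction, plus contractible loops and doubled edges confined to the blocks), or all $t_i\in\{0,2\}$ (all loops contractible, so the configuration factorizes over the blocks through the occupation data of its boundary edges). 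Correspondingly $P=P_1+P_2$, where the first term has the transfer form $P_1=z^{-1}\prod_{i=1}^{k}(A_iw+B_i)+z\prod_{i=1}^{k}(A_i/w+B_i)$ with $A_i,B_i\in\RR$ built from the $\tanh J_e$ of block $i$, and $P_2=\sum_{t_1,\dots,t_k\in\{0,2\}}\prod_{i=1}^{k}Q_i^{t_{i-1}t_i}$ with $Q_i^{t_{i-1}t_i}$ the partition function of block $i$ with prescribed occupations of its two $z$-edges.

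Specialising to $w=e^{i\phi}$ and setting $C_0=\prod_i(A_iw+B_i)$, $C_1=\prod_iQ_i^{00}+\prod_iQ_i^{22}$, and $F(w)=P_2-\prod_iQ_i^{00}-\prod_iQ_i^{22}$, one has $zP(z,w)=\ol{C_0}\,z^2+(C_1+F(w))z+C_0$, a genuine quadratic in $z$ whose roots satisfy $z^+z^-=C_0/\ol{C_0}$, hence $|z^+z^-|=1$. Exactly as in Theorem~\ref{thm:periodic}, the block identity writing $Q_i^{00}Q_i^{22}-|A_iw+B_i|^2$ as $-(w-w^{-1})^2$ times a product of sign-definite factors gives $Q_i^{00}Q_i^{22}\ge|A_iw+B_i|^2$ on $\TT$ with equality only if $\sin\phi=0$, and together with $Q_i^{00},Q_i^{22}\ge0$, $F(w)\ge0$, and the AM--GM inequality this yields $C_1+F(w)\ge C_1\ge 2|C_0|$ on $\TT$, with $C_1+F(w)=2|C_0|$ forcing $\sin\phi=0$. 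Consequently: if $\sin\phi\ne0$ the discriminant $(C_1+F(w))^2-4|C_0|^2$ is strictly positive, so the two roots have different moduli and neither lies on $\TT$; and if $w=\pm1$ the discriminant is still $\ge0$, and when it vanishes the double root $-(C_1+F(w))/(2\ol{C_0})$ is real of modulus $1$, hence $\pm1$. Therefore every point of the spectral curve lying on $\TT^2$ has $z,w\in\{1,-1\}$, which is the assertion.

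I expect the main difficulty to lie in the block-level positivity statements ($Q_i^{00},Q_i^{22},F(w)\ge0$ on $\TT$, and the $Q_i^{00}Q_i^{22}-|A_iw+B_i|^2$ identity), because the Fisher weights $\tanh J_e$ need not be positive, so one cannot argue monomial by monomial as one does for the strictly positive \ot weights. I would handle this by organising $Q_i^{00}$, $Q_i^{22}$, $F(w)$ and $Q_i^{00}Q_i^{22}-|A_iw+B_i|^2$ as sums of squares, or as products of sign-definite quadratic forms, in which each $\tanh J_e$ enters only through $\tanh^2J_e\ge0$ or through such forms --- the structure already present in Theorem~\ref{thm:periodic}; verifying that it survives for Fisher's decoration of the triangular lattice is a finite symbolic computation, best done by computer algebra as in Section~\ref{sec:lis2}. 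The remaining ingredients --- the parity dichotomy between the two configuration types, the transfer form of $P_1$, and disposing of the degenerate loci where $C_0$ or $C_1+F(w)$ vanishes identically --- are routine.
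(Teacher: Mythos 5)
The paper offers no details for this proposition --- it simply asserts that the technique of Theorem \ref{thm:periodic} carries over --- so your task was to supply them, and you have correctly identified that technique as the intended one. The problem is the dimer representation you chose. You pass to the high-temperature expansion on $\TT$ itself, so your decorated graph carries weights $\tanh J_e$, which are negative for antiferromagnetic couplings; you then observe, correctly, that every substantive inequality in the proof of Theorem \ref{thm:periodic} ($Q_i^{00},Q_i^{22}\ge 0$, $F(w)\ge 0$, and the sign-definite factorization of $Q_i^{00}Q_i^{22}-|A_iw+B_i|^2$) is obtained there by exploiting strict positivity of the local weights, and you defer the signed-weight analogue to an unperformed ``finite symbolic computation''. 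That computation is the entire content of the proposition: without it nothing is proved, and it is far from clear that the required sum-of-squares structure survives once the $\tanh J_e$ change sign. The route the paper actually intends (made explicit in Example \ref{ex:perI}) avoids the issue at the source: one takes the Fisher graph of the \emph{dual} hexagonal lattice $\HH$, whose non-triangle edges carry the weights $e^{2J_e}$. These are strictly positive for every real $J_e$, ferromagnetic or not, so the monomial-by-monomial positivity argument of Theorem \ref{thm:periodic} applies essentially unchanged --- which is exactly why the hypothesis ``$J_e\in\RR$'' rather than ``$J_e>0$'' is harmless.

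There is a second, purely combinatorial, obstruction to your construction. The reduction of $G(z,w)=z[P(z,w)-F(w)]$ to a quadratic in $z$, hence the identity $z^+z^-=C_0/\ol{C_0}$ and the discriminant analysis, requires exactly one $z$-edge joining consecutive blocks, so that $P$ has $z$-degree $1$. For the (decorated) hexagonal lattice one can route the block boundaries so as to cross a single edge. For the triangular lattice one cannot: any cut separating consecutive $1\times 1$ blocks must sever at least two of the three edge-directions of $\TT$ (equivalently, the shortest essential dual cycle in the short direction of the torus has length two), so after Fisher-splitting the degree-$6$ vertices there are two $z$-edges per block boundary and $\det K(z,w)$ genuinely acquires $z^{\pm 2}$ terms (already an even subgraph of the $1\times 1$ quotient using one horizontal and one diagonal edge has homology class $(2,1)$). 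Your expression for $P_1$ and the quadratic-roots step therefore do not apply to the graph you built. Both difficulties disappear if you instead run the block decomposition on the Fisher graph of the dual hexagonal lattice with weights $e^{2J_e}$, recompute the $A_i$, $B_i$, $Q_i^{tt'}$ there, and then repeat the proof of Theorem \ref{thm:periodic} verbatim.
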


\begin{example}\label{ex:perI}
Here is an example which explores the generality of the arguments of the current paper.
We start with an Ising model on the triangular lattice $\TT$, with edge interactions $J_e\in \RR$
that are periodic with period $2\times 1$. Let $\HH$ be the dual hexagonal lattice of $\TT$.
The corresponding Fisher graph $\FF$
is obtained from $\HH$ by replacing each vertex by a triangle. 
Each triangle edge is assigned weight $1$, and a non-triangle edge crossing an edge $e$ of $\TT$
has weight $e^{2J_e}$. 

The dimer model on $\FF$ with the above edge-weights corresponds to the Ising model on the triangular lattice.  Note that the spins of the Ising model are placed at centres
of the dodecagons of $\FF$. Two adjacent spins have the same state
(\resp, opposite states) if
and only if the corresponding non-triangle edge of $\FF$ separating the two dodecagons
are present (\resp, absent). 

See Figure \ref{fig:12fisher} for an illustration, where $a_1,b_1,c_1,a_2,b_2,c_2$ are 
the edge-weights $e^{2J_e}$.

\begin{figure}[htbp]
\centerline{\includegraphics*[width=0.3\hsize]{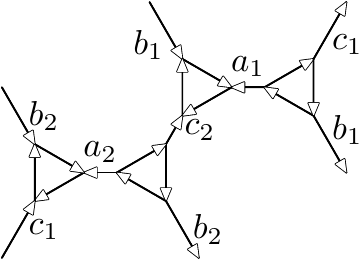}}
   \caption{The Fisher graph with $2\times 1$ periodic edge weights}
   \label{fig:12fisher}
\end{figure}

Given the clockwise-odd orientation of Figure \ref{fig:12fisher}, we can compute the 
characteristic polynomial $P(z,w)=\det K(z,w)$, where $K(z,w)$ is the modified 
weighted adjacency matrix whose rows and columns are indexed by 
vertices in the $2\times 1$ fundamental domain. 
By Corollary \ref{mim}, when $a_1,b_1,c_1,a_2,b_2,c_2>0$, the only possible intersection of $P(z,w)$ 
with the unit torus are real.

The function $P(z,w)$ may be calculated as in Section \ref{ssec:spc}, and it may  be checked that,
when $(a_1,b_1,c_1,a_2,b_2,c_2)=(1.1,0.9,0,0,0.5,0.5)$,  
\begin{equation}\label{eq:suff}
P(1,1)P(1,-1)P(-1,1)P(-1,-1)\neq 0.
\end{equation}
Let $(a_1,b_1,b_2,c_2)=(1.1,0.9,0.5,0.5)$ and assume that $c_1$, $a_2$ are positive and sufficiently small
that \eqref{eq:suff} continues to hold. By Proposition \ref{mim}, 
the spectral curve does not intersect the unit torus.
As in \cite{BdeT10,ckp00,KOS06}, 
the Ising free energy may be expressed in the form \eqref{fef}.
However,  when $(a_1,b_1,b_2,c_2)=(1.1,0.9,0.5,0.5)$ and $c_1$, $a_2$ are positive and sufficiently small, 
then neither the high-temperature  
nor the low-temperature condition of \cite{Lis} is satisfied.

Moreover, using the technique of \cite{ZL1}, the square of the spin--spin
 correlation may be expressed as the determinant of a block Toeplitz matrix. 
 By applying similar techniques as in 
 Sections \ref{sec:pf32}--\ref{sec:eecad}, we can obtain the convergence rate of the 
 spin--spin correlation of the Ising model of \eqref{thm:main5} whenever the spectral curve does 
 not intersect the unit torus.
\end{example}

\subsection{Harnack curve}

We present next another sufficient condition for the spectral curve 
$P(z,w)=0$ to intersect the unit torus at only real points. 
The exponential convergence 
rate for two-edge correlation functions follows by Theorem \ref{thm:main5}. 

Harnack curves were studied in \cite{GM00,MR01}. Simply speaking, a Harnack curve is the
 real part of a real algebraic curve $A$ (real zeros of a Laurent polynomial with real coefficients)  
 such that the map \eqref{logr} from $A$ to $\RR^2$ is at most two-to-one. It was proved 
 in \cite{KO06,KOS06} that the 
 spectral curve of any positive-weight, bi-periodic, planar, bipartite dimer model is a Harnack curve. 
 Using the combinatorial results of \cite{Dub11}, we infer that 
 the spectral curve of any ferromagnetic, bi-periodic, planar Ising model is also a Harnack curve. 
 If the Ising model is not ferromagnetic, there may exist a concrete counterexample in 
 which the spectral curve is not Harnack, \cite{RKp}. We give here a simple proof that, under 
 certain conditions, the assumption that the spectral curve is Harnack implies that its intersections 
 with the unit torus are necessarily real.

\begin{proposition}
Let $P(z,w)$ be a Laurent polynomial taking real values on the unit torus. 
If $A:=\{(z,w)\in\CC^2: P(z,w)=0\}$ is a Harnack curve, then $A$ can only intersect the unit torus 
$\TT^2$ at real points.
\end{proposition}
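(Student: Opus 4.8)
The plan is to locate $A\cap\TT^2$ inside the amoeba of $A$ and then exploit the rigidity of Harnack curves. Write $\mathrm{Log}(z,w)=(\log|z|,\log|w|)$ and let $\mathcal A=\mathrm{Log}(A\cap(\CC^{*})^{2})$ be the amoeba, so that $A\cap\TT^2=\mathrm{Log}^{-1}(0,0)\cap A$. First I would recall the Mikhalkin--Rull\-gård description of Harnack curves \cite{MR01,GM00}: since $A$ is Harnack, $\mathrm{Log}|_A$ is at most two-to-one, it is a local diffeomorphism at every point of $A$ outside the real locus $A_{\RR}:=A\cap(\RR^{*})^{2}$, and it carries $A_{\RR}$ bijectively onto $\partial\mathcal A$; consequently the critical points of $\mathrm{Log}|_A$ are exactly the points of $A_{\RR}$, which are real, and (from the two-to-one bound) $A\cap\TT^2$ is finite. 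Moreover the singular points of a Harnack curve are real, so if $q\in A\cap\TT^2$ is a singular point of $A$ there is nothing to prove. It thus remains to show that every \emph{smooth} point $q=(z_0,w_0)\in A\cap\TT^2$ is a critical point of $\mathrm{Log}|_A$.

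This is where the hypothesis that $P$ is real on $\TT^2$ enters, through an elementary observation. Writing $q=(e^{i\theta_0},e^{i\phi_0})$ and differentiating the real-valued function $(\theta,\phi)\mapsto P(e^{i\theta},e^{i\phi})$ at $(\theta_0,\phi_0)$, one finds that $iz_0\,\partial_zP(q)$ and $iw_0\,\partial_wP(q)$ are real; equivalently $z_0\,\partial_zP(q)$ and $w_0\,\partial_wP(q)$ both lie in $i\RR$, and they are not both zero since $q$ is a smooth point of $A$. At $q$ the tangent line is $T_qA=\{(\dot z,\dot w):\partial_zP(q)\dot z+\partial_wP(q)\dot w=0\}$, and $d(\mathrm{Log}|_A)_q$ sends $(\dot z,\dot w)\mapsto(\mathrm{Re}(\dot z/z_0),\mathrm{Re}(\dot w/w_0))$. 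A vector of the form $\dot z=iz_0 s$, $\dot w=iw_0 t$ with $s,t\in\RR$ automatically lies in $\ker d(\mathrm{Log}|_A)_q$, and it lies in $T_qA$ precisely when $s\,(z_0\partial_zP(q))+t\,(w_0\partial_wP(q))=0$; because the two coefficients are purely imaginary, this is a single real linear equation in $(s,t)$ and so has a nonzero real solution. That solution produces a nonzero kernel vector of $d(\mathrm{Log}|_A)_q$, whence $q$ is a critical point of $\mathrm{Log}|_A$, and therefore by the first paragraph a real point of $A$.

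Putting the steps together, each point of $A\cap\TT^2$ is either a (necessarily real) singular point of $A$, or a smooth point which the tangent-space computation shows to be a critical point of $\mathrm{Log}|_A$ and hence to lie in $A_{\RR}$; in both cases it is real. The routine ingredients are the differentiation that forces $z_0\partial_zP(q)$, $w_0\partial_wP(q)$ to be purely imaginary and the one-line kernel computation. I expect the main obstacle to be importing the Harnack structure theory at exactly the level needed here — in particular that the critical locus of $\mathrm{Log}|_A$ coincides with the real locus and that all singularities of a Harnack curve are real — and checking that these apply to the possibly reducible or singular curves allowed by the paper's definition of ``Harnack curve''; once that input is in place, the rest is a short exercise.
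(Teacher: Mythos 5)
Your argument is correct and is essentially the paper's proof: the computation showing that $z_0\partial_zP(q)$ and $w_0\partial_wP(q)$ are purely imaginary is exactly the paper's observation that the logarithmic Gauss map $\gamma_P(z,w)=\bigl(z\partial_zP,w\partial_wP\bigr)$ lands in $\RR P^1$ on $A\cap\TT^2$, and your tangent-space computation just unpacks by hand Mikhalkin's lemma identifying $\gamma_P^{-1}(\RR P^1)$ with the critical locus of $\Log|_A$, after which both proofs invoke the Harnack characterization $A\cap\RR^2=\gamma_P^{-1}(\RR P^1)$. Your extra care about singular points is a reasonable refinement the paper elides, but the route is the same.
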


\begin{proof}
Define the logarithmic Gaussian map $\gamma_P:A\rightarrow \CC P^1$ by
\begin{equation*}
\gamma_P(z,w)=\left(z\frac{\partial P}{\partial z},w\frac{\partial P}{\partial w}\right),
\end{equation*}
and also $\Log: A\rightarrow \RR^2$ by
\begin{equation}
\Log: (z,w)\mapsto (\log|z|,\log|w|)\label{logr}.
\end{equation}
By \cite[Lemma 5]{GM00}, $A$ is Harnack, whence the real zeros satisfy
$A\cap \RR^2=\gamma_{P}^{-1}(\RR P^1)$. By \cite[Lemma 3]{GM00}, 
$\gamma_P^{-1}(\RR P^1)$ consists of the singular points of the map $\Log$. 

Let $(z,w)=(e^{i\theta},e^{i\phi})\in A\cap\TT^2$. 
Since $P(z,w)$ is a Laurent polynomial, we have that 
$$
\frac{\partial P}{\partial \ol{z}}=0,\qq \frac{\partial P}{\partial \ol{w}}=0,
$$
and hence
\begin{equation*}
\gamma_P(z,w)=\left(i\frac{\partial P}{\partial \theta},i\frac{\partial P}{\partial \phi}\right).
\end{equation*}
Given that $P$ takes real values on $\TT^2$, $\gamma_P(z,w)\in \RR P^1$. Hence $A\cap \TT^2\subseteq \gamma_P^{-1}(\RR P^1)=A\cap \RR^2$. Therefore, any zero of $P(z,w)$ on $\TT^2$ is real. 
\end{proof}

\section*{Acknowledgements} 
This work was supported in part
by the Engineering and Physical Sciences Research Council under grant EP/I03372X/1. 
ZL's research was supported by the Simons Foundation grant $\#$351813 and 
National Science Foundation DMS-1608896. We thank the referee for
a detailed and useful report.

\providecommand{\bysame}{\leavevmode\hbox to3em{\hrulefill}\thinspace}
\providecommand{\MR}{\relax\ifhmode\unskip\space\fi MR }
\providecommand{\MRhref}[2]{%
  \href{http://www.ams.org/mathscinet-getitem?mr=#1}{#2}
}
\providecommand{\href}[2]{#2}


\end{document}